\newtheorem{theorem}{Theorem}[section]
\newtheorem{lemma}[theorem]{Lemma}
\newtheorem{corollary}[theorem]{Corollary}
\newtheorem{proposition}[theorem]{Proposition}
\theoremstyle{definition}
\newtheorem{definition}[theorem]{Definition}
\newtheorem{problem}[theorem]{Problem}
\newtheorem{remark}[theorem]{Remark}
\newtheorem{example}[theorem]{Example}
\newcommand{\F}{\mathbb{F}}
\newcommand{\PGL}{\operatorname{PGL}}
\newcommand{\GL}{\operatorname{GL}}
\newcommand{\AG}{\operatorname{AG}}
\newcommand{\PG}{\operatorname{PG}}
\newcommand{\Tr}{\operatorname{Tr}}
\newcommand\qbin[3]{\left[\begin{matrix} #1 \\ #2 \end{matrix} \right]_{#3}}
\title{Extremal Peisert-type graphs without the strict-EKR property}
\author{Sergey Goryainov}
\address{School of Mathematical Sciences, Hebei International Joint Research Center for Mathematics and Interdisciplinary Science\\Hebei Normal University, Shijiazhuang  050024, P.R. China}
\email{sergey.goryainov3@gmail.com}
\author{Chi Hoi Yip}
\address{Department of Mathematics \\ University of British Columbia \\ Vancouver  V6T 1Z2 \\ Canada}
\email{kyleyip@math.ubc.ca}
\keywords{Erd\H{o}s-Ko-Rado theorem, Peisert-type graph, maximum clique, affine polar graph, Hilton-Milner theorem}
\subjclass[2020]{05C25, 51E15, 11T30, 15A03, 05C60, 30C10}
\begin{document}

\maketitle

\begin{abstract}
It is known that Paley graphs of square order have the strict-EKR property, that is, all maximum cliques are canonical cliques. Peisert-type graphs are natural generalizations of Paley graphs and some of them also have the strict-EKR property. Given a prime power $q \geq 3$, we study Peisert-type graphs of order $q^2$ without the strict-EKR property and with the minimum number of edges and we call such graphs extremal. We determine number of edges in extremal graphs for each value of $q$. If $q$ is a a square or a cube, we show the uniqueness of the extremal graph and classify all maximum cliques explicitly. Moreover, when $q$ is a square, we prove that there is no Hilton-Milner type result for the extremal graph, and show the tightness of the weight-distribution bound for both non-principal eigenvalues of this graph. 
\end{abstract}


\section{Introduction}

Throughout the paper, let $p$ be a prime and $q$ a power of $p$, and let $\F_q$ be the finite field with $q$ elements. We also let $\F_q^+$ and $\F_q^*=\F_q \setminus \{0\}$ be the additive group and multiplicative group of $\F_q$, respectively. 

Given an abelian group $G$ and a connection set $S \subset G \setminus \{0\}$ with $S=-S$, the {\em Cayley graph} $\operatorname{Cay}(G, S)$ is
the undirected graph whose vertices are elements of $G$, such that two vertices $g$ and $h$ are adjacent if and only if $g-h \in S$. 

We begin with the definition of the central objects in this paper.

\begin{definition}[Peisert-type graphs]\label{defn:peisert-type}
Let $q$ be a prime power. Let $S \subset \F_{q^2}^*$ be a union of $m \leq q$ cosets of $\F_q^*$ in $\F_{q^2}^*$, that is,
\begin{equation}\label{eq:coset}
S=c_1\F_q^* \cup c_2\F_q^* \cup \cdots \cup c_m \F_q^*.
\end{equation}
Then the Cayley graph $X=\operatorname{Cay}(\F_{q^2}^+, S)$ is said to be a Peisert-type graph of type $(m,q)$. 
\end{definition}

Let us point out that while Peisert-type graphs were introduced formally recently \cite{AGLY22, AY22}, they can date back to the construction of semi-primitive cyclotomic strongly regular graphs due to Brouwer, Wilson, and Xiang~\cite{BWX99} around 20 years ago. Peisert-type graphs can be also viewed as fusions of certain amorphic cyclotomic association schemes \cite[Section 5.4]{DM10}.
However, these connections are not necessary in this paper. Recall that for an odd prime power $q$, the {\em Paley graph of order $q^2$} is $\operatorname{Cay}(\F_{q^2}^+,(\F_{q^2}^*)^2)$, where $(\F_{q^2}^*)^2$ is the subset of squares in $\F_{q^2}^*$. The definition of Peisert-type graphs is clearly motivated by the well-studied Paley graphs and Peisert graphs, which are only defined in finite fields with odd characteristics (for example, it does not make sense to talk about squares in a field with characteristic $2$) \cite[Lemma 2.13]{AY22}. Thus, in \cite{AGLY22, AY22}, only Peisert-type graphs with odd characteristics were considered. However, the definition of Peisert-type graphs naturally extends to finite fields with characteristic 2, and we also consider Peisert-type graphs of even order in this paper. Note that the case $q = 2$ is trivial because for $m = 1$ and $m = 2$ there exists a unique Peisert-type graph, namely, the disjoint union of two edges and the 4-cycle, respectively. We thus assume that $q \ge 3$ in the following discussion.

Let $X$ be a Peisert-type graph of type $(m,q)$. It is well
known that $X$ is a strongly regular graph \cite[Corollary 5]{AGLY22} and thus the Delsarte-Hoffman bound implies that the clique number of $X$ is $q$ \cite[Theorem 7]{AGLY22}. From the decomposition of the connection set into $\F_q^*$-cosets in equation~\eqref{eq:coset}, it is clear that translates of $c_1\F_q, c_2\F_q, \ldots, c_m\F_q$ are (maximum) cliques in $X$. These cliques are known as the \emph{canonical cliques} in $X$ and we say $X$ has \emph{the strict-EKR property} (sometimes we also say that \emph{the EKR theorem holds for $X$}) if all maximum cliques in $X$ are canonical \cite{AGLY22, GM15, Y24}. For brevity, when we say a clique is \emph{non-canonical}, then the clique is implicitly assumed to be maximum. Such terminology is reminiscent of the classical Erd\H{o}s-Ko-Rado (EKR) theorem \cite{EKR61}, where all maximum families of $k$-element subsets of $\{1, 2, \ldots, n\}$ are canonically intersecting (that is, there is a common element in the intersection) whenever $n \geq 2k+1$. In \cite{AGLY22}, Asgarli, Goryainov, Lin, and Yip used the fact that Peisert-type graphs can be realized as block graphs of orthogonal arrays coming from point-line incidences on affine planes and showed that the definition of canonical cliques in the two families of graphs agree with each other. There are lots of combinatorial objects where an analogue of the EKR theorem holds; we refer to the book by Godsil and Meagher \cite{GM15} for a general discussion on EKR-type results. 

Blokhuis~\cite{B84} proved the EKR theorem for Paley graphs of square order.  Sziklai~\cite{S99} extended Blokhuis' result to a family of generalized Paley graphs. It is easy to verify that such generalized Paley graphs are Peisert-type graphs \cite[Lemma 2.13]{AY22}. Recently, Asgarli and Yip \cite{AY22} showed that the strict-EKR property holds for a family of Peisert-type graphs with nice algebraic properties on the connection set, generalizing the result by Blokhuis~\cite{B84} and Sziklai~\cite{S99} and confirming a conjecture of Mullin~\cite[Chapter 8]{M09} on Peisert graphs. Motivated by these extensions of the EKR theorem for Paley graphs, very recently, Yip \cite{Y24} showed that the strict-EKR property holds for almost all Peisert-type graphs of type $(\frac{q+1}{2},q)$ as $q \to \infty$. In other words, almost all pseudo-Paley graphs of Peisert-type have the strict-EKR property, indicating that Paley graphs are perhaps not too special among the family of pseudo-Paley graphs in the viewpoint of EKR. 

However, it remains widely open to completely classify Peisert-type graphs with the strict-EKR property \cite[Problem 29]{AGLY22}. Indeed, since Peisert-type graphs can be realized as block graphs of orthogonal arrays \cite{AGLY22}, this problem is an interesting sub-problem of a more general open problem on characterizing orthogonal arrays whose block graphs satisfy the strict-EKR property \cite[Problem 16.4.1]{GM15}. On the other hand, it is known that there are Peisert-type graphs of small order without the strict-EKR property \cite[Example 2.21]{AY22}. Moreover, there is an infinite family of Peisert-type graphs for which the strict-EKR property fails. This phenomenon was briefly discussed in \cite[Section 5]{AGLY22}, and in particular, the following result was proved:

\begin{theorem}[{\cite[Section 5]{AGLY22}}]\label{thm:OAEKR}
If $q>(m-1)^2$, then all Peisert-type graphs of type $(m,q)$ have the strict-EKR property. Moreover, when $q$ is a square, there exists a Peisert-type graph of type $(\sqrt{q}+1,q)$ that fails to have the strict-EKR property.
\end{theorem}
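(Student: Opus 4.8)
The plan is to work throughout in the affine model of the graph. Identifying $\F_{q^2}$ with the affine plane $\AG(2,q)$ over $\F_q$, each nonzero coset $c_i\F_q^*$ becomes one of $q+1$ possible \emph{directions} (parallel classes of affine lines), and two points $x,y$ are adjacent in $X$ precisely when the line joining them has one of the $m$ chosen directions $D_1,\dots,D_m$. In this language a canonical clique is exactly a full affine line in one of these directions, and a maximum clique is a set of $q$ points satisfying this adjacency condition pairwise. This reduces both assertions to incidence geometry in $\AG(2,q)$.

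For the first assertion I would argue by contradiction: assume $X$ of type $(m,q)$ has a non-canonical maximum clique $C$, and extract a large ``almost-line'' by double counting. Fixing $P\in C$, the remaining $q-1$ points are distributed among the $m$ lines through $P$ in the chosen directions, so by pigeonhole one such line $\ell$ satisfies $k := |C\cap\ell| \ge 1+(q-1)/m$. Since $C$ is non-canonical and $|C|=q=|\ell|$, we have $C\not\subseteq\ell$, so we may pick $Q\in C\setminus\ell$. The key point is that the $k$ lines joining $Q$ to the points of $C\cap\ell$ must all have \emph{distinct} directions (two coincident ones would force $Q\in\ell$), and none of them can be the direction of $\ell$ itself (that line through $Q$ is parallel to $\ell$). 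Hence $k\le m-1$. Combining the two bounds gives
\[
1+\frac{q-1}{m}\le k\le m-1,
\]
which rearranges to $q\le (m-1)^2$, contradicting the hypothesis $q>(m-1)^2$.

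For the second assertion, suppose $q=r^2$ and work inside the tower $\F_r\subset\F_{r^2}=\F_q\subset\F_{r^4}=\F_{q^2}$. I would take $C$ to be any $2$-dimensional $\F_r$-subspace of $\F_{r^4}$ that is not a $1$-dimensional $\F_{r^2}$-subspace (for instance $C=\F_r\oplus\F_r\omega$ with $\omega\in\F_{r^4}\setminus\F_{r^2}$). Then $|C|=r^2=q$, and since $C$ is closed under subtraction it is a clique for any connection set containing $C\setminus\{0\}$. The crucial computation is that $C\setminus\{0\}$ meets exactly $r+1=\sqrt q+1$ of the $r^2+1$ cosets of $\F_q^*$: each coset $D=c\F_{r^2}$ is itself a $2$-dimensional $\F_r$-subspace, and as $C\ne D$ we get $\dim_{\F_r}(C\cap D)\le 1$, so $D$ meets $C$ in either $0$ or $r-1$ nonzero points; since the $r^2-1$ nonzero elements of $C$ are partitioned by cosets, the number $t$ of occupied cosets satisfies $t(r-1)=r^2-1$, i.e.\ $t=r+1$. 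Defining $S$ to be the union of these $r+1$ cosets yields a Peisert-type graph of type $(\sqrt q+1,q)$ in which $C$ is a maximum clique, and $C$ is non-canonical because any canonical clique containing $0$ is a $1$-dimensional $\F_{r^2}$-subspace, whereas $C$ is not.

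The routine parts (the affine dictionary, existence of $\omega$, and the subspace-intersection count) are standard linear algebra over $\F_q$. The main conceptual step in the first assertion is recognizing that a point off $\ell$ ``uses up'' distinct directions, forcing $k\le m-1$; the main step in the second is the clean count showing that a generic $\F_r$-plane hits exactly $\sqrt q+1$ directions, which is exactly the threshold at which Part 1 no longer applies (here $(m-1)^2=q$). I expect no serious obstacle beyond carefully ruling out degenerate intersections in the direction count.
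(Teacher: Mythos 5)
Your proof is correct. The paper only cites this theorem from \cite{AGLY22} without reproving it, but your argument is essentially the standard one: the first part is the affine-plane translation of the orthogonal-array counting argument behind \cite[Corollary 5.5.3]{GM15} (pigeonhole a heavy line $\ell$ through a clique point, then observe a point off $\ell$ sees the points of $C\cap\ell$ in pairwise distinct directions, none equal to that of $\ell$), and the second part is exactly the subfield-plane construction that this paper itself uses in the proof of Theorem~\ref{thm:thm1} (there written as $V=U\oplus x\F_{p^k}$ with $S=(V\setminus\{0\})\F_q^*$), together with the same coset-intersection count showing $V$ meets exactly $\sqrt{q}+1$ cosets.
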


In \cite{Y24}, it is also shown that if $m$ is very close to $q$, then, perhaps unsurprisingly, almost all Peisert-type graphs of type $(m,q)$ fail to satisfy the strict-EKR property, as $q \to \infty$. In this paper, we are interested in determining the ``smallest" graphs without the strict-EKR property and exploring interesting algebraic properties in these graphs.

Let $X$ be a Peisert-type graph of type $(m,q)$ without strict EKR-property. We say that $X$ is \emph{strict-EKR extremal} (or simply \emph{extremal}), if all Peisert-type graphs of type $(m-1,q)$ have the strict-EKR property, in other words, $X$ is minimum in terms of failing the strict-EKR property. Theorem~\ref{thm:OAEKR} states there exists an extremal Peisert-type graph of type $(\sqrt{q}+1,q)$ when $q$ is a square. The main goal of the present paper is to study extremal graphs more closely and prove more refined structural results. We also classify large cliques (in particular, non-canonical cliques) in these graphs. 

Our first main result refines Theorem~\ref{thm:OAEKR} and determines the parameter of an extremal Peisert-type graph. Moreover, in our proof, for each prime power $q$, we construct an extremal Peisert-type graph of order $q^2$ explicitly.

\begin{theorem}\label{thm:thm1}
Let $q=p^n \geq 3$, where $p$ is a prime and $n$ is a positive integer. Let $X$ be an extremal Peisert-type graph defined over $\F_{q^2}$.
\begin{itemize}
    \item If $n=1$, then $X$ is of type $(\frac{p+3}{2},p)$ and $X$ is unique up to isomorphism.
    \item If $n>1$, then $X$ is of type $(p^{n-k}+1,q)$, where $k$ is the largest proper divisor of $n$.
\end{itemize}
\end{theorem}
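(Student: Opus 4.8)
The plan is to prove Theorem~\ref{thm:thm1} by combining two ingredients: a \emph{lower bound} showing that Peisert-type graphs of small type always have the strict-EKR property (so an extremal graph cannot have type smaller than the claimed value), and a \emph{construction} exhibiting an extremal graph of exactly the claimed type. The definition of extremality means I must pin down the smallest $m$ for which some type-$(m,q)$ graph fails strict-EKR; equivalently, I need the largest $m_0$ such that \emph{every} type-$(m_0,q)$ graph has strict-EKR, and then the extremal type is $m_0+1$, provided I can build a failing example at $m_0+1$.

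\smallskip

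For the lower bound I would argue contrapositively: suppose $X$ of type $(m,q)$ admits a non-canonical (maximum) clique $C$ with $0\in C$. Since $|C|=q$ equals the clique number (the Delsarte-Hoffman bound is tight by \cite[Theorem 7]{AGLY22}), $C$ is a $q$-element subset of $\F_{q^2}$ with all pairwise differences in $S=c_1\F_q^*\cup\cdots\cup c_m\F_q^*$. After normalizing (scaling so that a second point is $1$, using that $S$ is a union of $\F_q^*$-cosets), I would study $C$ as an $\F_p$-configuration: the key structural fact is that a canonical clique is precisely a coset of an $\F_q$-line $c_i\F_q$, so non-canonicity forces $C$ to not be contained in any single $c_i\F_q$. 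The plan is to extract from such a $C$ an additive/multiplicative obstruction, most naturally by viewing differences through the norm or trace map $\F_{q^2}\to\F_q$, or by a polynomial/Rédei-type count (as in Blokhuis' and Sziklai's proofs), to force $m$ to be large. The threshold $q>(m-1)^2$ from Theorem~\ref{thm:OAEKR} already gives one such bound; I expect the refinement to the exact values $\tfrac{p+1}{2}$ (prime case) and $p^{n-k}$ (composite case) to come from a subfield analysis: when $n>1$, the relevant obstruction lives in the largest proper subfield $\F_{p^{n-k}}\subset\F_q$ (where $k$ is the largest proper divisor of $n$, so $p^{n-k}$ is the smallest nontrivial prime-order subfield index), and this is exactly the source of the number $p^{n-k}+1$.

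\smallskip

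For the construction (the upper bound on the extremal type), I would explicitly produce a failing connection set. When $n>1$, the natural choice is to take $S$ to be a union of $\F_q^*$-cosets whose representatives are governed by the subfield $\F_{p^{n-k}}$: concretely, I expect to choose $S$ so that a \emph{subfield} $\F_{p^{n-k+?}}$ (or an $\F_{p^{n-k}}$-subspace) of $\F_{q^2}$ sits inside $S\cup\{0\}$ as a non-canonical clique, giving a clique that is an additive subgroup rather than a multiplicative coset of $c_i\F_q$. Counting the $\F_q^*$-cosets covered by such a subfield yields exactly $m=p^{n-k}+1$. In the prime case $n=1$, the analogous construction replaces the subfield mechanism with the observation that $q=p$ has no proper subfields, so the extremal type jumps to $\tfrac{p+3}{2}$; here I would adapt the small-order examples of \cite[Example 2.21]{AY22} and the $\sqrt q+1$ example of Theorem~\ref{thm:OAEKR} degenerates, so a direct combinatorial/polynomial argument over $\F_{p^2}$ is needed.

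\smallskip

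The main obstacle will be the sharp lower bound, i.e.\ proving that \emph{no} type-$(m,q)$ graph with $m$ below the threshold can fail strict-EKR; Theorem~\ref{thm:OAEKR} only gives the coarse bound $q>(m-1)^2$, which is far from the exact values, so the heart of the proof is a delicate argument (presumably via subfield structure, norm/trace considerations, or a Rédei polynomial count over $\F_{q^2}$) pinning down the precise threshold. A secondary difficulty is the uniqueness claim in the prime case $n=1$: beyond exhibiting one extremal graph, I must show that up to isomorphism (i.e.\ up to the action of $\operatorname{A\Gamma L}$-type symmetries preserving Peisert-type structure) the type-$(\tfrac{p+3}{2},p)$ failing example is unique, which requires classifying all non-canonical configurations at that threshold rather than merely detecting one.
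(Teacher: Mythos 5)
Your two-part architecture (a sharp lower bound showing all smaller types have strict-EKR, plus an explicit failing construction at the threshold) is exactly the paper's, and your instinct that the threshold is governed by subfield structure is correct. However, the proposal leaves its central step --- that every type-$(m,q)$ graph with $m\le p^{n-k}$ (resp.\ $m\le\frac{p+1}{2}$ when $q=p$) has the strict-EKR property --- as an acknowledged ``main obstacle'' to be resolved by some unspecified R\'edei-polynomial or norm/trace argument. The missing idea is that no new such argument is needed: Lemma~\ref{lem:correspondence} converts a non-canonical clique $C$ containing $0$ into a set $\pi(C)$ of $q$ points of $\AG(2,q)$ with $1<|D(\pi(C))|\le m$, and the Blokhuis--Ball--Brouwer--Storme--Sz\H{o}nyi direction theorem (Lemma~\ref{lem:Ball}) then states outright that if $|D(\pi(C))|<\frac{q}{2}+1$, the set $\pi(C)$ is $K$-linear for some subfield $K\subsetneq\F_q$ and $|D(\pi(C))|\ge q/|K|+1\ge p^{n-k}+1$. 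For $q=p$ prime the corresponding input is the R\'edei--Megyesi/Lov\'asz--Schrijver bound $|D(\pi(C))|\ge\frac{p+3}{2}$, and the Lov\'asz--Schrijver classification of the equality case ($\pi(C)$ affinely equivalent to $\{(x,x^{(p+1)/2}):x\in\F_p\}$, hence $\mathcal{D}(X)$ projectively determined) is precisely what settles the uniqueness claim you flag as a secondary difficulty; without naming these inputs both the lower bound and the uniqueness remain unproved.

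There is also a concrete error in the construction: the non-canonical clique must be a subspace over $\F_{p^k}$, the \emph{largest proper subfield} of $\F_q$, not an ``$\F_{p^{n-k}}$-subspace.'' For $n=3$ one has $k=1$ and $n-k=2$, and $\F_{p^2}\subset\F_{q^2}=\F_{p^6}$ has no subspace of size $q=p^3$ at all, so the hedged version of your construction cannot work. The number $p^{n-k}+1$ is not the order of the governing subfield but the count $q/|K|+1$ of $\F_q^*$-cosets met by an $\F_{p^k}$-subspace of size $q$ not contained in an $\F_q$-line; the paper realizes this with $V=U\oplus x\F_{p^k}$, where $U\subset\F_q$ is an $\F_{p^k}$-hyperplane and $x\in\F_{q^2}\setminus\F_q$, and verifies that the cosets $\F_q^*,(u+x)\F_q^*$ ($u\in U$) are pairwise distinct. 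Finally, in the prime case no separate construction is required: the Lov\'asz--Schrijver example itself furnishes the extremal graph of type $(\frac{p+3}{2},p)$.
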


Our second main result classifies extremal Peisert-type graphs when $q$ is square. Moreover, we explore the structure of cliques and eigenfunctions in these graphs. To formulate the next theorem, we need the notion of the weight-distribution bound, which is a lower bound for the size of the support of an eigenfunction corresponding to a non-principal eigenvalue of a distance-regular graph \cite[Corollary 1]{KMP16}; see Section~\ref{sec:WDB} for more background. 

\begin{theorem}\label{thm:thm2}
Let $q$ be the square of a prime power. There are exactly $(q+1)\sqrt{q}$ distinct extremal Peisert-type graph defined over $\F_{q^2}$, but they are all isomorphic. Each Peisert-type graph of type $(3,q)$ can be uniquely extended to an extremal graph. Moreover, if $X$ is such an extremal graph, then the following statements hold:
\begin{itemize}
    \item $X$ is isomorphic to the affine polar graph $VO^+(4,\sqrt{q})$.
    \item Maximum cliques in $X$ can be explicitly determined. In particular, $X$ has exactly $\sqrt{q}+1$ canonical cliques containing $0$, and $\sqrt{q}+1$ non-canonical cliques containing $0$; moreover, these $2(\sqrt{q}+1)$ cliques lie in the same orbit under the action of the automorphism group of $X$.
    \item There is no Hilton-Milner type result: all maximal cliques in $X$ are maximum cliques.
    \item The weight-distribution bound is tight for both non-principal eigenvalues.
\end{itemize}
\end{theorem}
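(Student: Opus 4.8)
The plan is to prove Theorem~\ref{thm:thm2} in several stages, tackling the four bullet points in a logical order where later parts lean on the structural identification established earlier. The overall strategy rests on the isomorphism $X \cong VO^+(4,\sqrt{q})$, so I would establish this identification first and use it as the workhorse for the remaining claims.

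\medskip

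\textbf{Step 1: Counting and the isomorphism.} Write $r=\sqrt{q}$. First I would pin down the explicit structure of an extremal graph. By Theorem~\ref{thm:thm1}, since $q$ is a square, an extremal graph over $\F_{q^2}$ has type $(r+1,q)$, so its connection set is a union of $r+1$ cosets of $\F_q^*$. The plan is to describe these graphs via the quadratic-form / affine-polar model: a coset $c\F_q^*$ of $\F_q^*$ in $\F_{q^2}^*$ corresponds to a point of $\PG(1,q)$, and I expect the ``extremal'' condition to force the $r+1$ cosets to correspond to a \emph{Baer subline} $\PG(1,r) \subset \PG(1,q)$ (equivalently, an $\F_r$-rational structure on the projective line). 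Counting Baer sublines of $\PG(1,q)$ gives the count $(q+1)\sqrt{q}$; I would verify this via the transitive action of $\PGL(2,q)$ on sublines and a stabilizer computation. The statement that each type-$(3,q)$ graph lies in exactly one extremal graph should follow because three distinct cosets (three points of $\PG(1,q)$) determine a unique Baer subline through them, mirroring the fact that three points determine a unique sub-line over the quadratic subfield. The isomorphism with $VO^+(4,\sqrt{q})$ I would obtain by exhibiting the quadratic form of $+$-type on $\F_{q^2} \cong \F_r^4$ whose associated affine polar graph has exactly this connection set --- concretely, identifying $\F_{q^2}$ with a $4$-dimensional $\F_r$-space and checking that the ``isotropic difference'' relation matches membership in the union of the $r+1$ cosets.

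\medskip

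\textbf{Step 2: Cliques, and the Hilton--Milner-type statement.} With the affine polar model in hand, maximum cliques in $X$ (which have size $q=r^2$ by the Delsarte--Hoffman bound recalled in the introduction) correspond to the maximal totally isotropic affine subspaces, i.e. the \emph{generators} of $VO^+(4,r)$, which are well understood: a hyperbolic quadric in $\PG(3,r)$ carries two reguli of lines. This dictionary should let me list all maximum cliques through $0$ explicitly and sort them into the $r+1$ canonical ones (the original cosets, one regulus) and $r+1$ non-canonical ones (the opposite regulus), and the single transitive orbit claim follows from the transitivity of the isometry group $O^+(4,r)$ (inside $\operatorname{Aut}(X)$) on the generators of each family together with a symmetry swapping the two reguli. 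For the Hilton--Milner-type assertion that every maximal clique is maximum, I would argue geometrically: a clique in $X$ is an affine totally isotropic flat, a maximal such flat in the $+$-type geometry on a $4$-space is a full generator (an affine plane, size $r^2=q$), and there are no maximal isotropic flats of intermediate size. I expect this to reduce to the standard fact that totally isotropic subspaces of a nondegenerate $+$-type form on an $\F_r$-space of dimension $4$ that are maximal all have the same dimension $2$.

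\medskip

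\textbf{Step 3: Tightness of the weight-distribution bound.} For the final bullet I would recall that $X$ is strongly regular with two non-principal eigenvalues (as a Peisert-type graph, per the introduction), say $k_1$ and $k_2$; the weight-distribution (or ratio/inertia) bound for cliques relative to an eigenvalue is tight precisely when one can exhibit a suitable eigenfunction supported compatibly with a maximum clique --- equivalently, when the characteristic vector of a maximum clique, adjusted by the all-ones vector, lies in the appropriate eigenspace pairing. Using the $VO^+(4,r)$ description I would construct, for each of the two non-principal eigenvalues, an explicit eigenfunction realizing equality: the natural candidates are functions built from the two reguli or from the quadratic form itself (for instance, indicator-type combinations of generators in one regulus versus the other). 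The tightness then amounts to checking that these candidate functions are genuine eigenfunctions for the respective eigenvalues and that they certify equality in the weight-distribution bound, which is a finite eigenvalue computation in the explicit model.

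\medskip

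I expect the main obstacle to be Step~1, specifically verifying that the ``extremal'' combinatorial condition (fewest cosets failing strict-EKR, as characterized by Theorem~\ref{thm:thm1}) coincides exactly with the Baer-subline / $+$-type quadratic-form condition, and then nailing the isomorphism $X\cong VO^+(4,r)$ rather than merely an abstract strong-regularity match; getting the form and the coset structure to align on the nose, and ruling out the $-$-type or degenerate alternatives, is where the real work lies. Once that identification is secure, the clique classification, the Hilton--Milner statement, and the eigenfunction constructions should all follow from standard facts about the hyperbolic quadric $\mathcal{Q}^+(3,r)$ and its two reguli.
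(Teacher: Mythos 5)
Your overall route is the same as the paper's: establish that an extremal graph of type $(\sqrt{q}+1,q)$ is governed by a $2$-dimensional $\F_{\sqrt{q}}$-subspace (equivalently, a Baer subline of $\PG(1,q)$ as direction set), identify the graph with $VO^+(4,\sqrt{q})$ via an explicit hyperbolic quadratic form, read off the $2(\sqrt{q}+1)$ maximum cliques as the two reguli of generators, and get the Hilton--Milner and eigenfunction statements from that model. Your counting is phrased via Baer sublines and the orbit-stabilizer computation $|\PGL(2,q)|/|\PGL(2,\sqrt{q})|=(q+1)\sqrt{q}$, and the ``exactly one extremal graph through each type-$(3,q)$ subgraph'' via the fact that three points determine a unique Baer subline; the paper instead counts non-canonical cliques by a Grassmannian computation and divides by the $\sqrt{q}+1$ cliques per graph, then double-counts triples. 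These are equivalent computations, and yours is arguably cleaner once the structural identification is in place. However, there are two concrete gaps.

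First, the crux you flag but do not supply: proving that the extremal condition \emph{forces} the connection set to be $(V\setminus\{0\})\F_q^*$ for a $2$-dimensional $\F_{\sqrt{q}}$-subspace $V$ (equivalently, that the direction set is a Baer subline). Theorem~\ref{thm:thm1} only gives you the type $(\sqrt{q}+1,q)$; it does not tell you which $\sqrt{q}+1$ cosets can occur. The paper gets this from the R\'edei-type direction theorem of Blokhuis, Ball, Brouwer, Storme and Sz\H{o}nyi (Lemma~\ref{lem:Ball}): a non-canonical clique through $0$ determines at most $\sqrt{q}+1$ directions, hence must be $\F_{\sqrt{q}}$-linear, and then the equality case forces it to be an $\F_{\sqrt{q}}$-subspace whose multiplicative saturation recovers the whole connection set (Proposition~\ref{prop:subspace}, Corollary~\ref{cor:S}, Proposition~\ref{prop:2}). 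Without this ingredient every subsequent step --- the count, the uniqueness up to isomorphism, the identification with $VO^+(4,\sqrt{q})$ --- is unsupported, and no purely group-theoretic argument about $\PGL(2,q)$ will substitute for it.

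Second, the weight-distribution bound for the negative eigenvalue $-\sqrt{q}-1$ cannot be certified by any construction built from cliques or reguli, which is all your Step~3 offers. By Lemma~\ref{WDBsrg1}(1), an optimal eigenfunction for $\theta_2=-\sqrt{q}-1$ is supported on an induced complete bipartite subgraph with parts of size $\sqrt{q}+1$; a difference of characteristic vectors of two Delsarte cliques lies in the $\theta_2$-eigenspace but has support $2(q-\sqrt{q})$, far above the bound $2(\sqrt{q}+1)$, while the pair-of-isolated-cliques construction only handles the positive eigenvalue $q-\sqrt{q}-1$. The paper produces the required $K_{\sqrt{q}+1,\sqrt{q}+1}$ from two \emph{cocliques}: the norm-one subgroup $Q=\{\gamma\in\F_q^*:\gamma^{\sqrt{q}+1}=1\}$ and its translate $Q\beta$, using that $Q$ lies on a line avoiding the connection set while every difference $\gamma_2-\gamma_1\beta$ lands in $S$. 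Nothing in your sketch points at this object, and your conflation of the weight-distribution bound (a lower bound on eigenfunction support) with the ratio bound for cliques suggests the wrong target is being aimed at for this bullet.
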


In the spirit of EKR, analogues of the celebrated Hilton-Milner theorem \cite{HM67} have been studied extensively: these results concern the structure of maximal extremal configurations that are not maximum. For many Peisert-type graphs, we expect there are Hilton-Milner type results, that is, there is a maximal clique with size less than $q$, and the structure of the second largest maximal clique can be well predicted. We refer to  \cite{AY22, BEHW96, GKSV18, GMS22, GSY23, S99, Y22, Y23, Y24} for extensive discussions on maximal cliques as well as the stability of canonical cliques in generalized Paley graphs and Peisert-type graphs. However, the above result shows that there is no Hilton-Milner type result in an extremal Peisert-type graph of type $(\sqrt{q}+1,q)$, which is surprising. More surprisingly, canonical cliques and non-canonical cliques turn out to be equivalent under the action of the automorphism group of $X$. We remark that recently a result of a similar flavor has been proved in the setting of permutation groups, namely, there is no Hilton-Milner type result for the action on $\GL_2(\F_q)$ on $\F_q \times \F_q \setminus \{(0,0)\}$  \cite{MR21}. 

Our third main result classifies extremal Peisert-type graphs and the structure of maximum cliques therein when $q$ is a cube (but $q$ is not a square).

\begin{theorem}\label{thm:thm3}
Let $q=r^3$, where $r$ is a prime power and a non-square. There are exactly $r(r^5+r^4+r^3+r^2+r+1)$ distinct extremal Peisert-type graphs defined over $\F_{q^2}$, but they are all isomorphic. Moreover, if $X$ is such an extremal graph, then maximum cliques in $X$ can be explicitly determined; in particular, $X$ has exactly $r^2+1$ canonical cliques containing $0$, and $r^2+r+1$ non-canonical cliques containing $0$.
\end{theorem}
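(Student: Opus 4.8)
The plan is to recast everything in terms of directions on the projective line. Identifying $\F_{q^2}=\F_{r^6}$ with the affine plane $\AG(2,r^3)$ over $\F_q$ (where $q=r^3$), the $q+1=r^3+1$ cosets of $\F_q^*$ in $\F_{q^2}^*$ become the points of $\PG(1,r^3)$, so that prescribing the connection set $S$ is the same as prescribing a subset $D\subseteq\PG(1,r^3)$ with $|D|=m$. Under this dictionary the canonical cliques through $0$ are the lines $c_i\F_q$, one per point of $D$, whereas an arbitrary maximum clique through $0$ is a $q$-subset of $\AG(2,r^3)$ all of whose determined directions lie in $D$; I write $\mathcal{D}(C)\subseteq\PG(1,r^3)$ for the set of directions of such a clique $C$. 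Since $r$ is a non-square, writing $q=p^n$ one checks that $3$ is the smallest prime factor of $n$, so the largest proper divisor of $n$ is $n/3$ and Theorem~\ref{thm:thm1} fixes the extremal type as $(p^{n-n/3}+1,q)=(r^2+1,q)$. Thus I must decide, among all $D$ with $|D|=r^2+1$, exactly which yield a graph possessing a non-canonical clique.

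The key external input is Ball's theorem on the number of directions determined by a $q$-subset of $\AG(2,q)$. Because $\F_r$ is the unique maximal proper subfield of $\F_{r^3}$, this classification shows that any non-canonical maximum clique $C$ with $|\mathcal{D}(C)|\le r^2+1$ must be $\F_r$-linear, i.e.\ a translate of an $\F_r$-subspace $W\subseteq\F_{r^6}$ of $\F_r$-rank $3$ (the competing threshold $(q+3)/2$ exceeds $r^2+1$). Partitioning $W\setminus\{0\}$ according to its directions, and noting that two distinct lines can each meet the rank-$3$ space $W$ in rank $2$ only if they share a nonzero vector — impossible for distinct $\F_{r^3}$-lines — one finds $|\mathcal{D}(W)|\ge r^2+1$, with equality exactly when a single direction $\ell_0$ is met in rank $2$ and the remaining $r^2$ directions are met in rank $1$. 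In the affine chart where $\ell_0$ is the point at infinity, such a direction set takes the shape $\{\ell_0\}\cup(c+B)$ with $B$ a rank-$2$ $\F_r$-subspace of $\F_{r^3}$ and $c+B$ one of its cosets. A direct computation produces, conversely, an explicit $W_0$ realizing any prescribed $\{\ell_0\}\cup(c+B)$, so that $D$ is extremal if and only if it has exactly this form.

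Counting then reduces to counting representations $(\ell_0,c+B)$: there are $r^3+1$ choices of $\ell_0$, then $r^2+r+1$ rank-$2$ subspaces $B$ of the three-dimensional $\F_r$-space $\F_{r^3}$ and $r$ cosets of each, giving $r(r^2+r+1)(r^3+1)$ pairs. To identify this with the number of extremal graphs I must prove that the special direction $\ell_0$ is \emph{unique} for each extremal $D$. Passing to the chart centred at a second candidate $\ell_1\in c+B$, this reduces to the field-theoretic assertion that $\{0\}\cup\{\beta^{-1}:\beta\in B\setminus\{0\}\}$ is never an $\F_r$-subspace of $\F_{r^3}$; I expect this to be the main obstacle, to be overcome either by a lacunary Rédei-polynomial argument or by showing that such a configuration would force a subfield $\F_{r^2}\subseteq\F_{r^3}$, contradicting $\gcd(2,3)=1$. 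The same uniqueness input yields the clique counts: the $r^2+1$ canonical cliques through $0$ correspond to the points of $D$, while every non-canonical clique through $0$ has the fixed special direction $\ell_0$ and is therefore of the form $v\,W_0$ with $v\in\F_q^*$; since $\{v\in\F_q^*:vW_0=W_0\}=\F_r^*$ (otherwise $W_0$ would be $\F_{r^3}$-linear, i.e.\ a line), there are exactly $(r^3-1)/(r-1)=r^2+r+1$ of them.

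Finally, uniqueness up to isomorphism follows from a symmetry argument. Each $T\in\operatorname{\Gamma L}_2(\F_{r^3})$ acts $\F_{r^3}$-semilinearly on $\F_{r^6}=\F_{r^3}^2$, hence additively, and is therefore an isomorphism between Peisert-type graphs that induces the full action of $\operatorname{P\Gamma L}_2(r^3)$ on the direction sets $D$. It then suffices to establish transitivity on pairs $(\ell_0,c+B)$: as the group is $2$-transitive on the points of $\PG(1,r^3)$, I may fix $\ell_0$ and pass to $\operatorname{A\Gamma L}_1(r^3)$, where translations realize every coset representative $c$ and the Singer cycle $\F_{r^3}^*/\F_r^*$ acts regularly on the $r^2+r+1$ rank-$2$ subspaces $B$ (these being the points of the dual $\PG(2,r)$). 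Combined with the uniqueness of $\ell_0$, transitivity on pairs places all extremal graphs in a single isomorphism class, completing the proof.
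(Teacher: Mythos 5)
Your overall architecture is sound and runs largely parallel to the paper's: Ball's direction theorem forces non-canonical cliques to be rank-$3$ $\F_r$-subspaces (Proposition~\ref{prop:subspace}); the pigeonhole on $|W\setminus\{0\}|=r^3-1$ forces exactly one direction met in rank $2$ (Proposition~\ref{prop:3}); the direction sets are therefore $\{\ell_0\}\cup(c+B)$; and uniqueness up to isomorphism follows from transitivity of the affine group on such pairs (the paper's Lemma~\ref{LIsTransitiveOnScal} and Corollary~\ref{cor:iso3}). Your counting is organized differently: you count direction sets directly as pairs $(\ell_0,c+B)$, which requires uniqueness of the heavy direction $\ell_0$, whereas the paper counts the rank-$3$ clique subspaces globally and divides by the number per graph (Corollary~\ref{cor:numberofgraphsCube}); both give $r(r^2+r+1)(r^3+1)$. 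Your reduction of the uniqueness of $\ell_0$ to the assertion that $\{0\}\cup\{\beta^{-1}:\beta\in B\setminus\{0\}\}$ is never an $\F_r$-subspace of $\F_{r^3}$, via a M\"{o}bius chart change, is cleaner than the paper's Case 2 of Proposition~\ref{prop:NonCanonicalCliquesCube}, and the mechanism you name (such a configuration would make $1,b,b^2$ linearly dependent over $\F_r$, forcing a quadratic subfield) is exactly the paper's punchline; but you have only named it, not executed it. Since the remaining verification is two lines (with $B=\F_r\oplus b\F_r$, the relation $1/(1+b)=s+t/b$ gives $sb^2+(s+t-1)b+t=0$), this is an incompleteness rather than an error.

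The genuine gap is in the clique count. From ``every non-canonical clique through $0$ has special direction $\ell_0$'' you conclude it ``is therefore of the form $vW_0$''; that inference is not justified. Uniqueness of $\ell_0$ tells you only that $C\cap\F_q$ is a rank-$2$ subspace $U'$, and the Singer action gives $U'=vU$ for some $v\in\F_q^*$, but nothing you have proved rules out a second clique $U'\oplus\gamma\F_r$ with the same $U'$ and a genuinely different third generator $\gamma$. This is precisely Case 1 of the paper's Proposition~\ref{prop:NonCanonicalCliquesCube} (``each $U'$ corresponds to at most one $V'$''), which occupies a half-page computation. You can either supply that computation, or patch the count with machinery you already have: the number of rank-$3$ subspaces carrying the extremal direction structure is $r(r^6-1)(r^2+r+1)/(r-1)$, each determines its graph uniquely by Corollary~\ref{cor:S}, and dividing by your count of graphs yields $r^2+r+1$ cliques per graph. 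Finally, the paper's stated form of Ball's theorem (Lemma~\ref{lem:Ball}) requires $N<q/2+1$, which fails for $r=2$ where $r^2+1=q/2+1=5$; the paper settles $r=2$ by computation. Your parenthetical appeal to the threshold $(q+3)/2$ suggests you intend the full strength of the Ball--Blokhuis--Brouwer--Storme--Sz\H{o}nyi theorem, which would cover $r=2$ uniformly, but you should make that dependence explicit.
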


Our main results can be also formulated in terms of the structure of non-collinear point sets with size $q$ in $\AG(2,q)$ that determine the minimum number of directions; see Remark~\ref{rem:direction}. When $q=p^n$, where the smallest prime divisor of $n$ is at least $5$, contrary to Theorem~\ref{thm:thm2} and Theorem~\ref{thm:thm3}, extremal Peisert-type graphs defined over $\F_{q^2}$ may not be unique up to isomorphism. We refer to Example~\ref{ex:q=32} for two non-isomorphic extremal graphs when $q=2^5$. 

\textbf{Structure of the paper.}  
In Section~\ref{prelim}, we recall some basic terminologies and give some preliminary results. In Section~\ref{parameter}, we prove Theorem~\ref{thm:thm1}. The proof of Theorem~\ref{thm:thm2} and Theorem~\ref{thm:thm3} will be gradually developed in the subsequent sections. In Section~\ref{sec:count}, we analyze the structure of non-canonical cliques. In Section~\ref{sec:induced}, we construct extremal Peisert-type graphs induced by special geometric objects, and we will see their connection with affine polar graphs in Section~\ref{sec:polar}. The minimum size of the support of eigenfunctions will be discussed in Section~\ref{sec:eigenfunction}. Lastly, we discuss a question related to extremal block graphs of orthogonal arrays in Section~\ref{sec:OA}. The proof of Theorem~\ref{thm:thm3} can be found at the end of Section~\ref{sec:induced1}, and the proof of Theorem~\ref{thm:thm2} can be found at the end of Section~\ref{sec:eigenfunction}. We also include some computational results in Appendix \ref{sec:computation}.

\section{Preliminaries}\label{prelim}
In this section, we list some preliminary definitions and results.

\subsection{Affine and projective designs}
For the background on combinatorial designs, we refer to \cite{BJL99}.

For a set of points $V$, a set of blocks $B$, and an incidence relation $I$, the finite incidence structure $D = (V, B, I)$ is called a \emph{block 
design} with parameters $v, k, \lambda$ where $v, k, \lambda$ are positive integers if it satisfies the following  
conditions: 
\begin{enumerate}
    \item $|V| = v$;
    \item any two distinct points are joined by exactly $\lambda$ blocks. 
    \item any block is incident to exactly $k$ points.
\end{enumerate}
For a block design with parameters $v, k, \lambda$, the constants $r$ and $b$, where $r$ is the number of points incident to a given block and $b$ is the total number of blocks, are uniquely determined by $v,k$, and $\lambda$ (see \cite[Theorem 2.10, p. 10]{BJL99}).

Let $F$ be a field and $W$ an $n$-dimensional vector space over $F$. 
Then the set of all cosets of subspaces of $W$, ordered by inclusion, is called 
the \emph{$n$-dimensional affine space} over $F$. If $F=\F_q$, it 
will be denoted by $\AG(n,q)$. The cosets of $\{0\}$ are called \emph{points}, those of 
1-dimensional subspaces \emph{lines}, those of 2-dimensional subspaces \emph{planes}, those 
$(n-1)$-dimensional subspaces \emph{hyperplanes}, and in general the cosets of $i$-dimensional subspaces are called \emph{$i$-dimensional flats} or just \emph{$i$-flats}.  We also use the \emph{Gaussian coefficient} $\qbin{n}{i}{q}$ to denote the number 
of $i$-dimensional subspaces of an $n$-dimensional vector space over $\F_q$. The points 
of $\AG(n, q)$ together with the $d$-dimensional flats of $\AG(n, q)$ as blocks and  
incidence by natural containment form an incidence structure denoted by $\AG_d(n, q)$. 

\begin{proposition}[{\cite[Proposition 2.13, p. 11]{BJL99}}]
 $\AG_d(n, q)$ is a block design with parameters 
 $v = q^n$,
 $k = q^d$,
 $r = \qbin{n}{d}{q}$
 $\lambda = \qbin{n-1}{d-1}{q}$ 
 and 
 $b = q^{n-d}\qbin{n}{d}{q}$. 
\end{proposition}

Let $F$ be a field and $W$ an $(n + 1)$-dimensional vector space 
over $F$. Then the set of all subspaces of $W$, ordered by inclusion, is called the 
\emph{$n$-dimensional projective space} over $F$. If $F$ is the finite field $\F_q$, it will 
be denoted by $\PG(n, q)$. The 1-dimensional (2-dimensional, 3-dimensional, 
$n$-dimensional) subspaces of $W$ are called \emph{points} (\emph{lines}, \emph{planes}, \emph{hyperplanes}); 
in general, the $(i+1)$-dimensional subspaces are called \emph{$i$-flats}. The points of 
$\PG(n, q)$ together with the $d$-dimensional flats of $\PG(n, q)$ as blocks and  
incidence by natural containment form an incidence structure denoted by $\PG_d(n,q)$. 

\begin{proposition}[{\cite[Proposition 2.16, p. 13]{BJL99}}]
$\PG_d(n, q)$ is a block design with parameters 
$v = \qbin{n+1}{1}{q} = (q^{n+1}-1)/(q-1)$,
$k = \qbin{d}{1}{q} = (q^{d+1}-1)/(q-1)$,
$r = \qbin{n}{d}{q}$, 
$\lambda = \qbin{n-1}{d-1}{q}$, 
and
$b = \qbin{n+1}{d+1}{q}$.
\end{proposition}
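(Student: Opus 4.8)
The plan is to verify the three defining conditions of a block design by direct subspace counting, and then to read off (or double-check) the auxiliary constants $r$ and $b$. I identify each point of $\PG(n,q)$ with a $1$-dimensional subspace of the ambient $(n+1)$-dimensional $\F_q$-space $W$, and each $d$-flat with a $(d+1)$-dimensional subspace of $W$, with incidence being containment. The single tool I would invoke repeatedly is the correspondence theorem for subspaces: if $U\subseteq W$ is a fixed subspace of dimension $j$, then the subspaces $V$ with $U\subseteq V\subseteq W$ correspond bijectively, preserving inclusion, to the subspaces of the quotient $W/U$, with $\dim V=i$ matching dimension $i-j$ downstairs. Combined with the formula $\qbin{N}{i}{q}$ for the number of $i$-dimensional subspaces of an $N$-dimensional $\F_q$-space, this reduces every count below to a single Gaussian coefficient.

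First I would count the points: $v$ is the number of $1$-dimensional subspaces of $W$, namely $\qbin{n+1}{1}{q}=(q^{n+1}-1)/(q-1)$. Next, fixing one $d$-flat, that is one $(d+1)$-dimensional subspace $V_0$, the incident points are exactly the $1$-dimensional subspaces of $V_0$, of which there are $(q^{d+1}-1)/(q-1)$; this is $k$, and since every $(d+1)$-dimensional subspace looks the same, $k$ does not depend on the chosen block.

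The crucial step is the value of $\lambda$, which simultaneously establishes the homogeneity that legitimizes calling the structure a design. Given two distinct points $P_1=\langle v_1\rangle$ and $P_2=\langle v_2\rangle$, the smallest subspace containing both is the $2$-dimensional space $U=\langle v_1,v_2\rangle$, so a $d$-flat passes through both points precisely when the associated $(d+1)$-dimensional subspace contains $U$. By the correspondence theorem these subspaces biject with the $(d-1)$-dimensional subspaces of $W/U$, whose dimension is $(n+1)-2=n-1$; hence their number is $\qbin{n-1}{d-1}{q}$. The essential point is that this count depends only on the dimensions involved, not on the particular pair $P_1,P_2$, and this constancy is exactly condition (2) with $\lambda=\qbin{n-1}{d-1}{q}$. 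I expect this to be the only step demanding genuine care, since it is where the ``any two points'' uniformity must be proved rather than assumed.

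Finally, with $|V|=v$, a constant block size $k$, and a constant $\lambda$ in hand, the preceding proposition guarantees that $r$ and $b$ are determined, and I would confirm the stated formulas directly. Fixing a point $P=\langle v\rangle$, the $d$-flats through it correspond to the $d$-dimensional subspaces of the $n$-dimensional quotient $W/P$, so $r=\qbin{n}{d}{q}$; and the total number of $d$-flats is simply the number of $(d+1)$-dimensional subspaces of $W$, namely $b=\qbin{n+1}{d+1}{q}$. As a consistency check one can verify $bk=vr$ and $\lambda(v-1)=r(k-1)$ from standard Gaussian-coefficient identities, which pins down all five parameters simultaneously.
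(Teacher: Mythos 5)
Your proof is correct and is the standard subspace-counting argument; the paper itself gives no proof of this proposition, citing it directly from \cite[Proposition 2.16]{BJL99}, and your argument (quotient/correspondence theorem reducing each count to a single Gaussian coefficient, with the $\lambda$-count through the span of two points supplying the required uniformity) is exactly the expected one. The only caveat is notational: under the standard convention $\qbin{d}{1}{q}=(q^d-1)/(q-1)$, so the displayed equality $k=\qbin{d}{1}{q}=(q^{d+1}-1)/(q-1)$ in the statement has an index off by one, but the value you compute, $(q^{d+1}-1)/(q-1)$, is the correct number of points on a $d$-flat.
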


\begin{remark}\label{NontrivialSymmetricDesign}  For any integer $n \ge 2$ and prime power $q$, $\PG_{n-1}(n, q)$ is a non-trivial symmetric design, that is, a design with the same number of points and blocks. Indeed, the number of points and hyperplanes in $\PG(n, q)$ coincide and equal to $\qbin{n+1}{1}{q} = \qbin{n+1}{n}{q}$.   
\end{remark}


\begin{lemma}\label{Orbitlemma}
Let $D$ be a non-trivial symmetric design, and let $G < \operatorname{Aut}D$ be an automorphism group of $D$. Then $G$ is transitive on the set of blocks of $D$ if and only 
if $G$ is transitive on the set of points of $D$. 
\end{lemma}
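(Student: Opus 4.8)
The plan is to reduce the statement to the classical fact that, in a symmetric design, every automorphism fixes exactly as many points as blocks, and then finish by the Cauchy--Frobenius (Burnside) orbit-counting lemma. The crucial structural input is that the incidence matrix of a \emph{non-trivial} symmetric design is square and invertible; this is the one place where the hypotheses are genuinely used, and it is the step I would be most careful with.

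First I would set up the incidence matrix. Since $D$ is a symmetric design with parameters $(v,k,\lambda)$, its incidence matrix $N$, with rows indexed by points and columns by blocks, is $v \times v$ and satisfies $N N^{\top} = (k-\lambda) I + \lambda J$, where $J$ is the all-ones matrix. The matrix $(k-\lambda)I+\lambda J$ has eigenvalue $(k-\lambda)+\lambda v$ once and $k-\lambda$ with multiplicity $v-1$; using $k(k-1)=\lambda(v-1)$ the former equals $k^2$, so $\det(N N^{\top}) = (k-\lambda)^{v-1} k^2$. Non-triviality guarantees $\lambda < k$, hence this determinant is nonzero and $N$ is invertible over $\mathbb{Q}$.

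Next, for each $g \in G$ let $P_g$ and $Q_g$ denote the permutation matrices describing the action of $g$ on points and on blocks, respectively. Because $g$ preserves incidence, a routine bookkeeping of the index relations yields the intertwining identity $N Q_g = P_g N$; inverting $N$ gives $Q_g = N^{-1} P_g N$. Conjugate matrices have equal trace, so $\operatorname{tr}(Q_g) = \operatorname{tr}(P_g)$. Since the trace of a permutation matrix counts its fixed points, this says precisely that $g$ fixes the same number of blocks as points.

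Finally I would invoke orbit counting. The number of $G$-orbits on points equals $\tfrac{1}{|G|}\sum_{g \in G} \operatorname{tr}(P_g)$ and the number of $G$-orbits on blocks equals $\tfrac{1}{|G|}\sum_{g \in G} \operatorname{tr}(Q_g)$; by the previous step these sums coincide, so $G$ has equally many point-orbits and block-orbits. In particular $G$ has a single point-orbit if and only if it has a single block-orbit, which is exactly the claimed equivalence between transitivity on points and transitivity on blocks. The only real obstacle is establishing the invertibility of $N$ from non-triviality; once that is in hand, the conjugacy relation and the orbit count are purely formal.
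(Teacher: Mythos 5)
Your proof is correct. The paper itself does not argue this lemma at all: it simply cites \cite[Chapter III, Theorem 4.1]{BJL99} (the Dembowski--Hughes--Parker theorem that an automorphism group of a non-trivial symmetric design has equally many orbits on points and on blocks) and observes the lemma is the special case of one orbit. What you have written is, in effect, a self-contained proof of that cited theorem, and it is the standard one: $NN^{\top}=(k-\lambda)I+\lambda J$ has determinant $k^2(k-\lambda)^{v-1}$, non-triviality forces $\lambda<k$ via $\lambda(v-1)=k(k-1)$ with $k<v$ so $N$ is invertible, the intertwining relation $P_gN=NQ_g$ then makes $P_g$ and $Q_g$ conjugate, equal traces give equal fixed-point counts, and Cauchy--Frobenius converts that into equal orbit counts. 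Each of these steps checks out (in particular your eigenvalue computation and your identification of where non-triviality is used are exactly right). The only difference from the paper is that you supply the argument the paper outsources to the reference; if this were to be incorporated, one could either keep the citation or include your proof, and it would be worth stating explicitly that you are proving the stronger orbit-counting statement, of which transitivity is the one-orbit case.
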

\begin{proof}
This is a special case of \cite[Chapter III, Theorem 4.1]{BJL99}.
\end{proof}

\subsection{Basic property of Peisert-type graphs}

\begin{lemma}\label{lem:Cayley_iso}
Let $G$ be a group and $\operatorname{Cay}(G,S_1), \operatorname{Cay}(G,S_2)$ be two Cayley graphs over this group. If there exists $\varphi \in \operatorname{Aut}G$ such that $\varphi(S_1) = S_2$, then $\operatorname{Cay}(G,S_1)$ and $\operatorname{Cay}(G,S_2)$ are isomorphic.
\end{lemma}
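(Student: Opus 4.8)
The plan is to show that the group automorphism $\varphi$ itself, regarded merely as a bijection of the underlying vertex set $G$, is already the desired graph isomorphism from $\operatorname{Cay}(G,S_1)$ to $\operatorname{Cay}(G,S_2)$. Since $\varphi \in \operatorname{Aut}G$, it is in particular a bijection of $G$, so nothing needs to be checked about the vertex map; the only remaining task is to verify that it both preserves and reflects adjacency.

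The key computation is a one-line translation of the adjacency condition through the homomorphism property. Writing the group additively, as in Definition~\ref{defn:peisert-type}, vertices $g,h$ are adjacent in $\operatorname{Cay}(G,S_1)$ precisely when $g-h \in S_1$. Applying $\varphi$ and using that it is a homomorphism gives $\varphi(g)-\varphi(h) = \varphi(g-h)$. Because $\varphi$ is a bijection, the membership $g-h \in S_1$ holds if and only if $\varphi(g-h) \in \varphi(S_1)$, and by hypothesis $\varphi(S_1) = S_2$. Chaining these equivalences yields
\[
g-h \in S_1 \iff \varphi(g)-\varphi(h) \in S_2,
\]
which is exactly the assertion that $g$ and $h$ are adjacent in $\operatorname{Cay}(G,S_1)$ if and only if $\varphi(g)$ and $\varphi(h)$ are adjacent in $\operatorname{Cay}(G,S_2)$. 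Hence $\varphi$ is a graph isomorphism.

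There is essentially no obstacle here: the statement is just the standard functoriality of the Cayley-graph construction under group automorphisms. The only points requiring a modicum of care are bookkeeping ones, namely using the homomorphism identity in the correct direction and invoking the injectivity of $\varphi$ (rather than merely the image equality $\varphi(S_1)=S_2$) so that non-edges are preserved as well and the equivalence runs in both directions. For a non-abelian $G$ one simply replaces every occurrence of $g-h$ by $gh^{-1}$ and the identical argument applies; moreover, since $\varphi$ is a homomorphism one automatically has $\varphi(-S_1)=-\varphi(S_1)$, so the property $S_2=-S_2$ needed for $\operatorname{Cay}(G,S_2)$ to be a well-defined undirected graph is inherited from $S_1=-S_1$.
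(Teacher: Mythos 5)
Your proposal is correct and is exactly the argument the paper has in mind: the paper's proof is the one-liner ``it follows from the definitions of a Cayley graph and a group automorphism,'' and you have simply spelled out that standard verification that $\varphi$ itself is the graph isomorphism. No discrepancy to report.
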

\begin{proof}
    If follows from the definitions of a Cayley graph and a group automorphism. 
\end{proof}

A $k$-regular graph on $v$ vertices is called \emph{strongly regular} with parameters $(v,k,\lambda,\mu)$, if any two adjacent vertices have $\lambda$ common neighbors and any two distinct non-adjacent vertices have $\mu$ common neighbors. 
If $X$ is a strongly regular graph, then its complement is also a strongly regular
graph. A strongly regular graph $X$ is \emph{primitive} if both $X$ and its complement
are connected. There are a few different approaches to show that Peisert-type graphs are strongly regular. For example, it follows from \cite[Theorem 2]{BWX99}, which is based on an explicit computation involving Gauss sums. We also refer to an elementary proof in \cite[Corollary 5]{AGLY22}.

\begin{lemma}[{\cite[Corollary 5]{AGLY22}}]\label{SRG}
A Peisert-type graph of type $(m,q)$ is strongly regular with parameters
$
(q^2, m(q - 1), (m - 1)(m - 2) + q - 2, m(m - 1))
$
and eigenvalues
$k=m(q-1)$ (with multiplicity $1$), $-m$ (with multiplicity  $q^2-1-k$) and
$q-m$ (with multiplicity $k$). 
\end{lemma}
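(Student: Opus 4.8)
The plan is to diagonalize the adjacency matrix of $X=\operatorname{Cay}(\F_{q^2}^+,S)$ using the additive characters of $\F_{q^2}^+$ and then invoke the standard criterion that a regular graph with exactly three distinct eigenvalues is strongly regular. Fix a nontrivial additive character $\psi$ of $\F_{q^2}$, so that every additive character has the form $\psi_a\colon x\mapsto \psi(ax)$ for a unique $a\in\F_{q^2}$. Because $X$ is a Cayley graph on an abelian group and $S=-S$, these $q^2$ characters form an orthogonal eigenbasis, with $\psi_a$ affording the eigenvalue $\eta_a=\sum_{s\in S}\psi(as)$. Expanding $S$ via \eqref{eq:coset} gives $\eta_a=\sum_{i=1}^m\sum_{t\in\F_q^*}\psi(ac_it)$, so the whole computation reduces to evaluating the inner sum $\sum_{t\in\F_q^*}\psi(bt)$ for $b\in\F_{q^2}$.

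For the key step I would complete the sum over the full subfield: $\sum_{t\in\F_q}\psi(bt)$ is a character sum over the $\F_q$-line $b\F_q$, hence equals $q$ if $\psi$ is trivial on this line and $0$ otherwise. Using the trace tower $\Tr_{\F_{q^2}/\F_p}=\Tr_{\F_q/\F_p}\circ\Tr_{\F_{q^2}/\F_q}$ together with the nondegeneracy of the $\F_q/\F_p$ trace form, one sees that $\psi$ is trivial on $b\F_q$ precisely when $\Tr_{\F_{q^2}/\F_q}(b)=0$. Subtracting the $t=0$ term then yields $\sum_{t\in\F_q^*}\psi(bt)=q-1$ if $\Tr_{\F_{q^2}/\F_q}(b)=0$ and $-1$ otherwise. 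Consequently $\eta_a=N(a)q-m$, where $N(a)$ is the number of indices $i$ with $\Tr_{\F_{q^2}/\F_q}(ac_i)=0$.

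The structural observation that does the real work is that the kernel $K$ of $\Tr_{\F_{q^2}/\F_q}$ is a one-dimensional $\F_q$-subspace, so that $K\setminus\{0\}$ is a single coset of $\F_q^*$ in $\F_{q^2}^*$. Hence, for $a\neq 0$, the condition $\Tr_{\F_{q^2}/\F_q}(ac_i)=0$ forces $c_i$ into the single coset $(a^{-1}K)\setminus\{0\}$; since the $c_i$ represent distinct cosets, $N(a)\in\{0,1\}$ and the corresponding eigenvalue is $q-m$ or $-m$. Only $a=0$ gives $N(0)=m$ and $\eta_0=m(q-1)=k$. For the multiplicities I would note that as $a$ runs over $\F_{q^2}^*$ the coset $(a^{-1}K)\setminus\{0\}$ runs over all $q+1$ cosets of $\F_q^*$, each attained exactly $q-1$ times; thus exactly $m(q-1)=k$ nonzero values of $a$ produce the eigenvalue $q-m$, and the remaining $q^2-1-k$ produce $-m$.

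Finally, since $X$ is $k$-regular with the three distinct eigenvalues $k>q-m>-m$ (for $m\geq2$; when $m=1$ the graph degenerates into a disjoint union of $q$-cliques), it is strongly regular, and the standard relations $\mu=k+rs$ and $\lambda=\mu+(r+s)$ with $r=q-m$ and $s=-m$ give $\mu=m(m-1)$ and $\lambda=(m-1)(m-2)+q-2$, matching the claim. I expect the main obstacle to be the character-sum evaluation in the second paragraph, and specifically the identification of the relative-trace kernel as a single $\F_q^*$-coset, which is exactly what collapses $N(a)$ to the two values $\{0,1\}$ and thereby pins down both the non-principal eigenvalues and their multiplicities; the remaining steps are routine bookkeeping.
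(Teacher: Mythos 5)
Your argument is correct, but it is not the route the paper takes: the paper states this lemma with no proof of its own, importing it from \cite{AGLY22}, where the ``elementary proof'' proceeds combinatorially (counting common neighbours of adjacent and non-adjacent pairs directly from the coset decomposition of $S$, so that $\lambda$ and $\mu$ come first and the eigenvalues are then read off from the standard SRG parameter formulas). You go in the opposite direction: diagonalize the adjacency matrix by additive characters, identify the kernel of $\Tr_{\F_{q^2}/\F_q}$ as $\{0\}$ together with a single $\F_q^*$-coset so that the character sum collapses to $N(a)q-m$ with $N(a)\in\{0,1\}$, count multiplicities, and then recover $\lambda$ and $\mu$ from $\mu=k+rs$ and $\lambda=\mu+r+s$. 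All the steps check out, including the multiplicity count ($a$ contributes the eigenvalue $q-m$ exactly when $a\in c_i^{-1}(K\setminus\{0\})$ for some $i$, and these $m$ sets of size $q-1$ are pairwise disjoint). Two small points of hygiene: the clean statement ``$\psi$ is trivial on $b\F_q$ iff $\Tr_{\F_{q^2}/\F_q}(b)=0$'' holds for the canonical character $x\mapsto e^{2\pi i\Tr_{\F_{q^2}/\F_p}(x)/p}$; for an arbitrary nontrivial $\psi=\psi_0(c\,\cdot)$ the kernel condition becomes $\Tr_{\F_{q^2}/\F_q}(cb)=0$, which is still a one-dimensional $\F_q$-subspace, so nothing breaks, but you should fix the canonical character to say it as written. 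And the appeal to ``regular with three distinct eigenvalues implies strongly regular'' needs connectivity, which for $m\geq 2$ follows from your own computation since $k$ is then attained only at $a=0$; you correctly flag $m=1$ as the degenerate disjoint-union-of-cliques case. The spectral approach buys you the eigenvalues and multiplicities for free (which the paper also needs), at the cost of being less self-contained than the direct count.
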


A clique in a regular graph $X$ is called \emph{regular} if all vertices outside the clique have a constant positive number of neighbors in the clique, and this constant is called the \emph{nexus} for a regular clique. In a strongly regular graph, a clique is regular if and only if its size meets the Delsarte-Hoffman bound; moreover, the nexus of a regular clique is uniquely determined by the parameters of the strongly regular graph \cite[Proposition 1.3.2]{BCN89}. Regular cliques in strongly regular graphs are called \emph{Delsarte cliques}.
Note that in Peisert-type graphs of the same type, all canonical cliques and all non-canonical (if any) are always regular with the same nexus, and the nexus is uniquely determined by the type of a Peisert-type graph.

\subsection{Weight-distribution bound for strongly regular graphs} \label{sec:WDB}

Let $\theta$ be an eigenvalue of a graph $X$. A real-valued function $f$ on the vertex set of $X$ is called an \emph{eigenfunction}
of the graph $X$ corresponding to the eigenvalue $\theta$ (or a \emph{$\theta$-eigenfunction} of $X$), if $f \not \equiv 0$ and
for any vertex $\gamma$ in $X$ the condition
\begin{equation}\label{LocalCondition}
\theta\cdot f(\gamma)=\sum_{\substack{\delta\in{X(\gamma)}}}f(\delta)
\end{equation}
holds, where $X(\gamma)$ is the set of neighbours of the vertex $\gamma$.
We refer to the recent survey \cite{SV21} for a summary of results on the problem of finding the minimum cardinality of support of eigenfunctions of graphs and characterizing the optimal eigenfunctions.

The following lemma gives a lower bound for the number of non-zeroes (i.e., the cardinality of the support) for an eigenfunction of a strongly regular graph, known as the {\em weight-distribution bound}.  It is a special case of a more general result for distance-regular graphs \cite[Section 2.4]{KMP16}.

\begin{lemma}[{\cite[Corollary 2.3]{GSY23}}]\label{WDBsrg}
Let $X$ be a primitive strongly regular graph with non-principal eigenvalues $\theta_1$ and $\theta_2$, such that $\theta_2 < 0 < \theta_1$. Then an eigenfunction of $X$ corresponding to the eigenvalue $\theta_1$ has at least $2(\theta_1+1)$ non-zeroes, and an eigenfunction corresponding to the eigenvalue $\theta_2$ has at least $-2\theta_2$ non-zeroes.
\end{lemma}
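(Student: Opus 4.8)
The plan is to treat the two eigenvalues separately, in both cases passing to the subgraph $H$ induced by the support $U=\{\gamma : f(\gamma)\neq 0\}$ of the given $\theta$-eigenfunction $f$. Two observations drive the argument. First, since $\theta\in\{\theta_1,\theta_2\}$ is non-principal, $f$ is orthogonal to the (Perron) all-ones vector, so $\sum_\gamma f(\gamma)=0$; as $f\not\equiv 0$, it therefore attains both a strictly positive maximum and a strictly negative minimum. Second, restricting the local condition \eqref{LocalCondition} to $U$ and using that $f$ vanishes off $U$, for every $\gamma\in U$ we get $\theta f(\gamma)=\sum_{\delta\in X(\gamma)\cap U}f(\delta)$; that is, $f|_U$ is a $\theta$-eigenvector of the adjacency matrix $A(H)$ with no zero coordinates. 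Thus $\theta$ is an eigenvalue of $H$, and the whole lemma reduces to a spectral estimate on a graph with $|U|$ vertices.

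For $\theta_1>0$ I would run an extremal-vertex argument directly. Let $\gamma_+$ be a vertex where $f$ is maximal and $\gamma_-$ one where $f$ is minimal, so $f(\gamma_+)>0>f(\gamma_-)$. Applying \eqref{LocalCondition} at $\gamma_+$ and discarding the non-positive neighbour values gives $\theta_1 f(\gamma_+)=\sum_{\delta\in X(\gamma_+)}f(\delta)\le \big(\#\{\delta\in X(\gamma_+):f(\delta)>0\}\big)\, f(\gamma_+)$, so $\gamma_+$ has at least $\theta_1$ neighbours on which $f$ is positive. Together with $\gamma_+$ these furnish at least $\theta_1+1$ vertices of the positive support; by the symmetric computation at $\gamma_-$ we obtain at least $\theta_1+1$ vertices of the negative support. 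These two sets are disjoint by sign, whence $|U|\ge 2(\theta_1+1)$.

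For $\theta_2<0$ the same one-vertex argument only controls one sign and collapses, because the positive and negative parts of $f$ can be badly unbalanced (the restriction to $U$ need not be anywhere near symmetric); this asymmetry is the main obstacle. I would instead exploit the spectral reformulation: since $\theta_2$ is an eigenvalue of $H$, we have $\lambda_{\min}(H)\le\theta_2<0$. The crux is the purely graph-theoretic inequality $\lambda_{\min}(H)\ge -|U|/2$, valid for every graph on $|U|$ vertices. To prove it, take a unit eigenvector $u$ for $\lambda_{\min}(H)$, split the vertices into $P=\{i:u_i>0\}$ and $M=\{i:u_i<0\}$, and note that only edges between $P$ and $M$ contribute negatively to $u^{\top}A(H)u=2\sum_{\{i,j\}\in E}u_iu_j$. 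Dropping the non-negative within-part contributions and enlarging the $P$--$M$ edge sum to the full product over $P\times M$ yields $\lambda_{\min}(H)\ge -2\big(\sum_{i\in P}|u_i|\big)\big(\sum_{j\in M}|u_j|\big)$, and two applications of Cauchy--Schwarz together with $|P|\,|M|\le |U|^2/4$ and $\sum_{i\in P}u_i^2+\sum_{j\in M}u_j^2\le 1$ give $\lambda_{\min}(H)\ge -|U|/2$. Combining, $-|U|/2\le\theta_2$, i.e. $|U|\ge -2\theta_2$.

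The only delicate point is this inequality $\lambda_{\min}(H)\ge -|U|/2$; everything else is bookkeeping. I would close by remarking that both bounds are tight, the extremal configuration for $\theta_2$ being a balanced complete bipartite subgraph $K_{t,t}$ with $t=-\theta_2$ carrying the eigenvector equal to $+1$ on one side and $-1$ on the other; this both explains the factor $2$ and anticipates the tightness statements proved later for the extremal Peisert-type graphs. Alternatively, one may simply invoke the distance-regular weight-distribution bound of \cite{KMP16}, of which this lemma is precisely the diameter-two special case.
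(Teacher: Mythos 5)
Your proof is correct, but it is genuinely different from what the paper does: the paper offers no argument at all for this lemma, simply citing \cite[Corollary 2.3]{GSY23}, which in turn is the diameter-two case of the weight-distribution bound of \cite{KMP16} (an argument phrased in terms of local distance distributions of distance-regular graphs). Your two ingredients both check out. For $\theta_1$, the extremal-vertex count at a maximizer and a minimizer of $f$ is valid, and you correctly flag the one non-trivial input it needs, namely that $f$ is orthogonal to the all-ones vector (connectedness of a primitive strongly regular graph) so that both signs occur; without that the argument would only yield $\theta_1+1$. For $\theta_2$, the reduction to the induced subgraph $H$ on the support is sound ($f$ vanishes off $U$, so $f|_U$ is a $\theta_2$-eigenvector of $A(H)$), and the inequality $\lambda_{\min}(H)\ge -|U|/2$ follows exactly as you say: dropping the within-part contributions, enlarging the cross-edge sum to all of $P\times M$, and then Cauchy--Schwarz plus $\sqrt{|P||M|}\le |U|/2$ and AM--GM on $\sum_P u_i^2+\sum_M u_j^2\le 1$ gives $-2\cdot |U|/4$. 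What your route buys is a short, self-contained, purely elementary proof that uses essentially nothing about strong regularity (only regularity and connectedness enter, via orthogonality to the Perron vector), and it transparently exhibits the extremal configurations ($K_{t,t}$ with the $\pm 1$ eigenvector for $\theta_2$, a pair of isolated cliques for $\theta_1$) that the paper later exploits in Lemma~\ref{WDBsrg1} and Section~\ref{sec:eigenfunction}; what the citation route buys is brevity and the broader distance-regular statement.
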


The following lemma gives a combinatorial interpretation of the tightness of the weight-distribution bound in terms of special induced subgraphs; see also \cite[Remark 4.4]{GSY23}.

\begin{lemma}\label{WDBsrg1}
Let $X$ be a primitive strongly regular graph with eigenvalues $\theta_2 < 0 < \theta_1$. Then the following statements hold.\\
{\rm (1)} If $X$ has Delsarte cliques and each edge of $X$ lies in a constant number of Delsarte cliques (for example, $X$ is an edge-transitive strongly regular graph with Delsarte cliques), then there is an one-to-one correspondence between induced complete bipartite subgraphs with parts of size $-\theta_2$ and $\theta_2$-eigenfunctions of $X$ whose cardinality of support meets the weight-distribution bound (up to multiplication by a constant, such a function has value $1$ on the vertices of one part and value $-1$ on the vertices of the other part).\\
{\rm (2)} If the complement of $X$ has Delsarte cliques and each edge of $X$ lies in a constant number of Delsarte cliques (for example, $X$ is a coedge-transitive strongly regular graph whose complement has Delsarte cliques), then there is one-to-one correspondence between induced pairs of isolated cliques of size $-\overline{\theta_2} = \theta_1+1$ and $\theta_1$-eigenfunctions of $X$ whose cardinality of support meets the weight-distribution bound (up to multiplication by a constant, such a function has value $1$ on the vertices of one part and value $-1$ on the vertices of the other part).
\end{lemma}
\begin{proof}
(1) Since $f$ is an eigenfunction whose cardinality of support meets the weight-distribution bound and $X$ admits a Delsarte pair, it follows from 
\cite[Theorem 3(c')]{KMP16} that the support $T$ of $f$ is a complete bipartite subgraph with parts of size $-\theta_2$. It follows from \cite[Theorem 3(a')]{KMP16} that $T$ is an $S$-bitrade, where $S$ is the set of Delsarte cliques. It then follows from \cite[Theorem 1]{KMP16} that $f$ has the structure as required.

(2) Let $\overline{X}$ be the complement of $X$.
The statement follows from (1), the fact that $\theta_1$-eigenspace of $X$ coincides with $\overline{\theta_2}$-eigenspace of $\overline{X}$, where
$\overline{\theta_2} = -(1+\theta_1)$ is the negative eigenvalue of $\overline{X}$.
\end{proof}

Thus, in view of Lemma \ref{WDBsrg1}, to show the tightness of the weight-distribution bound for non-principal eigenvalues, it suffices to find a special induced subgraph (a pair of isolated cliques $T_0$ and $T_1$ or a complete bipartite graph with parts $T_0$ and $T_1$) and verify the condition from Lemma \ref{WDBsrg1}(1) or \ref{WDBsrg1}(2).

Finding the minimum cardinality of support of eigenfunctions of strongly regular graphs (in particular Peisert-type graphs) is of particular interest \cite{GKSV18, GSY23}. 
It is known that the weight-distribution bound is tight for non-principal eigenvalues of Paley graphs of square order \cite{GKSV18}. This result was generalized to generalized Paley graphs in \cite[Theorem 4.3]{GSY23}: if $d \geq 3$ and $d \mid (q+1)$, then the weight-distribution bound is tight for the negative non-principal eigenvalue in the $d$-Paley graph $\mathrm{GP}(q^2,d)=\operatorname{Cay}(\F_{q^2}^+, (\F_{q^2}^*)^d)$. Note that such $d$-Paley graphs are Peisert-type graphs \cite[Lemma 2.13]{AY22}.
We will show that if $q$ is a square, then the weight-distribution bound is tight for non-principal eigenvalues of all extremal graphs defined over $\F_{q^2}$. 

\subsection{Graphs associated with quadratic forms}
Let $V(n,r)$ be an $n$-dimensional vector space over the finite field $\mathbb{F}_r$, where $n \ge 2$ and $r$ is a prime power. Let $f(x_1,x_2,\ldots,x_n): V \to \mathbb{F}_r$ be a quadratic form on $V(n,r)$. Define a graph $Y_f$ on the set of vectors of $V(n,r)$ as follows:
$$
\text{for any~} u,v \in V(n,r), \quad  u \sim v \text{~if and only if~} f(u-v) = 0.
$$
In other words, $Y_f$ is the Cayley graph $\operatorname{Cay}(V(n,r), f^{-1}(0) \setminus \{\textbf{0}\})$.

Two quadratic forms $f_1(x_1,x_2,\ldots,x_n)$ and $f_2(y_1,y_2,\ldots,y_n)$ are said to be \emph{equivalent} if there exists an invertible matrix $B \in \GL(n,r)$ such that $f_1(Bx)=f_2(y)$.

\begin{lemma}\label{EquivalentFormsGiveIsomorphicGraphs}
    Let $f_1$ and $f_2$ be two equivalent quadratic forms. Then the graphs $Y_{f_1}$ and $Y_{f_2}$ are isomorphic.
\end{lemma}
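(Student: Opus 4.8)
The plan is to exhibit an explicit isomorphism directly from the hypothesis. The definition of equivalence gives an invertible matrix $B \in \GL(n,r)$ such that $f_1(Bx) = f_2(x)$ for all $x \in V(n,r)$. The natural candidate for a graph isomorphism is the linear map $\varphi \colon V(n,r) \to V(n,r)$ given by $\varphi(x) = B^{-1}x$, or equivalently $B$ itself in the other direction; I would set things up so that $\varphi$ carries the adjacency of $Y_{f_2}$ to that of $Y_{f_1}$. Since $B$ is invertible, $\varphi$ is a bijection on the vertex set, so it remains only to check that it preserves and reflects adjacency.

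The key computation is to track what happens to $f_1$ under $\varphi$. Recall $u \sim v$ in $Y_{f_2}$ precisely when $f_2(u-v) = 0$, and $u \sim v$ in $Y_{f_1}$ precisely when $f_1(u-v)=0$. Applying the map $x \mapsto Bx$ and using $f_1(Bx) = f_2(x)$, I would verify that for any $u,v$,
\[
f_1(Bu - Bv) = f_1\bigl(B(u-v)\bigr) = f_2(u-v).
\]
Hence $f_2(u-v) = 0$ if and only if $f_1(Bu - Bv) = 0$, which says exactly that $u \sim v$ in $Y_{f_2}$ if and only if $Bu \sim Bv$ in $Y_{f_1}$. Thus the linear bijection $x \mapsto Bx$ is the desired graph isomorphism $Y_{f_2} \to Y_{f_1}$, and since graph isomorphism is symmetric this establishes $Y_{f_1} \cong Y_{f_2}$.

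Alternatively, one can phrase this through Lemma~\ref{lem:Cayley_iso}: since both $Y_{f_1}$ and $Y_{f_2}$ are Cayley graphs on the same additive group $V(n,r)^+$ with connection sets $S_1 = f_1^{-1}(0)\setminus\{\mathbf 0\}$ and $S_2 = f_2^{-1}(0)\setminus\{\mathbf 0\}$, it suffices to produce a group automorphism $\psi$ of $V(n,r)^+$ with $\psi(S_2) = S_1$. The map $\psi(x) = Bx$ is additive and bijective, hence a group automorphism, and the identity $f_1(Bx) = f_2(x)$ shows $x \in S_2 \iff Bx \in S_1$, i.e. $\psi(S_2) = S_1$; the conclusion then follows immediately from Lemma~\ref{lem:Cayley_iso}.

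There is essentially no obstacle here: the only point requiring a word of care is the bookkeeping of which direction the matrix $B$ maps, since $f_1(Bx) = f_2(x)$ pairs $f_1$-values at $Bx$ with $f_2$-values at $x$, so the isomorphism naturally runs from $Y_{f_2}$ to $Y_{f_1}$ (or equivalently one uses $B^{-1}$ to go the other way). Because a quadratic form satisfies $f(\lambda x) = \lambda^2 f(x)$ and in particular $f(\mathbf 0) = 0$, the map $\psi$ fixes $\mathbf 0$ and respects the removal of the zero vector from each connection set, so the restriction to $S_2 \to S_1$ is genuinely a bijection and the argument closes.
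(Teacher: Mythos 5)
Your proposal is correct and is exactly the argument the paper intends: its one-line proof ``the linear change of variables is an isomorphism'' is precisely your map $x \mapsto Bx$, which you verify preserves adjacency via $f_1(B(u-v)) = f_2(u-v)$. The extra care about the direction of $B$ and the removal of $\mathbf{0}$ is sound but not a departure from the paper's approach.
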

\begin{proof}
    The linear change of variables is an isomorphism. 
\end{proof}

\subsection{Affine polar graphs}
Let $V$ be a $(2e)$-dimensional vector space over a finite field $\mathbb{F}_r$, where $e \ge 2$ and $r$ is a prime power,
equipped with the hyperbolic
quadratic form 
$$Q(x) = x_1x_2 + x_3x_4+\ldots+x_{2e-1}x_{2e}.$$
The set $Q^+$ of zeroes of $Q$ is called a non-degenerate \emph{hyperbolic quadric}. Note $e$ is known to be the maximal dimension of a subspace in $Q^+$. A \emph{generator} of $Q^+$ is a maximal totally isotropic subspace in $Q^+$.

Denote by $VO^+(2e,r)$ the graph on $V$ with two vectors $x,y$ being adjacent if and only if $Q(x-y) = 0$. The graph $VO^+(2e,r)$ is known as an \emph{affine polar graph} (see \cite{B1, BV22}).

\begin{lemma}\label{MaximalCliquesInVO}
There is a one-to-one correspondence between generators of $Q^+$ and maximal cliques in $VO^+(2e,r)$ containing the vector $0$. 
\end{lemma}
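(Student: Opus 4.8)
The plan is to establish the claimed bijection between generators of the hyperbolic quadric $Q^+$ and maximal cliques through $0$ in $VO^+(2e,r)$ by showing that each maximal clique containing $0$ is precisely a generator, and conversely. The key observation is that, since $VO^+(2e,r)$ is a Cayley graph on the additive group $V$ with connection set $Q^{-1}(0)\setminus\{\mathbf{0}\}$, adjacency is translation-invariant: two vectors $x,y$ are adjacent if and only if $Q(x-y)=0$. Thus I would first reduce the clique condition to a purely algebraic condition on subsets of $V$.

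First I would show that a subset $C\subseteq V$ containing $0$ is a clique if and only if $Q(x-y)=0$ for all $x,y\in C$; since $0\in C$, taking $y=0$ forces $Q(x)=0$ for every $x\in C$, so $C\subseteq Q^+$. The essential step is to prove that any clique containing $0$ spans a \emph{totally singular} subspace, i.e.\ one contained entirely in $Q^+$. For this I would use the polarization identity associated with $Q$: writing $B(x,y)=Q(x+y)-Q(x)-Q(y)$ for the associated bilinear form, the condition $Q(x-y)=0$ together with $Q(x)=Q(y)=0$ gives $B(x,y)=0$. Hence the vectors in a clique through $0$ are pairwise orthogonal with respect to $B$ and are all singular; a standard argument then shows their linear span is totally singular, so $Q$ vanishes identically on it. This means the span is itself a clique, and maximality forces a maximal clique through $0$ to equal a maximal totally singular subspace, which is by definition a generator of dimension $e$.

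Conversely, I would verify that every generator $U$ (a maximal totally singular subspace) is a maximal clique: since $Q$ vanishes on $U$, for any $x,y\in U$ we have $x-y\in U$ and hence $Q(x-y)=0$, so $U$ is a clique containing $0$; and it is maximal because any vector $v$ adjacent to all of $U$ would, by the orthogonality argument above, extend $U$ to a larger totally singular subspace, contradicting that $U$ is a generator. The two directions together give the one-to-one correspondence, the map sending a generator to itself as a clique and its inverse sending a maximal clique through $0$ to its (totally singular) span.

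The main subtlety I anticipate is the passage from a \emph{pairwise} clique condition to the \emph{global} statement that the entire linear span is totally singular; one must check that $Q$ vanishes not only on the clique vectors but on all their linear combinations, which requires combining the vanishing of $Q$ on each vector with the vanishing of the bilinear form $B$ on each pair and expanding $Q$ of a general linear combination. A minor technical point in characteristic $2$ is that the bilinear form $B$ is alternating and $Q$ is not recovered from $B$ alone, so I would phrase the argument directly in terms of $Q$ and $B=Q(x+y)-Q(x)-Q(y)$ rather than invoking any nondegenerate symmetric bilinear form, ensuring the proof is characteristic-free.
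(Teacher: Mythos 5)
Your proof is correct, but it takes a different route from the paper. The paper simply cites \cite[Lemma 2.10]{EGP19}, which already establishes a bijection between \emph{cosets} of generators and \emph{all} maximal cliques of $VO^+(2e,r)$, and then uses vertex-transitivity to pass to cliques through $0$. You instead give a self-contained algebraic argument: a clique through $0$ consists of singular vectors that are pairwise orthogonal for the polar form $B(x,y)=Q(x+y)-Q(x)-Q(y)$, the expansion $Q\bigl(\sum_i \lambda_i x_i\bigr)=\sum_i \lambda_i^2 Q(x_i)+\sum_{i<j}\lambda_i\lambda_j B(x_i,x_j)$ then shows the span is totally singular, and maximality on both sides identifies maximal cliques through $0$ with maximal totally singular subspaces, i.e.\ generators. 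Your handling of characteristic $2$ (working with $Q$ and $B$ directly rather than a symmetric bilinear form recovering $Q$) is the right precaution and keeps the argument characteristic-free. What the paper's approach buys is brevity by outsourcing the substance to \cite{EGP19}; what yours buys is a complete proof that does not depend on that reference and in effect reproves the relevant special case of the cited lemma. Both are valid.
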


\begin{proof}
By \cite[Lemma 2.10]{EGP19}, there is a one-to-one correspondence between cosets of generators of $Q^+$ and maximal cliques in $VO^+(2e,r)$. Note that the graph $VO^+(2e,r)$ is vertex-transitive, so the lemma follows. 
\end{proof}

\begin{lemma}\label{lem:maximalclique}
The graph $VO^+(4,r)$ has exactly $2(r+1)$ maximal cliques containing zero vector; these are the generators.    
\end{lemma}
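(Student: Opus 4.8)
The plan is to reduce the counting to a statement about the hyperbolic quadric in four variables and then carry out the enumeration in a concrete matrix model. By Lemma~\ref{MaximalCliquesInVO}, the maximal cliques of $VO^+(4,r)$ containing $0$ are in bijection with the generators of $Q^+$, that is, the $2$-dimensional subspaces of $V(4,r)$ on which the hyperbolic form $Q(x)=x_1x_2+x_3x_4$ vanishes identically; since the Witt index is $e=2$, these are precisely the maximal totally singular subspaces. So it suffices to prove there are exactly $2(r+1)$ such subspaces.

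First I would pass to a convenient model. Identifying $V(4,r)\cong M_2(\F_r)$ and reindexing variables, the form $x_1x_2+x_3x_4$ becomes the determinant, since $\det\left(\begin{smallmatrix} a & b\\ c& d\end{smallmatrix}\right)=ad-bc$ agrees with $x_1x_2+x_3x_4$ after a variable permutation (this holds in every characteristic, so no special treatment of characteristic $2$ is needed here). As an invertible linear substitution carries totally singular subspaces bijectively to totally singular subspaces (cf.\ Lemma~\ref{EquivalentFormsGiveIsomorphicGraphs}), counting generators of $Q^+$ is the same as counting the $2$-dimensional subspaces $U\subseteq M_2(\F_r)$ consisting entirely of singular matrices, equivalently of matrices with $\rank\le 1$.

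The heart of the argument is the classification of such $U$. I would use the rank-one factorisation $A=uv^{\top}$ together with the observation that if $A=u_1v_1^{\top}$ and $B=u_2v_2^{\top}$ are linearly independent rank-one matrices with $A+B$ again of rank $\le 1$, then $\{u_1,u_2\}$ or $\{v_1,v_2\}$ must be linearly dependent; otherwise $A+B=[u_1\,u_2]\,[v_1\,v_2]^{\top}$ would have rank $2$. Applying this to a basis of $U$ and then propagating across all linear combinations shows that every qualifying $U$ has a common column space (Type~I: $U=\{uw^{\top}:w\in\F_r^2\}$ for a fixed line $\langle u\rangle$) or a common row space (Type~II). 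Each family is parametrised by a projective point, giving $r+1$ subspaces apiece, and the two families are disjoint because a $2$-dimensional space cannot have all of its members share both a column space and a row space. This yields exactly $2(r+1)$ generators.

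The main obstacle is precisely this classification step: ruling out "mixed" totally singular planes and verifying that the two reguli (Type~I and Type~II) are disjoint and together exhaust all generators. If one prefers to avoid the hands-on argument, the same count follows from the classical fact that the hyperbolic quadric $Q^+(3,r)$ in $\PG(3,r)$ is doubly ruled with two reguli of $r+1$ lines each, or from the general enumeration $2\prod_{i=1}^{e-1}(r^i+1)$ of generators of $O^+(2e,r)$ specialised to $e=2$; I would cite one of these as a cross-check on the explicit count.
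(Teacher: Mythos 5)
Your proposal is correct, but it takes a genuinely different route from the paper. The paper's proof is essentially a citation: after reducing to counting generators of $Q^+$ via Lemma~\ref{MaximalCliquesInVO} (the same first step you take), it identifies the relevant polar space as having rank $2$ and parameter $0$ and reads off the count $2(r+1)$ from the general generator formula in De Beule--De Boeck. You instead give a self-contained elementary classification: passing to the determinant form on $M_2(\F_r)$, showing via the rank-one factorisation $A=uv^{\top}$ that any totally singular plane must consist of rank-$\le 1$ matrices with a common column space or a common row space, and counting the two resulting reguli of $r+1$ lines each. The dichotomy step is sound (if $u_1,u_2$ and $v_1,v_2$ were both independent, $[u_1\,u_2][v_1\,v_2]^{\top}$ would be invertible), and the propagation to the whole plane and the disjointness of the two families are both routine as you describe. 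What your argument buys is an explicit description of the generators as the two reguli of the doubly ruled quadric, which resonates with the canonical/non-canonical clique dichotomy exploited elsewhere in the paper; what the paper's citation buys is brevity and immediate generalisability to $VO^+(2e,r)$. One trivial point: in odd characteristic the passage from $x_1x_2+x_3x_4$ to $ad-bc$ requires a sign change on one variable, not merely a permutation, but since this is still an invertible linear substitution the equivalence you invoke is unaffected.
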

\begin{proof}
In view of Lemma \ref{MaximalCliquesInVO}, it suffices to show the number of generators containing $0$ is $2(r+1)$. Equivalently, we need to count the number of generators of the polar space associated with a nondegenerate hyperbolic quadratic form over $\PG(3,r)$. Such a space has rank $2$ with parameter $0$ (see for example \cite[Table 1]{DD18}). Thus, the lemma follows from the first statement of \cite[Lemma 1.3]{DD18}.
\end{proof}

\begin{lemma}[]\label{lem:equivalent}
All maximal cliques of an affine polar graph $VO^+(4,r)$ are equivalent under the action of the automorphism group.    
\end{lemma}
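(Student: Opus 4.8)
The plan is to exhibit a subgroup of $\operatorname{Aut}(VO^+(4,r))$ that already acts transitively on the set of all maximal cliques; since this subgroup sits inside the full automorphism group, transitivity of the latter follows at once. First I would reduce to maximal cliques through the origin. The graph $VO^+(4,r)$ is a Cayley graph on the additive group of $V$, so every translation $x \mapsto x+v$ is an automorphism, and in particular the graph is vertex-transitive (this is already used in the proof of Lemma~\ref{MaximalCliquesInVO}). Given an arbitrary maximal clique $C$, choosing a vertex $v \in C$ and translating by $-v$ yields a maximal clique through $0$. Hence it suffices to prove that the stabilizer of $0$ in $\operatorname{Aut}(VO^+(4,r))$ acts transitively on the maximal cliques containing $0$, which by Lemma~\ref{MaximalCliquesInVO} and Lemma~\ref{lem:maximalclique} are exactly the $2(r+1)$ generators of the hyperbolic quadric $Q^+$.

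Next I would identify a concrete group of automorphisms fixing $0$, namely the orthogonal group $O^+(4,r)$ of isometries of $Q$. If $g$ preserves $Q$, then $Q(gx-gy)=Q(g(x-y))=Q(x-y)$, so $g$ preserves the adjacency relation $Q(x-y)=0$ and fixes $0$; thus $O^+(4,r)$ is contained in the stabilizer of $0$ in $\operatorname{Aut}(VO^+(4,r))$. It therefore remains to show that $O^+(4,r)$ acts transitively on the set of generators.

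For this I would invoke Witt's extension theorem. The crucial observation is that a generator $U$ is a maximal totally isotropic subspace, so $Q$ restricts to the identically zero form on $U$. Consequently, for any two generators $U_1, U_2$, every linear isomorphism $\phi\colon U_1 \to U_2$ is automatically an isometry of the restricted forms (both being zero), and Witt's theorem extends $\phi$ to an isometry $g \in O^+(4,r)$ of the whole space with $g(U_1)=U_2$. Hence $O^+(4,r)$ is transitive on generators, and the lemma follows.

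The step I expect to require the most care is this last point, because the $2(r+1)$ generators split into two reguli that look structurally different: two generators in the same regulus meet only at $0$, whereas two generators in different reguli share a one-dimensional subspace, which might suggest the two families are inequivalent. The resolution is precisely that Witt's theorem does not detect this distinction: since $Q$ vanishes on every generator, there is no obstruction to mapping a generator of one regulus onto a generator of the other, so that a single application of Witt's theorem merges the two reguli into one orbit under $O^+(4,r)$.
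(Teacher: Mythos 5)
Your proposal is correct and follows essentially the same route as the paper: reduce to maximal cliques through $0$ via translations, identify these with the generators of $Q^+$ by Lemma~\ref{lem:maximalclique}, and then use transitivity of the stabilizer of $0$ on generators. The only difference is that where the paper simply asserts (citing \cite{BV22}) that the projective/orthogonal group is transitive on generators, you justify this step explicitly via Witt's extension theorem applied to totally singular subspaces, correctly noting that this transitivity merges the two reguli into a single orbit.
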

\begin{proof}
Let $C$ be a maximal clique. Since translations are automorphisms, we may assume that $0 \in C$ without loss of generality. By Lemma \ref{lem:maximalclique}, $C$ is a generator.
The subgroup fixing 0 is the projective group and it is
transitive on generators (the generators form a single orbit under the action of the automorphism group of the hyperbolic polar space \cite[Sections 2.3.5, 2.6.5 and 2.6.6]{BV22}), which implies the result.
\end{proof}

\section{The parameters of extremal Peisert-type graphs} \label{parameter}

In this section, we determine the parameters of extremal Peisert-type graphs with the help of the theory of directions.

\subsection{Direction set determined by a Peisert-type graph}
Let $U$ be a point set in $\AG(2,q)$; the \emph{set of directions determined} by $U$ is defined to be
$$
D (U):=  \left\{ [a-c: b-d]:  (a,b), (c,d) \in U,\, (a,b) \neq (c,d) \right\} \subset \PG(1,q).
$$
Let $v \in \F_{q^2} \setminus \F_q$ so that $\{1,v\}$ forms a basis 
of $\F_{q^2}$ over $\F_q$; then we can identify $\F_{q^2}$ and $\AG(2,q)$ via the embedding $\pi \colon \F_{q^2}\to \AG(2,q)$, where 
$$
\pi(a+bv)=(a,b), \quad \forall a,b \in \F_q.
$$
Also define the map $\sigma \colon \AG(2,q)\setminus \{(0,0)\} \to \PG(1,q)$ such that
$$
\sigma((a,b))=[a:b], \quad \forall (a,b) \in \AG(2,q)\setminus \{(0,0)\}.
$$

Let $X$ be a Peisert-type graph $X=\operatorname{Cay}(\F_{q^2}^+,S)$ of type $(m,q)$. Following \cite{Y24}, we define \emph{the direction set determined by $X$} to be $\mathcal{D}(X):=\sigma(\pi(S))$. The following lemma justifies this terminology, in the sense that it converts the ``clique problem" into the ``direction problem".

\begin{lemma}[{\cite[Proposition 4.1]{Y24}}]
\label{lem:correspondence}
Let $X=\operatorname{Cay}(\F_{q^2}^+,S)$ be a Peisert-type graph  and let $C \subset \F_{q^2}$. Then $C$ is a clique in $X$ if and only if $D(\pi(C)) \subset \mathcal{D}(X)$, and $C$ is a canonical clique in $X$ only if $|D(\pi(C))|=1$.
\end{lemma}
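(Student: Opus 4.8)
The plan is to exploit the fact that the connection set $S$ is a union of $\F_q^*$-cosets, so that membership in $S$ can be detected purely from the associated direction in $\PG(1,q)$. The bridge is the composite map $\sigma \circ \pi$, and the first step I would carry out is to pin down its fibers. Since $\pi$ is an $\F_q$-linear isomorphism $\F_{q^2} \to \AG(2,q)$, we have $\pi(\lambda z) = \lambda \pi(z)$ for all $\lambda \in \F_q$ and $z \in \F_{q^2}$; hence for nonzero $z, w$ the equality $\sigma(\pi(z)) = \sigma(\pi(w))$ holds precisely when $\pi(z)$ and $\pi(w)$ are $\F_q^*$-proportional, i.e. when $z \in \F_q^* w$. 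As $S$ is invariant under multiplication by $\F_q^*$ (being a union of $\F_q^*$-cosets), I would deduce the key equivalence: for every $z \in \F_{q^2}^*$,
\[
z \in S \iff \sigma(\pi(z)) \in \sigma(\pi(S)) = \mathcal{D}(X).
\]
The nontrivial implication is ``$\Leftarrow$'': if $\sigma(\pi(z)) = \sigma(\pi(s))$ for some $s \in S$, then $z = \lambda s$ with $\lambda \in \F_q^*$, so $z \in S$ by the coset structure.

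For the first assertion I would simply translate the clique condition through this equivalence. By definition $C$ is a clique if and only if $x - y \in S$ for every pair of distinct $x, y \in C$. Using $\F_q$-linearity, $\pi(x - y) = \pi(x) - \pi(y)$, so the displayed equivalence gives $x - y \in S \iff \sigma(\pi(x) - \pi(y)) \in \mathcal{D}(X)$. As $x, y$ range over distinct elements of $C$, the quantities $\sigma(\pi(x) - \pi(y))$ range over exactly $D(\pi(C))$; hence $C$ is a clique if and only if $D(\pi(C)) \subset \mathcal{D}(X)$.

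For the second assertion I would invoke the explicit form of canonical cliques: such a clique is a translate $C = t + c_i\F_q$ for some index $i$ and some $t \in \F_{q^2}$. Applying $\pi$ and using $\F_q$-linearity, $\pi(C) = \pi(t) + \F_q \cdot \pi(c_i)$ is an affine line in $\AG(2,q)$. Every difference of two of its points is a nonzero $\F_q$-multiple of $\pi(c_i)$, so it determines the single direction $\sigma(\pi(c_i))$; therefore $D(\pi(C)) = \{\sigma(\pi(c_i))\}$ and $|D(\pi(C))| = 1$.

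I expect the only delicate point to be the coset/fiber bookkeeping in the key equivalence, namely verifying that the fibers of $\sigma \circ \pi$ on $\F_{q^2}^*$ are exactly the $\F_q^*$-cosets and that this is compatible with the coset decomposition of $S$; everything else is a direct unwinding of definitions. I would also flag why only the ``only if'' direction is claimed in the second assertion: a proper collinear subset of a canonical clique still satisfies $|D(\pi(C))| = 1$ without being a maximum (hence canonical) clique, so the converse genuinely fails and the statement is correctly phrased as a necessary condition.
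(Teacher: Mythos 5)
Your proof is correct: the identification of the fibers of $\sigma\circ\pi$ on $\F_{q^2}^*$ with the $\F_q^*$-cosets, combined with the coset structure of $S$, gives exactly the equivalence needed, and the two assertions then follow by unwinding the definitions of clique and of canonical clique. The paper does not reproduce a proof (it only cites \cite[Proposition 4.1]{Y23}), but your argument is the standard one behind that result, and your closing remark about why only the ``only if'' direction holds in the second assertion is also accurate.
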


\begin{lemma}\label{lem:induce_isomorphism}
Let $X, Y$ be Peisert-type graphs of type $(m,q)$. If $\mathcal{D}(X)$ and $\mathcal{D}(Y)$ are projectively equivalent, then $X$ and $Y$ are isomorphic. 
\end{lemma}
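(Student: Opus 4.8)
The plan is to produce an automorphism $\varphi$ of the additive group $\F_{q^2}^+$ carrying the connection set $S_X$ of $X$ onto the connection set $S_Y$ of $Y$; once this is in hand, Lemma~\ref{lem:Cayley_iso} yields $X\cong Y$ immediately. The whole argument amounts to translating the projective equivalence of the direction sets into a linear map of the affine plane, and then back into an additive automorphism of the field.

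First I would unwind what $\pi(S_X)$ looks like. Since $X$ has type $(m,q)$, its connection set is a union of $m$ cosets $c_i\F_q^*$. Writing $c=a+bv$, one has $\pi(c\lambda)=\lambda(a,b)$ for every $\lambda\in\F_q^*$, so $\pi(c\F_q^*)$ is the punctured line through the origin in direction $[a:b]=\sigma(\pi(c))$. Hence $\pi(S_X)$ is exactly the union of the $m$ punctured lines through the origin whose directions are the points of $\mathcal{D}(X)$, and likewise for $Y$. In particular the points of $\mathcal{D}(X)$ (resp. $\mathcal{D}(Y)$) are in bijection with the lines through the origin making up $\pi(S_X)$ (resp. $\pi(S_Y)$).

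Next I would invoke the standard identification of the two relevant group actions. The group $\GL(2,q)$ acts $\F_q$-linearly on $\AG(2,q)=\F_q^2$ fixing the origin, hence permutes the lines through the origin; this induced action on the set of lines through the origin is precisely the action of $\PGL(2,q)$ on $\PG(1,q)$, since the kernel of $\GL(2,q)\to\PGL(2,q)$ consists of the scalar matrices, which fix every line through the origin. By hypothesis there is $\bar g\in\PGL(2,q)$ with $\bar g(\mathcal{D}(X))=\mathcal{D}(Y)$; lift it to $g\in\GL(2,q)$. As $g$ is a linear bijection fixing $0$, it carries each punctured line through the origin onto another such line, the directions being matched by $\bar g$; therefore $g(\pi(S_X))=\pi(S_Y)$.

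Finally I would transport $g$ back to the field. Because $\pi$ is an $\F_q$-linear isomorphism of $\F_{q^2}$ onto $\F_q^2$, the map $\varphi:=\pi^{-1}\circ g\circ\pi$ is an $\F_q$-linear bijection of $\F_{q^2}$; in particular it is $\F_p$-linear and hence an automorphism of the additive group $\F_{q^2}^+$. By the previous step $\varphi(S_X)=S_Y$, so Lemma~\ref{lem:Cayley_iso} gives $X\cong Y$. There is no serious obstacle here; the only points requiring care are the bookkeeping that $\pi(S_X)$ consists of \emph{full} punctured lines (so that a linear map sends it onto all of $\pi(S_Y)$ rather than a proper subset) and the verification that an $\F_q$-linear bijection of $\F_{q^2}$ is genuinely an additive automorphism, both of which follow directly from the coset structure of the connection sets and from the inclusion $\F_p\subseteq\F_q$.
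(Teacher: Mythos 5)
Your proof is correct and follows essentially the same route as the paper: the paper's (very terse) proof simply asserts that the connection sets are affinely equivalent and then invokes Lemma~\ref{lem:Cayley_iso}, and your argument is precisely a careful justification of that assertion, lifting the projective equivalence to an element of $\GL(2,q)$ and transporting it through $\pi$ to an additive automorphism of $\F_{q^2}$.
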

\begin{proof}
Let $S$ and $T$ be the connection set of $X$ and $Y$, respectively. Since $\mathcal{D}(X)$ and $\mathcal{D}(Y)$ are projectively equivalent, it follows that $\pi(S)$ and $\pi(T)$ are affinely equivalent. In particular, there is an automorphism $\varphi$ of $\F_{q^2}$ such that $\varphi(S)=T$. Thus, it follows from Lemma~\ref{lem:Cayley_iso} that $X$ and $Y$ are isomorphic.   
\end{proof}

\begin{corollary}\label{cor:3iso}
Given a prime power $q$, all Peisert-type graphs of type $(3,q)$ are isomorphic.
\end{corollary}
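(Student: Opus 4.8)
The plan is to reduce the statement to the (sharp) $3$-transitivity of $\PGL(2,q)$ on $\PG(1,q)$, feeding the resulting projective equivalence into Lemma~\ref{lem:induce_isomorphism}. The whole argument is therefore a direction-set computation.

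First I would record that each $\F_q^*$-coset appearing in the connection set contributes exactly one point to the direction set. Writing $c_i = a_i + b_i v$, every element $c_i \lambda$ with $\lambda \in \F_q^*$ maps under $\pi$ to $(a_i\lambda, b_i\lambda)$ and hence under $\sigma$ to $[a_i\lambda : b_i\lambda] = [a_i : b_i]$, a single fixed point of $\PG(1,q)$. Moreover, distinct cosets of $\F_q^*$ give distinct points, since the cosets of $\F_q^*$ in $\F_{q^2}^*$ are in natural bijection with the $q+1$ points of $\PG(1,q)$. Consequently, if $X$ is a Peisert-type graph of type $(3,q)$, then $\mathcal{D}(X) = \sigma(\pi(S))$ is precisely a set of three distinct points of $\PG(1,q)$ (note $q \geq 3$ is forced for such a graph to exist, so there is room for three directions).

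Next I would invoke the classical fact that $\PGL(2,q)$ acts sharply $3$-transitively on $\PG(1,q)$; in particular it is transitive on unordered triples of distinct points. Thus, given two Peisert-type graphs $X$ and $Y$ of type $(3,q)$, the $3$-element sets $\mathcal{D}(X)$ and $\mathcal{D}(Y)$ lie in a single orbit, i.e.\ they are projectively equivalent. Applying Lemma~\ref{lem:induce_isomorphism} to this projective equivalence then yields $X \cong Y$, which is exactly the corollary.

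I do not expect any real obstacle. The only point needing a little care is verifying that three \emph{distinct} cosets really produce three \emph{distinct} directions, so that $\mathcal{D}(X)$ has full size $3$ and the $3$-transitivity actually applies; this follows from the coset–point bijection noted above. (The analogous statement fails for larger $m$: since $\PGL(2,q)$ is only $3$-transitive, $m$-element subsets of $\PG(1,q)$ with $m \geq 4$ need not be projectively equivalent, which is consistent with the non-uniqueness phenomena described later in the introduction.)
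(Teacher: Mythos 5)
Your proposal is correct and is essentially the paper's own argument: the paper likewise observes that $\PGL(2,q)$ acts $3$-transitively on $\PG(1,q)$ and concludes via Lemma~\ref{lem:induce_isomorphism}. Your extra verification that the three cosets yield three distinct directions is a detail the paper leaves implicit but does not change the route.
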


\begin{proof}
Recall that $\PGL(2,q)$ acts naturally on $\PG(1,q)$ by right multiplication as follows:
$$A = \begin{pmatrix} a &b \\ c &d \end{pmatrix} \mbox{ sends } [u_1: u_2] \mbox{ to } [au_1+bu_2: cu_1+du_2].$$ 
It is well known that this action is $3$-transitive (see for example \cite[p. 245]{DM96}). Therefore, by Lemma~\ref{lem:induce_isomorphism}, all Peisert-type graphs of type $(3,q)$ are isomorphic.
\end{proof}

The following lemma is classical in the theory of directions; see also \cite[Theorem 2.4]{Y24}.
\begin{lemma}\label{lem:p}
Let $U$ be a subset of $\AG(2,p)$ with $|U|=p$. 

(1) (R\'{e}dei and Megyesi \cite{R73}) If the points in $U$ are not all collinear, then $U$ determines at least $\frac{p+3}{2}$ directions.

(2) (Lov\'{a}sz and Schrijver \cite{LS83}) $U$ determines exactly $(p+3) / 2$ directions if and only if $U$ is affinely equivalent to the set $\{(x,x^{(p+1)/2}): x \in \F_p\}$ . 
\end{lemma}

Let $K$ be a subfield of $\F_q$. We say $U \subset \AG(2,q)$ is {\em $K$-linear} if there exists $\alpha, \beta \in \F_{q^2}$ that are linearly independent over $\F_q$, such that $W=\{\alpha x+\beta y: (x,y) \in U\}$ forms a $K$-subspace in $\F_{q^2}$. The following result, due to Blokhuis, Ball, Brouwer, Storme, and Sz{\H{o}}nyi \cite{B03, BBBSS}, predicts the algebraic structure of $U$ based on the size of $|D(U)|$.

\begin{lemma}[\cite{B03, BBBSS}]\label{lem:Ball}
Let $U \subset \AG(2,q)$ be a point set with $q$ points such that $(0,0) \in U$ and $N=|D(U)|< \frac{q}{2}+1$. Then there is a subfield $K$ of $\F_q$ such that $U$ is $K$-linear. Moreover, if $K$ is the largest subfield over which $U$ is $K$-linear and $K \neq \F_q$, then $N \geq q/|K|+1$.
\end{lemma}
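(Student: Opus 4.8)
The plan is to put $U$ into the classical form of the graph of a function and then run the Rédei-polynomial argument. Since $N=|D(U)|<q/2+1<q+1$, the set $U$ fails to determine at least one direction; applying a linear transformation $A\in\GL(2,q)$ (which fixes the origin, induces the $\PGL(2,q)$-action on $\PG(1,q)$, and preserves both $K$-linearity and the value $N$), I may assume the vertical direction $[0:1]$ is undetermined. Then no two points of $U$ share an $x$-coordinate, so $U=\{(x,f(x)):x\in\F_q\}$ for a function $f\colon\F_q\to\F_q$, with $f(0)=0$ since $(0,0)\in U$. In this language, $U$ being $K$-linear is exactly the statement that $f$ is an $\F_{p^e}$-linear map for the subfield $K=\F_{p^e}$.

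First I would form the Rédei polynomial
\[
R(X,Y)=\prod_{x\in\F_q}\bigl(X+xY-f(x)\bigr)=\sum_{j=0}^{q}\sigma_j(Y)\,X^{q-j},
\]
where $\sigma_j(Y)$ is the $j$-th elementary symmetric polynomial of $\{f(x)-xY:x\in\F_q\}$, so that $\deg_Y\sigma_j\le j$. The key observation is that a finite slope $m$ is undetermined if and only if $x\mapsto f(x)-mx$ is a bijection of $\F_q$, equivalently $R(X,m)=X^q-X$. Under our normalization there are exactly $q-N$ undetermined finite slopes, and at each of them $\sigma_j(m)=0$ for $1\le j\le q-2$. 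Since $\deg_Y\sigma_j\le j$, any $\sigma_j$ with $j\le q-N-1$ has more roots than its degree and hence vanishes identically. This yields the crucial gap
\[
R(X,Y)=X^q+\sum_{j=q-N}^{q}\sigma_j(Y)\,X^{q-j},
\]
so that the coefficients of $X^{q-1},\dots,X^{N+1}$ are all zero.

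The main work, and the step I expect to be the genuine obstacle, is to promote this lacunary structure to field-linearity of $f$. The idea is that at each determined direction the gap forces $R(X,m)$ to be a lacunary polynomial whose factorization into repeated factors is controlled by a single subfield: one analyses the algebraic curve $R(X,Y)=0$ together with the multiplicities of its points, and combines the theory of lacunary (linearized) polynomials over finite fields with a point count for algebraic curves — the Hasse–Weil bound in the original approach, sharpened to the Stöhr–Voloch bound in the treatments of \cite{BBBSS} and \cite{B03}. The outcome is that there is a power $t=p^e>1$ with every surviving $\sigma_j$ a polynomial in $Y^{t}$ and with the difference quotients $(f(x)-f(y))/(x-y)$ taking at most $(q-1)/(t-1)$ distinct values; from this one extracts $f(x+y)=f(x)+f(y)$ and $f(\lambda x)=\lambda f(x)$ for all $\lambda\in\F_{p^e}$, i.e.\ $f$ is $\F_{p^e}$-linear. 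There is no elementary shortcut here: this is precisely the content of the Rédei--Megyesi theorem in the prime case and of the deep analysis in \cite{BBBSS, B03} in general.

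Finally, translating back, $f$ being $\F_{p^e}$-linear means the graph $\{(x,f(x))\}$, hence $U$, is $K$-linear with $K=\F_{p^e}$, which gives the first assertion. For the quantitative part, take $K$ maximal with $U$ being $K$-linear and assume $K\ne\F_q$. Then the associated set is a $K$-subspace of $\F_{q^2}$ of size $q$ that is not $\F_q$-linear, and the dichotomy of \cite{BBBSS, B03} leaves exactly two possibilities for the number of determined directions: either $N\ge (q+3)/2$, or $q/|K|+1\le N\le (q-1)/(|K|-1)$. The hypothesis $N<q/2+1<(q+3)/2$ rules out the first, so $N\ge q/|K|+1$, as claimed.
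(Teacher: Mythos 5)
Your proposal is correct and follows essentially the same route as the paper: the paper's proof consists precisely of the normalization you perform first (an affine change of coordinates making $U$ the graph of a function $f$ with $f(0)=0$) followed by an appeal to the main theorem of \cite{B03}. Your additional sketch of the R\'edei-polynomial mechanism and of the trichotomy in \cite{B03, BBBSS} is accurate, but since you (rightly) defer the genuinely hard step to those references, the substance of your argument coincides with the paper's.
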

\begin{proof}
This is a simplified version of the main theorem in \cite{B03}. Note that the main theorem in \cite{B03} assumes that $U$ is the graph of the function $f\colon \F_q \to \F_q$ such that $f(0)=0$. However, it is well known that any set of $q$ points in $\AG(2,q)$ that does not determine all the directions are affinely equivalent to the graph of a function (see for example \cite[page 342]{B03}).
\end{proof}

Next, we combine Lemma~\ref{lem:correspondence} and Lemma~\ref{lem:Ball} to deduce a useful proposition, which is crucial in the proof of our main results. This proposition is implicit in \cite{AY22} and could be used to deduce the strict-EKR property of a family of Peisert-type graphs. For the sake of completeness, we include a short proof.

\begin{proposition}\label{prop:subspace}
Let $q=p^n$ with $n>1$. Let $k$ be the largest proper divisor of $n$. We have the following:
\begin{itemize}
    \item Any Peisert-type graph of type $(m,q)$ has the strict-EKR property provided that 
$m \leq p^{n-k}$.
    \item If $p=2$, additionally assume that $n$ is not a prime; if $p\geq 3$, this assumption is not needed. Then in a Peisert-type graph of type $(p^{n-k}+1,q)$, each maximum clique containing $0$ is an $\F_{p^k}$-subspace in $\F_{q^2}$.  
\end{itemize}
\end{proposition}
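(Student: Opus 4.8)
The plan is to translate the clique problem into the direction problem via Lemma~\ref{lem:correspondence} and then invoke the Blokhuis--Ball--Brouwer--Storme--Sz{\H{o}}nyi structure theorem (Lemma~\ref{lem:Ball}). Throughout, let $C$ be a maximum clique containing $0$; since translation preserves both maximum cliques and canonicity, this loses no generality. Put $U=\pi(C)\subset\AG(2,q)$, a set of $q$ points with $(0,0)\in U$, and let $N=|D(U)|$. Because $X$ has type $(m,q)$, each coset $c_i\F_q^*$ is a punctured $\F_q$-line through the origin, so all of its points share one direction and distinct cosets give distinct directions; hence $\mathcal{D}(X)=\sigma(\pi(S))$ has exactly $m$ elements. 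Lemma~\ref{lem:correspondence} then yields $D(U)\subseteq\mathcal{D}(X)$, so $N\le m$. The entire argument hinges on comparing $N$ with the threshold $q/|K|+1$ supplied by Lemma~\ref{lem:Ball}, where $K$ is the largest subfield over which $U$ is linear.

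For the first item I would show $N=1$. Here $m\le p^{n-k}=q/p^k\le q/2<q/2+1$ (since $p^k\ge 2$), so the hypothesis of Lemma~\ref{lem:Ball} holds and $U$ is $K$-linear. If $K\ne\F_q$, then $K$ is a proper subfield of $\F_{p^n}$, so $|K|\le p^k$ because $k$ is the largest proper divisor of $n$; the moreover clause then forces $N\ge q/|K|+1\ge p^{n-k}+1>m\ge N$, a contradiction. Hence $K=\F_q$, so $U$ is $\F_q$-linear, and being a set of $q$ points it is the image under an $\F_q$-linear bijection of a $1$-dimensional $\F_q$-subspace, i.e. a line through the origin; thus $N=1$. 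A maximum clique determining a single direction is a full $\F_q$-line $c\F_q$, and since it is a clique we must have $c\F_q^*\subseteq S$, so $C$ is one of the canonical cliques. Therefore $X$ has the strict-EKR property.

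For the second item, set $m=p^{n-k}+1$. The numerical input I need is $N\le m<q/2+1$, i.e. $q/p^k<q/2$, i.e. $p^k>2$; this holds automatically when $p$ is odd (then $p^k\ge 3$) and, when $p=2$, exactly when $k\ge 2$, which is the hypothesis that $n$ is not prime (the largest proper divisor of a prime being $1$). With this, Lemma~\ref{lem:Ball} again gives that $U$ is $K$-linear for the largest such $K$. If $K\ne\F_q$, the moreover clause yields $q/|K|+1\le N\le q/p^k+1$, so $|K|\ge p^k$, while $|K|\le p^k$ as before; hence $K=\F_{p^k}$. In either case $\F_{p^k}\subseteq K$ (using $k\mid n$ when $K=\F_q$), so the witnessing set $W=\{\alpha x+\beta y:(x,y)\in U\}$ is in particular an $\F_{p^k}$-subspace, i.e. $U$ is $\F_{p^k}$-linear. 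Composing $\pi$ with the $\F_q$-linear bijection $(x,y)\mapsto\alpha x+\beta y$ produces an $\F_q$-linear (hence $\F_{p^k}$-linear) bijection $\Phi\colon\F_{q^2}\to\F_{q^2}$ with $\Phi(C)=W$; since $W$ is an $\F_{p^k}$-subspace and $\Phi^{-1}$ is $\F_{p^k}$-linear, $C=\Phi^{-1}(W)$ is an $\F_{p^k}$-subspace of $\F_{q^2}$, as required.

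The only genuine subtlety is the bookkeeping around the subfield lattice and the numerical thresholds: one must check that the bound $N\le m$ is strong enough to exclude every proper subfield strictly smaller than $\F_{p^k}$ (pinning $K$ to $\F_q$ or $\F_{p^k}$) and that the running hypothesis $N<q/2+1$ of Lemma~\ref{lem:Ball} actually holds — this is precisely where the characteristic-two exception is forced. By contrast, the passage from ``$U$ is $\F_{p^k}$-linear'' to ``$C$ is an $\F_{p^k}$-subspace'' is routine linear algebra once the identifications $\pi$ and $(x,y)\mapsto\alpha x+\beta y$ are tracked, as both are $\F_q$-linear and therefore $\F_{p^k}$-linear.
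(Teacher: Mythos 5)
Your proof is correct and follows essentially the same route as the paper's: translate the clique into a direction set via Lemma~\ref{lem:correspondence}, apply Lemma~\ref{lem:Ball}, and use the fact that every proper subfield of $\F_{p^n}$ has order at most $p^k$ to pin down $K$, with the same numerical check $N < q/2+1$ forcing exactly the characteristic-two exception. The only differences are cosmetic: you argue directly rather than by contradiction, and you spell out the routine passage from the $\F_{p^k}$-linearity of $\pi(C)$ to $C$ being an $\F_{p^k}$-subspace, which the paper leaves implicit.
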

\begin{proof}
Let $X$ be a Peisert-type graph of type $(m,q)$, where $m \leq p^{n-k}+1$. We claim that $m<\frac{q}{2}+1$. Indeed, if $p \geq 3$, then clearly $m\leq \frac{q}{p}+1<\frac{q}{2}+1$; if $p=2$, then $k \geq 2$ and thus $m\leq \frac{q}{4}+1<\frac{q}{2}+1$.

Suppose $X$ fails to have the strict-EKR property. Let $C$ be a non-canonical clique with $0 \in C$. Since $C$ is a non-canonical clique, Lemma~\ref{lem:correspondence} implies that $D(\pi(C)) \subset \mathcal{D}(X)$ and $|D(\pi(C))|>1$. In view of Lemma~\ref{lem:Ball}, $\pi(C)$ is $K$-linear for some subfield $K$ of $\F_q$ and we take $K$ to be the largest such subfield. Since $C$ is non-canonical, we have $K \neq \F_q$. Thus $|K| \leq p^k$ by the assumption that $k$ is the largest proper divisor of $n$. Lemma~\ref{lem:Ball} then implies that 
\begin{equation}\label{eq:Ball}
m=|\mathcal{D}(X)| \geq  |D(\pi(C))| \geq q/|K|+1 \geq p^{n-k}+1.  
\end{equation}
Thus, $m=p^{n-k}+1$ and we must have $K=\F_{p^k}$, equivalently, $C$ is an $\F_{p^k}$-subspace. This proves the two statements of the proposition.
\end{proof}

\subsection{Proof of Theorem~\ref{thm:thm1}}\label{sec:thm1}

In this section, we determine the parameters of extremal Peisert-type graphs and prove Theorem~\ref{thm:thm1}.

\begin{proof}[Proof of Theorem~\ref{thm:thm1}]
First, we consider the case $n=1$, that is, $q=p$ is a prime. Let $X$ be of type $(m,p)$.

We begin by showing that $m \leq \frac{p+3}{2}$ by constructing a Peisert-type graph of type $(\frac{p+3}{2},p)$ without the strict-EKR property.  Let $H$ be the set of non-zero squares in $\F_p$, and let $u \in \F_{p^2} \setminus \F_p$. Consider the Peisert-type graph $Y$    defined over $\F_{p^2}$ with connection set $$S=\F_p^* \cup u\F_p^* \cup \cup_{h \in H} (u-h)\F_p^*.$$ Since $|H|=\frac{p-1}{2}$, $Y$ is of type $(\frac{p+3}{2},p)$. Based on the connection set of $Y$, it is easy to verify that $H \cup uH \cup \{0\}$ is a non-canonical clique in $Y$, so $Y$ does not have the strict-EKR property.
    
Now, let $C$ be a non-canonical clique in $X$ and let $U=\pi(C)$. By Lemma~\ref{lem:correspondence}, $|D(U)|>1$. In particular, the $p$ points in $U$ are not collinear and thus we have $m=|\mathcal{D}(X)| \geq |D(U)|\geq \frac{p+3}{2}$ by Lemma~\ref{lem:correspondence} and by Lemma~\ref{lem:p} (1). Therefore, $m=\frac{p+3}{2}$. Moreover, we must also have $\mathcal{D}(X)=D(U)$ by Lemma~\ref{lem:correspondence}, and thus $U$ is affinely equivalent to the set $U_0=\{(x,x^{(p+1)/2}): x \in \F_p\}$ by Lemma~\ref{lem:p} (2).  Let $X'$ be another Peisert-type graph of type $(\frac{p+3}{2},p)$ without the strict-EKR property. Let $C'$ be a non-canonical clique in $X'$ and let $U'=\pi(C')$. Then by the same argument, $\mathcal{D}(X')=D(U')$ and $U'$ is affinely equivalent to $U_0$. It follows that $D(U)$ and $D(U')$ are projectively equivalent, that is,  $\mathcal{D}(X)$ and $\mathcal{D}(X')$ are projectively equivalent. Now Lemma~\ref{lem:induce_isomorphism} implies that $X$ and $X'$ are isomorphic, establishing the uniqueness of the extremal graph. 

Next, we discuss the case where $n>1$. Let $d=n/k$. In view of Proposition~\ref{prop:subspace}, it suffices to construct a Peisert-type graph of type $(p^{n-k}+1,q)$ without the strict-EKR property.  Let $U \subset \F_q$ be a $(d-1)$-dimensional subspace over $\F_{p^k}$. And let $x \in \F_{q^2} \setminus \F_q$. Consider the Peisert-type graph $X=\operatorname{Cay}(\F_{q^2}^+,S)$, where
$$
S=\F_q^* \sqcup \bigsqcup_{u \in U} (u+x)\F_q^*.
$$
Next, we justify that the above union is disjoint, which implies that $X$ is of type $(p^{n-k}+1,q)$. It is clear that $\F_q^* \neq (u+x)\F_q^*$ for any $u \in U$. Suppose that $u,u' \in U$ such that $(u+x)\F_q^*=(u'+x)\F_q^*$. Then there is $y \in \F_q^*$, such that $u+x=y(u'+x)$, which implies that $u-yu'=(y-1)x$. Note that $u-yu' \in \F_q$ and $x \notin \F_q$. Thus, we must have $y=1$ and thus $u=u'$. 

It remains to show that $X$ does not have the strict-EKR property. Let $V=U \oplus x\F_{p^k}$. Note that $S=(V\setminus \{0\})\F_q^*$. In particular, we have $V-V=V \subset S \cup \{0\}$ and thus $V$ is a non-canonical clique in $X$.
\end{proof}

\begin{remark}\label{rem:direction}
In view of Lemma~\ref{lem:correspondence} and the above proof of Theorem~\ref{thm:thm1}, our main results (to be proved in later sections) imply an analogue of Lemma~\ref{lem:p} (2) when $q$ is a square or a cube (instead of a prime). In other words, our main results strengthen Lemma \ref{lem:Ball} due to Blokhuis, Ball, Brouwer, Storme, and Sz{\H{o}}nyi \cite{B03, BBBSS} in the sense that we classify non-collinear sets $U \subset \AG(2,q)$ with $q$ points that determines the minimum number of directions when $q$ is a square or a cube, as well as such direction sets. While the theory of directions provides useful tools towards the study of the strict-EKR property of Peisert-type graphs, typically additional ingredients and insights are needed for the latter.
\end{remark}

\subsection{A non-canonical clique uniquely determines an extremal graph}

\begin{corollary}\label{cor:S}
Let $q=p^n \geq 3$ be a prime power such that $n>1$. If $n$ is a prime, further assume that $p>2$. Let $X$ be an extremal Peisert-type graph defined over $\F_{q^2}$. Then a non-canonical clique $V$ containing $0$ determines the connection set $S$ of $X$ by $S=(V\setminus \{0\})\F_q^*$.    
\end{corollary}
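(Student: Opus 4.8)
The plan is to combine the structural description of the non-canonical clique supplied by Proposition~\ref{prop:subspace} with a short counting argument based on the coset/direction correspondence. First I would record the type of $X$: by Theorem~\ref{thm:thm1}, since $n>1$, the graph $X$ is of type $(m,q)$ with $m=p^{n-k}+1$, where $k$ is the largest proper divisor of $n$. The hypothesis of the corollary (if $n$ is prime then $p>2$) is precisely the condition ``not ($p=2$ and $n$ prime)'', which is what lets me invoke the second bullet of Proposition~\ref{prop:subspace}. That bullet guarantees that the non-canonical maximum clique $V$ containing $0$ is an $\F_{p^k}$-subspace of $\F_{q^2}$ of size $q$.

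The easy inclusion is $(V\setminus\{0\})\F_q^*\subseteq S$. Since $0\in V$ and $V$ is a clique, every nonzero $v\in V$ satisfies $v=v-0\in S$, so $V\setminus\{0\}\subseteq S$; because $S$ is a union of $\F_q^*$-cosets it is invariant under multiplication by $\F_q^*$, whence $(V\setminus\{0\})\F_q^*\subseteq S\F_q^*=S$. For the reverse inclusion I would argue by counting cosets: both sets are unions of $\F_q^*$-cosets, so it suffices to show they consist of the same number of cosets. By Lemma~\ref{SRG} we have $|S|=m(q-1)$, i.e.\ $S$ is a union of exactly $m$ such cosets.

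It remains to count the cosets in $(V\setminus\{0\})\F_q^*$. Here I would use that $\sigma\circ\pi$ identifies $\F_{q^2}^*/\F_q^*$ with $\PG(1,q)$, so the number of distinct $\F_q^*$-cosets meeting $V\setminus\{0\}$ equals $|\sigma(\pi(V\setminus\{0\}))|$; and since $V$ is an $\F_{p^k}$-subspace (in particular $V-V=V$ and $0\in V$), this in turn equals $|D(\pi(V))|$, the number of directions determined by $V$. Finally, $|D(\pi(V))|=m$ is exactly the equality case already extracted in the proof of Proposition~\ref{prop:subspace}: the inclusion $D(\pi(V))\subseteq\mathcal{D}(X)$ gives $|D(\pi(V))|\le m$, while Lemma~\ref{lem:Ball} applied to the $\F_{p^k}$-linear set $\pi(V)$ (whose largest field of linearity is a proper subfield, of order at most $p^k$) gives $|D(\pi(V))|\ge q/p^k+1=m$. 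Thus $(V\setminus\{0\})\F_q^*$ is a union of exactly $m$ cosets, and together with the inclusion above this forces $(V\setminus\{0\})\F_q^*=S$.

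The only point requiring genuine care is the bookkeeping that identifies the three a priori distinct counts---the number of $\F_q^*$-cosets contained in $(V\setminus\{0\})\F_q^*$, the cardinality $|\sigma(\pi(V\setminus\{0\}))|$, and the number of directions $|D(\pi(V))|$---and then pins this common value to $m$ via the equality case of the Blokhuis--Ball direction theorem. Everything else is a direct application of results established earlier in the paper, so I expect no additional obstacle beyond verifying that the hypotheses of Proposition~\ref{prop:subspace} are met and that $V$ being a subspace makes the subtraction step in passing from $\sigma(\pi(V\setminus\{0\}))$ to $D(\pi(V))$ exact.
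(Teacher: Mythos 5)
Your proof is correct, and you handle the hypotheses properly: the parity condition is exactly what is needed to invoke the second bullet of Proposition~\ref{prop:subspace}, and the bound $m<q/2+1$ required by Lemma~\ref{lem:Ball} does hold in both the $p\ge 3$ and the ($p=2$, $n$ composite) cases. The only difference from the paper is how the reverse inclusion is closed. Having established $S':=(V\setminus\{0\})\F_q^*\subseteq S$ exactly as you do, the paper argues by contradiction: if $S'\neq S$, then $\operatorname{Cay}(\F_{q^2}^+,S')$ is a Peisert-type graph of type $(m',q)$ with $m'\le p^{n-k}$, so by the first bullet of Proposition~\ref{prop:subspace} it has the strict-EKR property, contradicting the fact that the subspace $V$ is a non-canonical clique in it. You instead count directly: $(V\setminus\{0\})\F_q^*$ is a union of exactly $|D(\pi(V))|$ cosets, and the equality case of the Blokhuis--Ball bound pins $|D(\pi(V))|$ to $m$, matching the $m$ cosets of $S$. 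Since the first bullet of Proposition~\ref{prop:subspace} is itself deduced from Lemma~\ref{lem:Ball}, the two arguments coincide at bottom; yours opens up the black box (at the cost of re-verifying the hypotheses of the direction theorem and the identification of coset counts with direction counts, including the observation that non-canonical forces the field of linearity to be a proper subfield), while the paper's contradiction is slightly leaner because it never needs the exact number of cosets in $S'$, only that it is at most $p^{n-k}$.
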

\begin{proof}
Let $k$ be the largest proper divisor of $n$. Then Theorem~\ref{thm:thm1} implies that $X$ is of type $(p^{n-k+1},q)$. Let $V$ be a non-canonical clique in $X$ such that $0 \in V$ and let $S$ be the connection set of $X$. It follows that $V \setminus \{0\} \subset S$. Moreover, since $S$ is a union of $\F_q^*$-cosets, we have $S':=(V\setminus \{0\})\F_q^* \subset S$. Suppose that $S' \neq S$. Consider the Peisert-type graph $X'=\operatorname{Cay}(\F_{q^2}^+,S')$; then $X'$ is of type $(m,q)$ for some $m \leq p^{n-k}$. However, note that Proposition~\ref{prop:subspace} implies that $X'$ has the strict-EKR property, contradicting the fact that $V$ is a non-canonical clique in $X'$.
\end{proof}

When $q$ is a square, the following proposition follows from Proposition~\ref{prop:subspace} and Corollary~\ref{cor:S}.

\begin{proposition}\label{prop:2}
Let $q$ be a square such that $q \geq 9$.  Let $X$ be an extremal Peisert-type graph defined over $\F_{q^2}$. Then the connection set $S$ of $X$ is of the form $S=(a \F_{\sqrt{q}} \oplus b \F_{\sqrt{q}}\setminus \{0\})\F_q^*$, where $a, b \in \F_{q^2}$ are linearly independent over $\F_q$. 
\end{proposition}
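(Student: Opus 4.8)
The plan is to combine the structural results already established for $n>1$ with the case $n=2$ (so $q$ is a square with $\sqrt{q}=p$ when $q=p^2$, or more generally $q$ a perfect square) and then translate the subspace structure into the explicit form of the connection set. Since $q$ is a square, write $q = (\sqrt{q})^2$; the relevant exponent is the largest proper divisor of the integer $n$ for which $q = p^n$, but the cleanest route is to apply Proposition~\ref{prop:subspace} and Corollary~\ref{cor:S} with the observation that $\sqrt{q}$ is the subfield $\F_{\sqrt{q}}$ sitting inside $\F_q$. First I would invoke Theorem~\ref{thm:thm1} (or directly Proposition~\ref{prop:subspace}) to conclude that an extremal graph over $\F_{q^2}$ is of type $(\sqrt{q}+1,q)$ and that each maximum clique $V$ containing $0$ is an $\F_{\sqrt{q}}$-subspace of $\F_{q^2}$.

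Next I would pin down the dimension of $V$. Since $X$ has clique number $q$ (by Lemma~\ref{SRG} and the Delsarte--Hoffman bound), the clique $V$ has exactly $q = (\sqrt{q})^2$ elements, so $V$ is a $2$-dimensional vector space over $\F_{\sqrt{q}}$. Hence there exist $a, b \in \F_{q^2}$, linearly independent over $\F_{\sqrt{q}}$, such that $V = a\F_{\sqrt{q}} \oplus b\F_{\sqrt{q}}$. I would then check that $a$ and $b$ are in fact linearly independent over $\F_q$: if they were $\F_q$-dependent, then $V$ would be contained in a single $\F_q$-line through $0$, making $V$ a canonical clique by Lemma~\ref{lem:correspondence} (as $|D(\pi(V))| = 1$), contradicting the choice of $V$ as non-canonical. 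This is the one genuinely new verification in the argument.

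Finally I would apply Corollary~\ref{cor:S}, whose hypotheses are met since $n>1$ and (when $n$ is prime) the only square prime power forces $p>2$ anyway in the relevant range $q \geq 9$; Corollary~\ref{cor:S} gives $S = (V \setminus \{0\})\F_q^*$, and substituting $V = a\F_{\sqrt{q}} \oplus b\F_{\sqrt{q}}$ yields exactly the stated form
$$
S = \bigl(a\F_{\sqrt{q}} \oplus b\F_{\sqrt{q}} \setminus \{0\}\bigr)\F_q^*,
$$
with $a,b$ linearly independent over $\F_q$. The hard part is confirming that the hypotheses of Corollary~\ref{cor:S} are legitimately satisfied for every square $q \geq 9$ (handling the edge cases $q = p^2$ with $p=2$, i.e.\ $q=4$, excluded by $q \geq 9$, and $q = p^2$ with $n=2$ prime requiring $p>2$, which holds once $q\geq 9$), together with the independence check in the previous paragraph; both are short but need care to state correctly, whereas the passage from the subspace structure to the coset description of $S$ is then immediate.
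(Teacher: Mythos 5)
Your proof is correct and follows essentially the same route as the paper, which derives this proposition directly from Proposition~\ref{prop:subspace} and Corollary~\ref{cor:S}; you have simply filled in the details the paper leaves implicit (the dimension count for a non-canonical clique $V$, the $\F_q$-independence of $a,b$, and the verification that $q\ge 9$ rules out the excluded case $p=2$, $n=2$ of Corollary~\ref{cor:S}). The only cosmetic point is that $V$ should be introduced explicitly as a non-canonical maximum clique containing $0$ (which exists since $X$ is extremal), as your independence argument relies on that choice.
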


\begin{proof}
By Theorem~\ref{thm:thm1}, $X$ is of type $(\sqrt{q}+1, q)$. Let $V$ be a non-canonical clique containing $0$. By Proposition~\ref{prop:subspace}, $V$ is a $2$-dimensional subspace over $\F_{\sqrt{q}}$. Since $V$ is non-canonical, we can find $a, b \in \F_{q^2}$ are linearly independent over $\F_q$ such that $V=a \F_{\sqrt{q}} \oplus b \F_{\sqrt{q}}$. The proposition then follows from Corollary~\ref{cor:S}.
\end{proof}

When $q$ is a cube, we prove a result of a similar flavor below.

\begin{proposition}\label{prop:3}
Let $q=r^3$, where $r \geq 3$ is a non-square. Let $X$ be an extremal Peisert-type graph defined over $\F_{q^2}$. Then the connection set $S$ of $X$ is of the form $S=(a \F_{r} \oplus b \F_{r} \oplus c \F_{r} \setminus \{0\})\F_q^*$, where $a, b,c \in \F_{q^2}$ such that $a,b$ are linearly dependent over $\F_q$, and $a,c$ are linearly independent over $\F_q$. 
\end{proposition}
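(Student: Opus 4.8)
The plan is to combine the structural results already in hand with a direction-counting argument, so that almost all the work is front-loaded into earlier lemmas and the only genuine task is to choose a good basis of the clique. First I would pin down the parameters. Writing $r=p^s$, the hypothesis that $r$ is a non-square forces $s$ to be odd, so $n=3s$ is odd; since $3\mid n$ and $n$ has no factor of $2$, its smallest prime factor is $3$ and its largest proper divisor is $k=s$, whence $\F_{p^k}=\F_r$. By Theorem~\ref{thm:thm1}, $X$ is of type $(p^{n-k}+1,q)=(r^2+1,q)$. Because $X$ is extremal it fails the strict-EKR property, so it has a non-canonical maximum clique, which after translation I may assume contains $0$; call it $V$. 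The hypothesis $r\ge 3$ supplies the side conditions needed to invoke the earlier results (in particular $p>2$ when $n=3$ is prime). Proposition~\ref{prop:subspace} then shows $V$ is an $\F_r$-subspace of $\F_{q^2}$, and since a maximum clique has size $q=r^3$, it has $\F_r$-dimension $3$. Corollary~\ref{cor:S} gives $S=(V\setminus\{0\})\F_q^*$, which already produces the shape $S=(a\F_r\oplus b\F_r\oplus c\F_r\setminus\{0\})\F_q^*$ for \emph{any} $\F_r$-basis $a,b,c$ of $V$; the real content is to choose the basis realizing the stated dependence relations.

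The key step is a counting argument over the $\F_q$-lines through the origin. Each nonzero $w\in V$ lies in a unique such line $\F_q w$, and since $V-V=V$, the directions determined by $\pi(V)$ are exactly these lines; because $S=(V\setminus\{0\})\F_q^*$ and multiplying by $\F_q^*$ does not change a direction, we get $\mathcal{D}(X)=D(\pi(V))$, so the number of $\F_q$-lines meeting $V$ nontrivially equals $|\mathcal{D}(X)|=r^2+1$. For each such line $\ell$, the intersection $V\cap\ell$ is an $\F_r$-subspace of dimension $d_\ell\in\{1,2,3\}$; the value $d_\ell=3$ would force $V=\ell$, making $V$ canonical by Lemma~\ref{lem:correspondence}, so $d_\ell\le 2$. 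Counting nonzero elements line by line gives $\sum_{\ell}(r^{d_\ell}-1)=r^3-1$ with exactly $r^2+1$ summands. Writing $N_2$ for the number of lines with $d_\ell=2$ and $N_1=r^2+1-N_2$ for those with $d_\ell=1$, the two equations $N_1+N_2=r^2+1$ and $N_1 r+N_2 r^2=r^3+r^2$ solve to $N_2=1$. Thus there is a \emph{unique} $\F_q$-line $\ell$ with $\dim_{\F_r}(V\cap\ell)=2$, and $P:=V\cap\ell$ is a $2$-dimensional $\F_r$-plane lying inside a single $\F_q$-line.

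Finally I would build the basis. Choose $a,b$ to be an $\F_r$-basis of $P$ and $c\in V\setminus P$, so that $V=a\F_r\oplus b\F_r\oplus c\F_r$. Since $a,b\in\ell=\F_q a$, we have $b\in\F_q a$, i.e.\ $a,b$ are $\F_q$-linearly dependent. If $a,c$ were $\F_q$-linearly dependent then $c\in\F_q a=\ell$, whence $c\in V\cap\ell=P$, contradicting $c\notin P$; hence $a,c$ are $\F_q$-linearly independent. This yields the asserted form of $S$.

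I expect the main obstacle to be the counting step: one must correctly identify the number of relevant $\F_q$-lines with the type $r^2+1$ through the equality $\mathcal{D}(X)=D(\pi(V))$, rule out $d_\ell=3$ via non-canonicity, and carry out the linear system to force $N_2=1$. Once the unique plane $P$ is isolated, the dependence and independence of the chosen basis vectors are immediate from $P\subseteq\ell$ and the uniqueness of $\ell$, so the rest of the argument is routine.
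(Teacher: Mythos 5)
Your proposal is correct and follows essentially the same route as the paper: invoke Proposition~\ref{prop:subspace} and Corollary~\ref{cor:S} to get $S=(V\setminus\{0\})\F_q^*$ for a $3$-dimensional $\F_r$-subspace $V$, then count nonzero elements of $V$ over the $r^2+1$ lines $c_i\F_q$ to show exactly one line meets $V$ in an $\F_r$-plane, from which the basis $a,b,c$ with the stated dependence relations is read off. The only cosmetic differences are that you make explicit the exclusion of a $3$-dimensional intersection and the verification of the type $(r^2+1,q)$, both of which the paper leaves implicit.
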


\begin{proof}
Let $q=p^n$. Let $k$ be the largest proper divisor of $n$. Since $q$ is not a square and $q$ is a cube, it follows that $k=n/3$ and $r=p^k$. By Theorem~\ref{thm:thm1}, $X$ is of type $(r^2+1, q)$ since $p^{n-k}+1=r^2+1$. Let $V$ be a non-canonical clique containing $0$. By Proposition~\ref{prop:subspace}, $V$ is a $3$-dimensional subspace over $\F_{p^k}=\F_r$. By Corollary~\ref{cor:S}, we have $S=(V\setminus \{0\})\F_q^*$. Thus, it suffices to analyze the structure of $V$.

We can write $S=\sqcup_{i=1}^{r^2+1} c_i \F_q^*$. For each $1 \leq i \leq r^2+1$, it is clear that $|c_i \F_q \cap V|\in \{r,r^2\}$ since $c_i \F_q \cap V$ is a nonempty vector space over $\F_{r}$ and $V$ is a non-canonical clique. Let $A$ be the number of $i$ such that $|c_i \F_q \cap V|=r$. Then we have
$$
r^3-1=|V|-1=|S \cap V|=\sum_{i=1}^{r^2+1} |c_i \F_q^* \cap V|=\sum_{i=1}^{r^2+1} (|c_i \F_q \cap V|-1)=A(r-1)+(r^2+1-A)(r^2-1),
$$
and thus $A=r^2$. Therefore, there is exactly one $i_0$ such that $|c_{i_0}\F_q \cap V|=r^2$. Let $U=c_{i_0} \F_q \cap V$. Then we can write $U=a\F_r \oplus b\F_r$, where $a/b \in \F_q$ and $a/b \notin \F_r$. Pick $c \in V \setminus U$; then we have $V=a\F_r \oplus b\F_r \oplus c\F_r$, and $c/a \notin \F_q$. Since $S=(V\setminus \{0\})\F_q^*$, the result follows. 
\end{proof}

\section{Classification of non-canonical cliques in extremal graphs}\label{sec:count}

\subsection{$q$ is a square} In this section, we classify maximum cliques in an extremal graph defined over $\F_{q^2}$, where $q$ is a square. 

\begin{proposition}\label{prop:NonCanonicalCliques}
Let $q$ be a square such that $q \geq 9$. Let $X$ be an extremal Peisert-type graph defined over $\F_{q^2}$. Then $X$ has exactly $\sqrt{q}+1$ non-canonical cliques containing $0$.  
\end{proposition}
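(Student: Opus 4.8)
The plan is to pass to the projective space $\PG(3,\sqrt q)$ and to recognize the canonical cliques and the non-canonical cliques as the two opposite reguli of a hyperbolic quadric, so that the count reduces to the classical enumeration of common transversals of three pairwise skew lines. Write $r=\sqrt q$, so that $\F_{q^2}=\F_{r^4}$ is a $4$-dimensional vector space over $\F_r$, and identify the points of $\PG(3,r)$ with the $1$-dimensional $\F_r$-subspaces of $\F_{r^4}$. By Proposition~\ref{prop:2} the connection set is $S=(W\setminus\{0\})\F_q^*$ with $W=a\F_r\oplus b\F_r$, where $a,b$ are linearly independent over $\F_q$; thus $\{a,b\}$ is an $\F_q$-basis of $\F_{r^4}$. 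Since $q$ is a square, the exponent $n$ is even, and $q\ge 9$ rules out $n=2$ in characteristic $2$, so the hypotheses of Proposition~\ref{prop:subspace} hold and every non-canonical clique containing $0$ is a $2$-dimensional $\F_r$-subspace, hence projectivizes to a line of $\PG(3,r)$.

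First I would exhibit two partitions of the projectivization $\bar S:=S/\F_r^*$, a set of $(r+1)^2$ points. On the one hand, since $S$ is the disjoint union of the $r+1$ cosets $c_i\F_q^*$ and distinct $\F_q^*$-cosets give canonical cliques meeting only in $0$, the lines $L_i:=c_i\F_q/\F_r^*$ are $r+1$ pairwise skew lines, each with $r+1$ points, partitioning $\bar S$. On the other hand, I would check that the $\F_q^*$-scalings $cW$ have setwise stabilizer exactly $\F_r^*$ in $\F_q^*$ and pairwise meet only in $0$, so that the lines $M_j:=d_jW/\F_r^*$ (for coset representatives $d_j$ of $\F_r^*$ in $\F_q^*$) form a second partition of $\bar S$ into $r+1$ pairwise skew lines. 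Each $cW$ is a non-canonical clique: multiplication by $c\in\F_q^*$ is an automorphism of $X$ fixing $0$, and $W$ is not an $\F_q$-line since $a,b$ are $\F_q$-independent. This already produces $r+1$ non-canonical cliques containing $0$, giving the lower bound.

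For the upper bound I would argue as follows. The canonical cliques containing $0$ are exactly the $c_i\F_q$, so a non-canonical clique $V$ containing $0$ projectivizes to a line $\bar V\subseteq\bar S$ distinct from every $L_i$. As $\bar S=\bigsqcup_i L_i$ and $\bar V$ meets each $L_i$ in at most one point, its $r+1$ points are distributed one per line; in particular $\bar V$ is a common transversal of the three pairwise skew lines $L_1,L_2,L_3$. By the classical regulus theorem, three pairwise skew lines in $\PG(3,r)$ lie on a unique hyperbolic quadric and admit exactly $r+1$ common transversals (the opposite regulus). Since $V\mapsto\bar V$ is injective, there are at most $r+1$ non-canonical cliques containing $0$, and combining with the $r+1$ examples $d_jW$ yields exactly $r+1=\sqrt q+1$.

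The main obstacle, and the conceptual heart of the argument, is setting up the correct projective picture: recognizing that the canonical cliques and the scalings of $W$ sweep out $\bar S$ as two families of $r+1$ pairwise skew lines, so that the regulus theorem becomes applicable. Once this is in place the count is immediate; moreover the same picture shows that $\{L_i\}$ and $\{M_j\}$ are precisely the two reguli of a hyperbolic quadric, foreshadowing the isomorphism with $VO^+(4,\sqrt q)$ in Theorem~\ref{thm:thm2}. A minor point requiring care is verifying the hypotheses of Proposition~\ref{prop:subspace} in characteristic $2$, which is exactly where the restriction $q\ge 9$ (excluding $q=4$) enters.
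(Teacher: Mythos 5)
Your proof is correct, and while the lower-bound half coincides with the paper's (the scalings $\varepsilon^i V$ of one non-canonical subspace, shown to be pairwise intersecting only in $0$ because two $\F_q$-dependent but $\F_{\sqrt q}$-independent vectors would force $V$ into an $\F_q$-line), your upper bound is genuinely different. The paper argues algebraically: given another non-canonical clique $V'$, it locates $\mu a,\nu b\in V'$, writes $\mu a+\nu b=x(a+\lambda b)$ using the coset decomposition of $S$, and compares coefficients to conclude $\nu/\mu\in\F_{\sqrt q}$, hence $V'=\mu V$. You instead projectivize to $\PG(3,\sqrt q)$, observe that $\bar S$ is partitioned by the $r+1$ pairwise skew lines coming from the canonical cliques, note that any non-canonical clique projectivizes to a line inside $\bar S$ that must therefore be a common transversal of three of these skew lines, and invoke the regulus theorem to cap the count at $r+1$. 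Both routes are sound (your verification of the hypotheses of Proposition~\ref{prop:subspace} in characteristic $2$, and the exact-transversality count $\sum_i|\bar V\cap L_i|=r+1$, are the points that needed care and you handle them). What each buys: the paper's computation is elementary and self-contained, and its style carries over to the cube case (Proposition~\ref{prop:NonCanonicalCliquesCube}), where no comparably clean projective picture exists; your argument imports a classical geometric fact but in exchange makes the two reguli of a hyperbolic quadric visible already at this stage, effectively anticipating the isomorphism with $VO^+(4,\sqrt q)$ that the paper only establishes later in Section~\ref{sec:polar} via explicit quadratic forms, and making Lemma~\ref{lem:maximalclique} and Corollary~\ref{cor:noHM} nearly immediate.
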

\begin{proof}
Let $S$ be the connection set of $X$.
By Proposition~\ref{prop:2}, we can find $a, b \in \F_{q^2}$ that are linearly independent over $\F_q$, such that the connection $S$ can be written as:
\begin{equation}\label{decompsss}
S=(V \setminus \{0\}) \F_q^*=b \F_q^* \cup \bigcup_{\lambda \in \F_{\sqrt{q}}} (a+\lambda b)\F_q^*,    
\end{equation}
where $V=a\F_{\sqrt{q}} \oplus b\F_{\sqrt{q}}$. Note that $V$ is a non-canonical clique in the graph $X$. 

Let $\varepsilon$ be a primitive element of $\mathbb{F}_q$; then $\varepsilon^{\sqrt{q}+1}$ is a primitive element of the subfield $\mathbb{F}_{\sqrt{q}}$. From the definition of $S$, it is clear that $\varepsilon^{i} V$ is a non-canonical clique in the graph $X$, for each $i \in \{0,1,\ldots,\sqrt{q}\}$. We claim that for each $0 \leq i,j \leq \sqrt{q}$ such that $i \neq j$, we have $\varepsilon^{i} V \cap \varepsilon^{j} V=\{0\}$. Suppose otherwise, then there exist $x,y \in V \setminus \{0\}$ such that $\varepsilon^i x=\varepsilon^jy$, that is, $\varepsilon^{i-j} x=y$. It follows that $x,y$ are linearly dependent over $\F_{q}$ and $x,y$ are linearly independent over $\F_{\sqrt{q}}$. Thus, $\{x,y\}$ forms a basis of $V$ over $\F_{\sqrt{q}}$, which implies that $V$ is a canonical clique, a contradiction.  In summary, 
we have shown that $$V \setminus \{0\}, \varepsilon V \setminus \{0\}, \ldots, \varepsilon^{\sqrt{q}} V \setminus \{0\}$$ forms a partition of the connection set $S$, and $V, \varepsilon V, \ldots \varepsilon^{\sqrt{q}} V$ are $\sqrt{q}+1$ non-canonical cliques in $X$ containing 0. Next, we show there is no other non-canonical clique in $X$.

Suppose that $V'$ is a non-canonical clique in $X$ containing 0. From Proposition~\ref{prop:subspace} and Proposition~\ref{prop:2}, $V'$ is a $2$-dimensional subspace over $\F_{\sqrt{q}}$, and $S=(V' \setminus \{0\}) \F_q^*$. Since $a \F_q^*, b \F_q^* \subset S$, there exist $\mu, \nu \in \F_q^*$ such that $\mu a, \nu b \in V'$. Recall that $a/b \notin \F_q$, thus $V'=\mu a\F_{\sqrt{q}} \oplus \nu b\F_{\sqrt{q}}$. Since $(\mu a+\nu b)\F_q^* \subset S$, in view of equation~\eqref{eq:decompSS}, we have $\mu a+\nu b =x(a+\lambda b)$ for some $\lambda \in \F_{\sqrt{q}}^*$ and $x \in \F_q^*$. It follows that $(\mu-x)a=(\lambda x -\nu)b$. Note that both $\mu-x$ and $\lambda x-\nu$ are in $\F_q$, while $a/b \notin \F_q$. Thus, we have $\mu=x$ and $\lambda x=\nu$. It follows that $\nu/\mu=\lambda \in \F_{\sqrt{q}}$ and thus $V'=\mu V$. Recall that $\varepsilon$ is a primitive root of $\F_q$, $\mu \in \F_q$, and $\varepsilon^{\sqrt{q}+1}V=V$. Thus, there is $0 \leq i \leq \sqrt{q}$ such that $V'=\varepsilon^i V$. This completes the proof of the theorem. 
\end{proof}


Next, we deduce two interesting corollaries.

\begin{corollary}\label{cor:numberofgraphs}
Let $q$ be a square such that $q \geq 9$. Then the number of extremal Peisert-type graphs defined over $\F_{q^2}$ is given by $(q+1)\sqrt{q}$.
\end{corollary}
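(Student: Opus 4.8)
The plan is to prove this by a double-counting argument relating extremal graphs to their non-canonical cliques through the vector space $\F_{q^2}$ viewed over $\F_{\sqrt{q}}$. First I would record the two directions of correspondence that have already been established. By Proposition~\ref{prop:2}, every non-canonical clique containing $0$ in any extremal graph is a $2$-dimensional $\F_{\sqrt{q}}$-subspace $V$ of $\F_{q^2}$ that fails to be $\F_q$-linear, and by Corollary~\ref{cor:S} such a clique $V$ determines a unique extremal graph, namely the one with connection set $S=(V\setminus\{0\})\F_q^*$. Conversely, any $2$-dimensional $\F_{\sqrt{q}}$-subspace $V$ that is not $\F_q$-linear yields, via $S=(V\setminus\{0\})\F_q^*$, a Peisert-type graph of type $(\sqrt{q}+1,q)$ in which $V$ is a non-canonical clique; by Theorem~\ref{thm:thm1} this graph is extremal. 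Thus the assignment $V\mapsto X$ is a well-defined surjection from the set of non-canonical cliques containing $0$ onto the set of extremal graphs, and by Proposition~\ref{prop:NonCanonicalCliques} every fiber has size exactly $\sqrt{q}+1$. Consequently the number of extremal graphs equals $N/(\sqrt{q}+1)$, where $N$ is the total number of non-canonical cliques containing $0$.

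The main remaining step is to compute $N$. Here I would regard $\F_{q^2}$ as a $4$-dimensional vector space over $\F_{\sqrt{q}}$, so that $N$ is the number of $2$-dimensional $\F_{\sqrt{q}}$-subspaces that are \emph{not} $\F_q$-linear. The total number of $2$-dimensional $\F_{\sqrt{q}}$-subspaces is the Gaussian coefficient $\qbin{4}{2}{\sqrt{q}}=(q+1)(q+\sqrt{q}+1)$. The subspaces that must be excluded are exactly the canonical cliques, that is, the $\F_q$-lines $c\F_q$ with $c\in\F_{q^2}^*$. A short observation identifies these precisely: a $2$-dimensional $\F_{\sqrt{q}}$-subspace has $q$ elements, so if it happens to be an $\F_q$-subspace it is forced to be $1$-dimensional over $\F_q$, and conversely every such $\F_q$-line is a $2$-dimensional $\F_{\sqrt{q}}$-subspace. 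Hence the excluded set consists of the $(q^2-1)/(q-1)=q+1$ one-dimensional $\F_q$-subspaces of $\F_{q^2}$, giving $N=(q+1)(q+\sqrt{q}+1)-(q+1)=(q+1)(q+\sqrt{q})=(q+1)\sqrt{q}(\sqrt{q}+1)$.

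Dividing by the fiber size then yields the answer: the number of extremal graphs is $N/(\sqrt{q}+1)=(q+1)\sqrt{q}$. The only point requiring care—and what I would treat as the crux—is making the $(\sqrt{q}+1)$-to-one correspondence airtight, since overcounting could occur if distinct non-canonical cliques collapsed to the same graph in an uncontrolled way. This is handled precisely by combining Corollary~\ref{cor:S} (each non-canonical clique recovers its graph, so the map to graphs is well defined and the fiber over a graph $X$ is exactly its set of non-canonical cliques) with Proposition~\ref{prop:NonCanonicalCliques} (every such fiber has the same size $\sqrt{q}+1$); together with Theorem~\ref{thm:thm1} guaranteeing that each non-canonical $\F_{\sqrt{q}}$-subspace produces a genuinely extremal graph of type $(\sqrt{q}+1,q)$ rather than one of larger type, this closes the argument.
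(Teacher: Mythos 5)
Your proposal is correct and follows essentially the same route as the paper: both count the non-canonical cliques through $0$ across all extremal graphs as $2$-dimensional $\F_{\sqrt{q}}$-subspaces of $\F_{q^2}$ that are not $\F_q$-lines (a Grassmannian difference equal to $\sqrt{q}(\sqrt{q}+1)(q+1)$), then divide by the fiber size $\sqrt{q}+1$ supplied by Proposition~\ref{prop:NonCanonicalCliques}, with Corollary~\ref{cor:S} guaranteeing that each clique determines its graph. Your explicit verification of surjectivity (that every such subspace actually occurs as a non-canonical clique of some extremal graph) is a point the paper leaves implicit, but otherwise the arguments coincide.
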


\begin{proof}
We count the total number of non-canonical cliques containing 0 among all extremal graphs defined over $\F_{q^2}$. According to Proposition~\ref{prop:3}, each such clique is a $2$-dimensional subspace over $\F_{\sqrt{q}}$ which is not a $1$-dimensional subspace over $\F_q$. Thus, the total number of such non-canonical cliques is equal to the difference of the following two Gaussian coefficients:
\begin{align*}
 \qbin{4}{2}{\sqrt{q}}- \qbin{2}{1}{q}
&= \frac{(q^2-1)(q^2-\sqrt{q})}{(q-1)(q-\sqrt{q})}-\frac{q^2-1}{q-1}\\
&=(q+1)(q+\sqrt{q}+1)-(q+1)=\sqrt{q}(\sqrt{q}+1)(q+1).
\end{align*}

Note that Proposition~\ref{prop:3} also states that a non-canonical clique in an extremal graph uniquely determines the connection set of the graph, and thus determines the graph. By Proposition~\ref{prop:NonCanonicalCliquesCube}, each extremal graph has $\sqrt{q}+1$ non-canonical cliques. Thus, the corollary follows immediately.
\end{proof}

\begin{corollary}
\label{cor:3induce}
   Let $q$ be a square such that $q \geq 9$. Then for any given Peisert-type graph $Z$ of type $(3,q)$, there exists a unique extremal Peisert-type graph defined over $\F_{q^2}$ that contains $Z$ as a subgraph. 
\end{corollary}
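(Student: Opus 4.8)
The plan is to translate the statement into a purely combinatorial incidence problem in $\PG(1,q)$ and then settle it by a double count. First I would record the dictionary between Peisert-type connection sets and subsets of $\PG(1,q)$: since a connection set that is a union of $\F_q^*$-cosets is recorded faithfully by its set of directions, a Peisert-type graph $Z$ of type $(3,q)$ corresponds bijectively to a $3$-subset $\mathcal{D}(Z) \subset \PG(1,q)$, while an extremal graph $X$ corresponds, via Proposition~\ref{prop:2}, to the direction set $\mathcal{D}(X)$ of the $\F_{\sqrt{q}}$-linear set $V = a\F_{\sqrt{q}} \oplus b\F_{\sqrt{q}}$, which is a $(\sqrt{q}+1)$-subset of $\PG(1,q)$ (a Baer subline). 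Because $Z$ is a subgraph of $X$ if and only if $S_Z \subseteq S_X$ if and only if $\mathcal{D}(Z) \subseteq \mathcal{D}(X)$, the corollary is equivalent to the assertion that every $3$-subset of $\PG(1,q)$ is contained in exactly one extremal direction set.

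Next I would show that the number $t$ of extremal direction sets containing a fixed $3$-subset does not depend on the chosen $3$-subset. The group $\PGL(2,q)$ acts on $\PG(1,q)$; it is $3$-transitive (as already used in Corollary~\ref{cor:3iso}), hence transitive on $3$-subsets. The key observation is that this action also permutes the extremal direction sets: identifying $\PGL(2,q)$ with the action of $\GL(\F_{q^2}/\F_q)$ on the $\F_q^*$-cosets, an $\F_q$-linear automorphism $g$ of $\F_{q^2}$ sends $S_X = (V \setminus \{0\})\F_q^*$ to $(g(V) \setminus \{0\})\F_q^*$; since $\F_{\sqrt{q}} \subseteq \F_q$, the map $g$ is automatically $\F_{\sqrt{q}}$-linear, so $g(V)$ is again a $2$-dimensional $\F_{\sqrt{q}}$-subspace with an $\F_q$-independent basis, i.e. $g(X)$ is again extremal. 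Therefore $t$ is constant over all $3$-subsets.

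Finally I would double count the incidences between $3$-subsets and extremal direction sets under containment. By Corollary~\ref{cor:numberofgraphs} there are $N = (q+1)\sqrt{q}$ extremal graphs, and since the direction set determines the connection set and hence the graph, there are exactly $N$ extremal direction sets, each containing $\binom{\sqrt{q}+1}{3}$ three-subsets. Counting incidences two ways gives $t\binom{q+1}{3} = (q+1)\sqrt{q}\binom{\sqrt{q}+1}{3}$, and the routine simplification (using $(\sqrt{q}+1)(\sqrt{q}-1) = q-1$) yields $t = 1$. Thus every $3$-subset lies in a unique extremal direction set, which is exactly the claim.

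The main obstacle is the constancy step: one must verify that $\PGL(2,q)$ genuinely permutes the extremal direction sets, which rests on recognizing these sets as the Baer sublines of $\PG(1,q)$ and on the observation that $\F_q$-linear maps preserve $\F_{\sqrt{q}}$-linearity. Once constancy is in hand the double count is immediate. Throughout I use $q \geq 9$ so that Proposition~\ref{prop:2} and Corollary~\ref{cor:numberofgraphs} apply.
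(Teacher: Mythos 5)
Your proof is correct and is essentially the paper's own argument: both establish that the number of extremal graphs containing a fixed type-$(3,q)$ subgraph is constant via the $3$-transitivity of $\PGL(2,q)$ (as in Corollary~\ref{cor:3iso}), and then conclude $t=1$ by double counting against the count $(q+1)\sqrt{q}$ from Corollary~\ref{cor:numberofgraphs} together with the identity $\binom{q+1}{3}/\binom{\sqrt{q}+1}{3}=(q+1)\sqrt{q}$. Your explicit verification that the $\PGL(2,q)$-action permutes the extremal direction sets (because $\F_q$-linear maps are $\F_{\sqrt{q}}$-linear) is a detail the paper leaves implicit, so your write-up is, if anything, slightly more complete.
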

\begin{proof}
Let $X$ be an extremal graph defined over $\F_{q^2}$. Then by Theorem~\ref{thm:thm1}, $X$ is of type $(\sqrt{q}+1,q)$ and thus it contains $\binom{\sqrt{q}+1}{3}$ many Peisert-type graphs of type $(3,q)$ as subgraphs. On the other hand, if $Z,Z'$ are two Peisert-type graphs of type $(3,q)$, then the number of extremal graphs containing $Z$ as a subgraph and the number of extremal graphs containing $Z'$ as a subgraph are equal in view of the proof of Corollary~\ref{cor:3iso}. 

Note that
$$
\frac{\binom{q+1}{3}}{\binom{\sqrt{q}+1}{3}}=\frac{(q+1)q(q-1)}{(\sqrt{q}+1)\sqrt{q}(\sqrt{q}-1)}=(q+1)\sqrt{q}.
$$    
Thus, the corollary follows from Corollary~\ref{cor:numberofgraphs}.
\end{proof}

\subsection{$q$ is a cube} In this section, we classify maximum cliques in an extremal graph defined over $\F_{q^2}$, where $q$ is a cube. 

\begin{proposition}\label{prop:NonCanonicalCliquesCube}
Let $q=r^3$, where $r \geq 3$ is a non-square. Let $X$ be an extremal Peisert-type graph defined over $\F_{q^2}$. Then $X$ has exactly $r^2+r+1$ non-canonical cliques containing $0$.  
\end{proposition}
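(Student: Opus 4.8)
The plan is to imitate the two-part structure of the proof of Proposition~\ref{prop:NonCanonicalCliques} (the square case): first exhibit $r^2+r+1$ non-canonical cliques as scalar multiples of a fixed one, then show there are no others. By Theorem~\ref{thm:thm1}, $X$ is of type $(r^2+1,q)$; by Corollary~\ref{cor:S} and Proposition~\ref{prop:3}, fix a non-canonical clique $V\ni 0$, so $V$ is a $3$-dimensional $\F_r$-subspace with $S=(V\setminus\{0\})\F_q^*$ and $V=U\oplus c\F_r$, where $U=V\cap\ell_0$ is a $2$-dimensional $\F_r$-subspace lying in a single $\F_q$-line $\ell_0$ and $c\notin\ell_0$. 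Since multiplication by a nonzero scalar of $\F_{q^2}$ is a graph isomorphism onto another Peisert-type graph that fixes $0$ and permutes $\F_q$-lines (hence preserves the canonical/non-canonical dichotomy), the count of non-canonical cliques is unchanged under such a map, so I may normalize $\ell_0=\F_q$. Choosing an $\F_q$-basis $\{1,x\}$ of $\F_{q^2}$ with $x\notin\F_q$ and writing $z=\sigma+\tau x$ as $(\sigma,\tau)$, one gets $U\subset\F_q$, $V=U\times\F_r$, and a direct computation gives the clean description
\[
S\cup\{0\}=\{(\sigma,\tau):\ \tau=0 \ \text{ or }\ \sigma\tau^{-1}\in U\}.
\]

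First I would produce the cliques. For $\lambda\in\F_q^*$ the set $\lambda V$ is a clique (as $\lambda(V\setminus\{0\})\subset(V\setminus\{0\})\F_q^*=S$) and is non-canonical (scaling permutes $\F_q$-lines). The stabilizer $\{\lambda\in\F_q^*:\lambda V=V\}$ equals $\F_r^*$: the ring $\{\lambda\in\F_{q^2}:\lambda V\subseteq V\}$ is a finite integral domain, hence a subfield $\F_{r^t}$ with $t\mid 3$, and $t=3$ would force $V$ to be an $\F_q$-line, i.e.\ canonical. Thus $\{\lambda V:\lambda\in\F_q^*\}$ gives exactly $(q-1)/(r-1)=r^2+r+1$ distinct non-canonical cliques, all with $\F_q$-line $\ell_0$ as their $2$-dimensional part. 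The same field argument (now $t\mid 2$, and $t=2$ is excluded precisely because $\F_{r^2}\not\subset\F_{r^3}$) shows that $\{\lambda U:\lambda\in\F_q^*\}$ are $r^2+r+1$ distinct $2$-dimensional $\F_r$-subspaces of $\ell_0$, hence \emph{all} of them.

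Next I would show every non-canonical clique $W\ni 0$ is some $\lambda V$. Since $S=(W\setminus\{0\})\F_q^*$ by Corollary~\ref{cor:S}, the line $\ell_0$ meets $W$, so $W_0:=W\cap\ell_0$ has $\F_r$-dimension $1$ or $2$. If $\dim W_0=2$, then $W_0=\lambda U$ by the previous paragraph; after replacing $W$ by $\lambda^{-1}W$ we may assume $W\cap\ell_0=U$, and then analyzing the fibers of the projection $p\colon(\sigma,\tau)\mapsto\tau$ against the description of $S$ forces the $\tau$-coordinates of $W$ to equal $\F_r$ and $W=U\times\F_r=V$; this part is routine and parallels Proposition~\ref{prop:NonCanonicalCliques}. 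The crux, and the \textbf{main obstacle}, is the case $\dim W_0=1$, which I expect cannot occur. Writing $W\cap\ell_0=\F_r\,(g_0,0)$ and $T:=p(W)$, the kernel of $p|_W$ is $W_0$, so $T$ is a $2$-dimensional $\F_r$-subspace of $\F_q$; requiring each fiber over $\tau\in T\setminus\{0\}$ to lie in $S$ forces $g_0\tau^{-1}\in U$ for all such $\tau$, and comparing cardinalities upgrades this to $g_0^{-1}(T\setminus\{0\})=\{u^{-1}:u\in U\setminus\{0\}\}$. Hence the inverse-set $U^{-1}:=\{u^{-1}:u\in U\setminus\{0\}\}\cup\{0\}=g_0^{-1}T$ is itself a $2$-dimensional $\F_r$-subspace. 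So it suffices to prove the following key lemma: \emph{no $2$-dimensional $\F_r$-subspace of $\F_{r^3}$ has its inverse-set a subspace.}

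For the key lemma I would argue with linearized polynomials. Write such a subspace as a trace hyperplane $T=\{t:\Tr(at)=0\}$ and suppose its inverse-set is $T'=\{t:\Tr(bt)=0\}$ for some $a,b\in\F_{r^3}^*$. Since inversion is an involution, the defining condition says the separable linearized polynomial $g(t)=\Tr(at)=at+a^rt^r+a^{r^2}t^{r^2}$, whose root set is exactly $T$, has the same nonzero roots as $N(t)\,\Tr(bt^{-1})=b\,t^{r+r^2}+b^r t^{1+r^2}+b^{r^2}t^{1+r}=t^{1+r}P(t)$, where $P(t)=b\,t^{r^2-1}+b^r t^{r^2-r}+b^{r^2}$. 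As $P$ has degree $r^2-1$, vanishes on the $r^2-1$ points of $T\setminus\{0\}$, and satisfies $P(0)\neq 0$, the polynomials $tP(t)$ and $g(t)$ have the same $r^2$ simple roots, so $tP(t)=\lambda g(t)$ for a constant $\lambda$. But $tP(t)$ contains the monomial $b^r t^{\,r^2-r+1}$ whose exponent is none of $1,r,r^2$ for $r\ge 2$, while $\lambda g$ is supported on exponents $1,r,r^2$; hence $b^r=0$, a contradiction. Applying the lemma to $U$ rules out the case $\dim W_0=1$, so every non-canonical clique is one of the $r^2+r+1$ cliques $\lambda V$, completing the count.
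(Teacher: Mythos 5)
Your proposal is correct, and its overall architecture matches the paper's: both exhibit the $\F_q^*$-orbit of $V$ as $r^2+r+1$ non-canonical cliques (the paper via the powers $\varepsilon^i V$ and the explicit claim that $Ux=U$ forces $x\in\F_r^*$, you via the cleaner observation that the stabilizer ring is a subfield $\F_{r^t}$ with $t\mid 3$), and both then split an arbitrary non-canonical clique according to whether it meets the distinguished line $\ell_0$ in dimension $2$ (the paper's Case 1, which you correctly flag as routine --- your fiber analysis does close it) or dimension $1$ (the paper's Case 2). Where you genuinely diverge is in killing the dimension-one case. The paper runs a long explicit elimination with a bijection $u\mapsto v_u$ that eventually arrives at $U''\setminus\{0\}=1/(U\setminus\{y_1\gamma-1\})$ --- i.e.\ exactly your obstruction that the inverse-set of a $2$-dimensional $\F_r$-subspace of $\F_{r^3}$ would be a subspace --- and then refutes it by a coordinate computation tied to the specific basis $\{1,b\}$ and the independence of $1,b,b^2$, with some subcases left as ``similar.'' You instead reach the inverse-set condition directly from the fibration $p$ and dispose of it once and for all with a clean, general lemma proved by comparing the linearized polynomial $\Tr(at)$ with $N(t)\Tr(bt^{-1})=t^{1+r}P(t)$ and noting the stray monomial $t^{r^2-r+1}$; I checked this argument and it is sound for all $r\ge 2$. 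Your route buys a reusable, basis-free statement and avoids the paper's case-splitting at the end, at the cost of invoking linearized-polynomial machinery; the paper's computation is more elementary but more fragile. Two small presentational points: for your displayed description of $S\cup\{0\}$ you need to take the basis element $x$ to be $c$ itself (or an $\F_r^*$-multiple), not an arbitrary element of $\F_{q^2}\setminus\F_q$; and you should state that $\Tr$ in the key lemma denotes the relative trace $\F_{r^3}\to\F_r$, which differs from the absolute trace the paper defines in Section 6.
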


\begin{proof}
By Proposition~\ref{prop:3}, the connection set $S$ of $X$ is of the form $S=(a \F_{r} \oplus b \F_{r} \oplus c \F_{r} \setminus \{0\})\F_q^*$, where and $a, b,c \in \F_{q^2}$ such that $a,b$ are linearly dependent over $\F_q$, and $a,c$ are linearly independent over $\F_q$. Without loss of generality, we may assume that $a=1$. Then we have $b \in \F_{r^3} \setminus \F_r$ and $c \in \F_{q^2} \setminus \F_q$. In particular, since $\F_{r^2} \cap \F_{r^3}=\F_r$, it follows that $1,b,b^2$ are linearly independent over $\F_r$. 

Let $U=\F_r \oplus b\F_{r}$. We claim that if $x \in \F_q$, then $Ux=U$ holds if and only if $x \in \F_r^*$. Indeed, if $Ux=U$, then $x \in U$. If $x \notin \F_r$, then we can write $x=d+eb$ for some $d, e \in \F_r$ with $e \neq 0$. However, note that $db+eb^2=bx \in Ux$ implies that $1,b,b^2$ are linearly dependent over $\F_r$, a contradiction. 

Obviously, $V=\F_{r} \oplus b \F_{r} \oplus c \F_{r}$ is a non-canonical  clique in $X$. Note that we can also write $V$ as the disjoint union
\begin{equation}\label{eq:decomp}
V=U \sqcup \bigsqcup_{u \in U} (u+c) \F_r^*.   
\end{equation}
In particular, since $c \notin \F_q$, we have $U=V \cap \F_q$. Also note that equation~\eqref{eq:decomp} allows us to write
\begin{equation}\label{eq:decompS}
S=(V \setminus \{0\})\F_q^*=\F_q^* \sqcup \bigsqcup_{u \in U} (u+c) \F_q^*
\end{equation}
in view of Corollary~\ref{cor:S}.

Let $\varepsilon$ be a primitive element of $\mathbb{F}_q$; then $\varepsilon^{r^2+r+1}$ is a primitive element of the subfield $\mathbb{F}_{r}$. From the definition of $S$, it is clear that $\varepsilon^{i} V$ is a non-canonical  clique in the graph $X$, for each $i \in \{0,1,\ldots,r^2+r\}$. We claim that for each $0 \leq i,j \leq r^2+r$ such that $i \neq j$, we have $\varepsilon^{i} V \neq  \varepsilon^{j} V$. Suppose otherwise, then there is $1 \leq i \leq r^2+r$, such that $\varepsilon^i V=V$. We have a decomposition for $\varepsilon^i V$ which is similar to equation~\eqref{eq:decomp}:
$$
\varepsilon^i V=\varepsilon^i U \sqcup \bigsqcup_{u \in U} \varepsilon^i (u+c) \F_q^*   
$$
so that we can read that $\varepsilon^i V \cap \F_q=\varepsilon^i U$. Thus, we must have $$U=V 
\cap \F_q=\varepsilon^i V \cap \F_q=\varepsilon^i U,$$ which implies that $\varepsilon^i \in \F_r$ by the above claim, a contradiction.

We have shown that there are at least $(r^2+r+1)$ non-canonical cliques in $X$. It suffices to show that there are at most $(r^2+r+1)$ such cliques. Let $V'$ be a non-canonical clique in $X$. We can write $V'=\alpha\F_r \oplus \beta \F_r \oplus \gamma \F_r$, where $\alpha, \beta$ are linearly dependent over $\F_q$, and $\alpha, \gamma$ are linearly independent over $\F_q$. It follows from Corollary~\ref{cor:S} that 
$S=(\alpha\F_r \oplus \beta \F_r \oplus \gamma \F_r\setminus \{0\})\F_q^*$. Let $U'=\alpha\F_r \oplus \beta \F_r$. We will complete the proof by dividing our discussion according to the following two cases, namely, $\alpha \in \F_q$ or $\alpha \notin \F_q$. In Case 1, we will show the number of choices of $V$ is at most $r^2+r+1$, and we will show Case 2 is impossible.

\textbf{Case 1}: $\alpha \in \F_q$. It follows from the assumption that $\beta \in \F_q$ and $\gamma \not\in \F_q$. Similar to equation \eqref{eq:decompS}, we have
\begin{equation}\label{eq:decompSS}
S=\F_q^* \sqcup \bigsqcup_{u \in U'} (u+\gamma) \F_q^*.
\end{equation}

Note that the number of possible choices of $U'$ is the number of $2$-dimensional $\F_r$-subspace in $\F_q=\F_{r^3}$, which is given by $\frac{r^3-1}{r-1}=r^2+r+1$. Next we show that each $U'$ corresponds to at most one $V'$, which implies the total number of $V'$ in this case is at most $(r^2+r+1)$. Suppose otherwise, than we have $W_1=U' \oplus \gamma_1 \F_r$ and $W_2=U' \oplus \gamma_2 \F_r$ being two different cliques in $X$, such that $\gamma_1, \gamma_2 \notin \F_q$. In view of equation~\eqref{eq:decompSS}, we have $(U'+\gamma_1)\F_q^*=(U'+\gamma_2)\F_q^*$. Therefore, for each $u \in U'$, there is $x_u \in \F_q^*$ and $v_u \in U'$, such that
$$
u+\gamma_1=x_u(v_u+\gamma_2).
$$
Pick any $u_1, u_2 \in U'$ such that $u_1 \neq u_2$. We have
$$
u_1-u_2=(u_1+\gamma_1)-(u_2+\gamma_1)=(x_{u_1} v_{u_1}-x_{u_2} v_{u_2})+(x_{u_1}-x_{u_2})\gamma_2.
$$
In particular, this implies that
$$
(x_{u_1}-x_{u_2})\gamma_2=u_1-u_2-(x_{u_1} v_{u_1}-x_{u_2} v_{u_2}) \in \F_q^*,
$$
which forces $x_{u_1}=x_{u_2}$ since $\gamma_2 \notin \F_q$. This implies that there is $x \in \F_q^*$ such that $x=x_u$ for all $u \in U'$. 

Let $u \in U'$. We have
$$
u=(u+\gamma_1)-\gamma_1=x(v_u+\gamma_2)-x(v_0+\gamma_2)=x(v_u-v_0) \in xU'.
$$
Thus, we must have $U'=xU'$. From the claim, $x \in \F_r^*$. This implies that $(U'+\gamma_1)\F_r^*=(U'+\gamma_2)\F_r^*$ and thus $W_1=W_2$, a contradiction.

\textbf{Case 2}: $\alpha \notin \F_q$. It follows from the assumption that $\beta \not \in \F_q$. Then we can always assume that $\gamma \in \F_q$ (otherwise, there is $x,y \in \F_r$ such that $x\alpha+y\beta+\gamma \in \F_q$ in view of the decomposition~\eqref{eq:decompS}, and we can replace $\gamma$ with $\gamma-(x\alpha+y\beta)$). We will derive a contradiction.  

Since $c\F_q^* \subset S$, in view of the decomposition~\eqref{eq:decompS}, we can write $c=(v+\gamma)x_0$ for some $v \in U'$ and $x_0 \in \F_q^*$. Rewriting equation~\eqref{eq:decompS} in the following:
$$
S=\F_q^* \sqcup \bigsqcup_{u \in U} (u+c) \F_q^*=\F_q^* \sqcup \bigsqcup_{u \in U} (u+(v+\gamma)x_0) \F_q^*=\F_q^* \sqcup \bigsqcup_{u \in U} (u/x_0+v+\gamma) \F_q^*.
$$
Note that $v \F_q^* \subset S$ and $v \notin \F_q^*$, which implies that $u/x_0+\gamma=0$ for some $u \in U$. In other words, we have $U/x_0+\gamma=U/x_0$. Therefore,
$$
S=\F_q^* \sqcup \bigsqcup_{u \in U} (u/x_0+v) \F_q^*
=\F_q^* \sqcup \bigsqcup_{u \in U} (u+vx_0) \F_q^*
$$
It follows that
$$
\F_q^* \sqcup \bigsqcup_{u \in U}(u+vx_0)\F_q^*=S=(V' \setminus \{0\})\F_q^*=v \F_q^* \sqcup \bigsqcup_{u' \in U'} (u'+\gamma)\F_q^*
$$
and thus
$$
\bigsqcup_{u \in U\setminus \{0\}}(u+vx_0)\F_q^*=\bigsqcup_{u \in U'\setminus \{0\}}(u'+\gamma)\F_q^*.
$$
This allows us to find a bijection between $U \setminus \{0\}$ and $U' \setminus \{0\}$ by $u \mapsto v_u$, namely, for each $u \in U \setminus \{0\}$, there is $v_u \in U' \setminus \{0\}$ and $y_u \in U'$, such that
\begin{equation}\label{bijection}
u+vx_0=y_u(v_u+\gamma)
\end{equation}
so that $(u+vx_0)\F_q^*=(v_u+\gamma)\F_q^*$.

Let $u_1,u_2 \in U \setminus \{0\}$. From equation~\eqref{bijection}, 
$$
u_1-u_2=(u_1+vx_0)-(u_2+vx_0)=(y_{u_1}v_{u_1}-y_{u_2}v_{u_2})+\gamma(y_{u_1}-y_{u_2}),
$$
which implies that 
$$
u_1-u_2-\gamma(y_{u_1}-y_{u_2})=y_{u_1}v_{u_1}-y_{u_2}v_{u_2}.
$$
Note that the left-hand side of the above equation is in $\F_q$, and the right-hand side is in $\alpha F_q$. Thus, both sides must be equal to $0$, and thus $y_{u_1}v_{u_1}=y_{u_2}v_{u_2}.$ Consequently, for each $u \in U \setminus \{0\}$, we have
$$
y_u v_u=y_1 v_1
$$
and
$$
u-1-\gamma(y_u-y_1)=0,
$$
which implies that
\begin{equation}\label{v_u}
v_u=\frac{y_1v_1}{y_u}=\frac{y_1v_1}{\frac{u-1}{\gamma}+y_1}=\frac{y_1v_1\gamma}{u-1+y_1\gamma}.    
\end{equation}

Recall that the map $u \mapsto v_u$ defines a bijection from $U \setminus \{0\}$ to $U' \setminus \{0\}$. Therefore, equation~\eqref{v_u} implies that
$$
U'\setminus \{0\}=\frac{y_1v_1\gamma}{(U \setminus \{0\})-1+y_1\gamma}.
$$
In particular, $(U \setminus \{0\})-1+y_1\gamma$ is closed under multiplication over $\F_r^*$ since $U'$ is a subspace over $\F_r$. It follows that $y_1\gamma-1 \in U$ since $r \geq 3$. Thus,
$$
U''\setminus \{0\}=\frac{1}{U \setminus \{y_1\gamma-1\}},
$$
where $U''=U'/(y_1v_1\gamma)$.
From this, it is easy to derive a contradiction. For simplicity, we only consider the case that $y_1\gamma-1 \not \in \{1,b, b+1\}$; the arguments for the other cases are similar. Under this setting, we have $1, 1/b, 1/(b+1) \in U''$. Note that $1, 1/b$ forms a basis of $U''$ over $\F_r$. Thus, there exist $r,s \in \F_r^*$ such that $1/(b+1)=r+s/b$, which implies that $1,b,b^2$ are linearly dependent over $\F_r$, violating the assumption at the beginning of the proof.
\end{proof}

Next, we compute the number of extremal Peisert-type graphs.

\begin{corollary}\label{cor:numberofgraphsCube}
Let $q=r^3$, where $r \geq 3$ is a non-square. Then the number of extremal Peisert-type graphs defined over $\F_{q^2}$ is given by $r(r^5+r^4+r^3+r^2+r+1)$.
\end{corollary}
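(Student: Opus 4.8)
The plan is to follow the counting strategy of Corollary~\ref{cor:numberofgraphs}: I will determine the total number $N$ of non-canonical cliques containing $0$ among all extremal graphs defined over $\F_{q^2}$, and then divide by the number of such cliques per graph. This division is legitimate because, by Corollary~\ref{cor:S}, a non-canonical clique $V$ containing $0$ determines the connection set $S=(V\setminus\{0\})\F_q^*$ and hence the extremal graph itself, so the assignment from a non-canonical clique to its graph is a well-defined surjection; by Proposition~\ref{prop:NonCanonicalCliquesCube} every extremal graph has exactly $r^2+r+1$ non-canonical cliques containing $0$, so all fibers have size $r^2+r+1$. Thus the number of extremal graphs equals $N/(r^2+r+1)$.

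First I would record the intrinsic description of these cliques. By Proposition~\ref{prop:3} a non-canonical clique is a $3$-dimensional $\F_r$-subspace $V$ of $\F_{q^2}$ that meets exactly one $\F_q$-line (one $1$-dimensional $\F_q$-subspace) $\ell$ in a $2$-dimensional $\F_r$-subspace $U=V\cap\ell$. The crucial geometric fact, which I would isolate as a lemma, is that a $3$-dimensional $\F_r$-subspace meets \emph{at most one} $\F_q$-line in a $2$-dimensional subspace: if $V$ met distinct lines $\ell_1\neq\ell_2$ in $2$-dimensional subspaces $U_1,U_2$, then $U_1,U_2\subseteq V$ would force $\dim_{\F_r}(U_1\cap U_2)\geq 2+2-3=1$, yet $U_1\cap U_2\subseteq\ell_1\cap\ell_2=\{0\}$, a contradiction. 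Using this, the characterization simplifies: a $3$-dimensional $\F_r$-subspace $V$ is a non-canonical clique of an extremal graph if and only if it meets \emph{some} $\F_q$-line in a $2$-dimensional subspace (such a line then being automatically unique, and $V$ not being an $\F_q$-line). I would verify the ``if'' direction directly: the coset count $1\cdot(r^2-1)+r^2\cdot(r-1)=r^3-1=|V|-1$ shows $X_V=\operatorname{Cay}(\F_{q^2}^+,(V\setminus\{0\})\F_q^*)$ has type $(r^2+1,q)$, the relation $V-V=V\subseteq (V\setminus\{0\})\F_q^*\cup\{0\}$ shows $V$ is a maximum clique, and Theorem~\ref{thm:thm1} guarantees that type $(r^2+1,q)$ is exactly the extremal type for $q=r^3$ with $r$ a non-square.

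Granting this, $N$ equals the number of $3$-dimensional $\F_r$-subspaces meeting some $\F_q$-line in a $2$-dimensional subspace, which I would compute by double counting the pairs $(\ell,V)$ with $\ell$ an $\F_q$-line and $\dim_{\F_r}(V\cap\ell)=2$. By the lemma each qualifying $V$ is paired with a unique $\ell$, so the number of pairs equals $N$ exactly, with no inclusion--exclusion. For a fixed line $\ell\cong\F_{r^3}$, the number of such $V$ is the number of $2$-dimensional $U\subseteq\ell$ times the number of $3$-dimensional $V\supseteq U$ with $V\neq\ell$, namely
\[
\qbin{3}{2}{r}\left(\qbin{4}{1}{r}-1\right)=(r^2+r+1)\cdot r(r^2+r+1)=r(r^2+r+1)^2 .
\]
Summing over all $r^3+1$ lines gives $N=(r^3+1)\,r(r^2+r+1)^2$, and dividing by the per-graph count yields
\[
\frac{N}{r^2+r+1}=r(r^3+1)(r^2+r+1)=r(r^5+r^4+r^3+r^2+r+1),
\]
as claimed.

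I expect the main obstacle to be the second step: establishing the clean intrinsic characterization of non-canonical cliques and, in particular, the lemma that a $3$-dimensional $\F_r$-subspace meets at most one $\F_q$-line in a $2$-dimensional subspace. This is exactly what removes the need for an inclusion--exclusion correction and reduces the whole count to one application of double counting together with elementary Gaussian-binomial bookkeeping.
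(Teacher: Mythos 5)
Your proposal is correct, and its skeleton is the same as the paper's: count the total number $N$ of non-canonical cliques containing $0$ over all extremal graphs, then divide by the per-graph count $r^2+r+1$ from Proposition~\ref{prop:NonCanonicalCliquesCube}, using Corollary~\ref{cor:S} to see that the fibers of the clique-to-graph map all have that size. The only genuine difference is how $N$ is computed. The paper counts ordered triples $(a,b,c)$ generating a subspace of the shape in Proposition~\ref{prop:3} and divides by the number of triples generating a fixed such subspace, obtaining $\frac{(r^6-1)(r^3-r)(r^6-r^3)}{(r^2-1)(r^2-r)(r^3-r^2)}$; you instead double count incidence pairs $(\ell,V)$ with $\ell$ an $\F_q$-line and $\dim_{\F_r}(V\cap\ell)=2$, using the dimension-count lemma that such an $\ell$ is unique for each $V$ (which is implicit in the paper's proof of Proposition~\ref{prop:3}, where exactly one coset $c_{i_0}\F_q$ meets $V$ in $r^2$ points). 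Both yield $N=r(r^3+1)(r^2+r+1)^2$. Your route has two small advantages: the flag count with Gaussian binomials avoids the basis-overcounting computation, and you explicitly verify the converse direction --- that every $3$-dimensional $\F_r$-subspace meeting an $\F_q$-line in a $2$-dimensional subspace really is a non-canonical clique of an extremal graph --- a step the paper leaves implicit (it is essentially contained in the construction in the proof of Theorem~\ref{thm:thm1}). No gaps.
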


\begin{proof}
We count the total number of non-canonical cliques containing 0 among all extremal graphs defined over $\F_{q^2}$. According to Proposition~\ref{prop:3}, each such clique is of the form $a\F_r \oplus b\F_r \oplus c\F_r$, where $a, b,c \in \F_{q^2}$ such that $a,b$ are linearly dependent over $\F_q$, and $a,c$ are linearly independent over $\F_q$. Note that the possible number of such triples $(a,b,c)$ is given by $(r^6-1)(r^3-r)(r^6-r^3)$: we can pick $a$ to be an arbitrary element in $\F_{r^6}^*$, then pick $b$ to an arbitrary element in $a\F_{r^3} \setminus a\F_r$, and finally pick $c$ to be an arbitrary element in $\F_{r^6} \setminus a\F_{r^3}$. On the other hand, given such a clique $V$, a similar argument shows that the number of triples $(a,b,c)$ so that $V=a\F_r \oplus b\F_r \oplus c\F_r$ is given by $(r^2-1)(r^2-r)(r^3-r^2)$. Thus, the total number of such cliques is given by:
\begin{align*}
\frac{(r^6-1)(r^3-r)(r^6-r^3)}{(r^2-1)(r^2-r)(r^3-r^2)}
=\frac{r(r^6-1)(r^2-1)(r^3-1)}{(r^2-1)(r-1)^2}
=\frac{r(r^6-1)(r^2+r+1)}{(r-1)}.
\end{align*}
Note that Proposition~\ref{prop:3} also states that a non-canonical clique in an extremal graph uniquely determines the connection set of the graph, and thus determines the graph. By Proposition~\ref{prop:NonCanonicalCliquesCube}, each extremal graph has $(r^2+r+1)$ non-canonical cliques. Thus, the corollary follows immediately.
\end{proof}

\section{Peisert-type graphs induced by geometric objects}\label{sec:induced}
In the previous sections, we discussed special Peisert-type graphs induced by subspaces. In this section, we consider a few families of Peisert-type graphs that are induced by special geometric objects and explore their connections.

Let $\beta \in \F_{q^2} \setminus \F_q$ so that $\{1,\beta\}$ is basis of $\F_{q^2}$ over $\F_q$. For the ease of notations, we identify $\F_{q^2}$ with the 2-dimensional space $V(2,q)$ over $\F_q$ by identifying $x+y\beta$ with $\begin{pmatrix}
x \\
y
\end{pmatrix}$.

\subsection{Graphs $Y_{q,n}(U)$ induced by hyperplanes}\label{sec:induced1}
Let $q=r^n$, where $r$ is a prime power and $n$ is prime. Consider 
$\F_{r^n}$ as an $n$-space over $\F_r$ and let $U$ be an additive coset of an $n-1$ dimensional subspace of $\F_{r^n}$ (equivalently, let $U$ be a hyperplane in $\AG(n,r)$). Note that $|U| = r^{n-1}$.
Let 
$$S(U) = \mathbb{F}_q^* \cup \bigcup\limits_{u \in U} (u+\beta)\mathbb{F}_q^*.$$ 
Let $Y_{q,n}(U)$ be the Peisert-type graph of type $(r^{n-1}+1,r^n)$ defined by the connection set $S(U)$. Further, put 
$$\mathcal{S}_q = \{S(U)~|~ U \text{~is a hyperplane in~} \AG(n,r)\}.$$ 

For any $a \in \mathbb{F}_q^*, b \in \mathbb{F}_q$, let $\varphi_{a,b}: \mathbb{F}_q \to \mathbb{F}_q$ be the affine 
mapping defined by the rule $x \mapsto ax+b$.
Consider the groups 
$$
H = \{\varphi_{a,b}~|~ a \in \mathbb{F}_q^*, b \in \mathbb{F}_q\},
$$
$$
H_0 = \{\varphi_{a,0}~|~ a \in \mathbb{F}_q^*\},
$$
and
$$
H_1  = \{\varphi_{1,b}~|~ b \in \mathbb{F}_q\}.
$$

\begin{proposition}\label{H0IsTransitive}
The group $H_0$ is a group of automorphisms of $\PG_{n-2}(n-1,r)$ acting transitively on the points and the blocks.
\end{proposition}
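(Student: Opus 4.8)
The plan is to prove the three assertions in turn---that $H_0$ acts by automorphisms, transitively on points, and transitively on blocks---and to reduce the last statement to the first two by means of the orbit lemma for symmetric designs.

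First I would verify that $H_0$ acts as a group of automorphisms of $\PG_{n-2}(n-1,r)$. Identifying $\F_q=\F_{r^n}$ with the underlying $n$-dimensional $\F_r$-vector space of $\PG(n-1,r)$, multiplication by a fixed $a\in\F_q^*$ is an $\F_r$-linear bijection of $\F_{r^n}$, i.e. an element of $\GL(n,r)$: for $\lambda\in\F_r$ and $x,y\in\F_{r^n}$ one has $a(\lambda x+y)=\lambda(ax)+ay$, since $\lambda$ lies in $\F_r\subset\F_{r^n}$. Being $\F_r$-linear and invertible, $\varphi_{a,0}$ permutes the $1$-dimensional $\F_r$-subspaces (the points of the design) and the $(n-1)$-dimensional $\F_r$-subspaces (the blocks) while preserving containment, hence induces a collineation of $\PG(n-1,r)$, that is, an automorphism of $\PG_{n-2}(n-1,r)$. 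Thus $H_0\leq\operatorname{Aut}(\PG_{n-2}(n-1,r))$; I note in passing that the kernel of this action is exactly $\{\varphi_{a,0}:a\in\F_r^*\}$.

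Next I would establish transitivity on points directly. A point of $\PG(n-1,r)$ is $\langle x\rangle_{\F_r}$ for some $x\in\F_{r^n}^*$, and $\varphi_{a,0}$ sends it to $\langle ax\rangle_{\F_r}$. Given two points $\langle x\rangle_{\F_r}$ and $\langle y\rangle_{\F_r}$, the choice $a=yx^{-1}\in\F_q^*$ yields $ax=y$, so $\varphi_{a,0}$ maps the first point to the second. (In fact, modulo the kernel $\F_r^*$ this action is regular, recovering the classical Singer-cycle action on the $(r^n-1)/(r-1)$ points.) For transitivity on blocks I would then invoke the structural facts already recorded: when $n\geq 3$, Remark~\ref{NontrivialSymmetricDesign} guarantees that $\PG_{n-2}(n-1,r)$ is a non-trivial symmetric design, so Lemma~\ref{Orbitlemma}, applied to the subgroup $H_0\leq\operatorname{Aut}(\PG_{n-2}(n-1,r))$, upgrades point-transitivity to block-transitivity at once. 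The degenerate case $n=2$ is immediate, since there the blocks are the $0$-flats, i.e. the points themselves, and block-transitivity coincides with the point-transitivity just established.

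I do not anticipate a genuine obstacle in this argument; it is almost entirely a matter of unwinding definitions. The two points that require care are confirming that scalar multiplication is genuinely $\F_r$-linear---so that $\varphi_{a,0}$ acts as a collineation rather than a mere set bijection---and treating the degenerate case $n=2$ separately, since the symmetric-design version of the orbit lemma does not literally apply there. Should one prefer to avoid the case distinction, transitivity on blocks can instead be proved directly via trace duality, parametrizing hyperplanes as $\{x:\Tr(cx)=0\}$ for $c\in\F_q^*/\F_r^*$ and observing that $\varphi_{a,0}$ sends the hyperplane labeled by $c$ to the one labeled by $ca^{-1}$.
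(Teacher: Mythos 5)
Your proof is correct and follows essentially the same route as the paper's: realize each $\varphi_{a,0}$ as an element of $\GL(n,r)$ acting as a collineation, deduce point-transitivity from transitivity of $H_0$ on $\F_q^*$, and upgrade to block-transitivity via Remark~\ref{NontrivialSymmetricDesign} and Lemma~\ref{Orbitlemma}. Your separate treatment of the degenerate case $n=2$, where the symmetric-design orbit lemma does not literally apply, is a small point of care that the paper's proof glosses over.
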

\begin{proof}
Each element of $H_0$ considered as a non-degenerate matrix from $\GL(n,r)$ can be viewed as an automorphism of $\PG_{n-2}(n-1,r)$. Note that $H_0$ acts transitively on the set of nonzero elements of $\mathbb{F}_q$, which is an $n$-dimensional vector space over $\mathbb{F}_r$. This implies that $H_0$ acts transitively on the points of $\PG_{n-2}(n-1,r)$. By Remark \ref{NontrivialSymmetricDesign} and Lemma \ref{Orbitlemma}, the group $H_0$ also acts transitively on the set of blocks of $\PG_{n-2}(n-1,r)$ (on the set of hyperplanes in $\PG(n-1,r)$).   
\end{proof}

\begin{corollary}\label{HIsTransitiveOnHyperplanes}
The group $H$ acts transitively on the set of hyperplanes of $\AG(n,r)$.
\end{corollary}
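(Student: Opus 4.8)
The plan is to reduce the statement to the transitivity of $H_0$ already established in Proposition~\ref{H0IsTransitive}, using the translation subgroup $H_1$ to move every affine hyperplane onto a linear one. First I would record that $H$ is genuinely a group under composition: a direct computation gives $\varphi_{a,b}\circ\varphi_{c,d}=\varphi_{ac,\,ad+b}$, so $H=\mathrm{AGL}(1,q)$ contains both $H_0$ and $H_1$ as subgroups, and therefore any composition of maps drawn from $H_0\cup H_1$ again lies in $H$.

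Next I would distinguish the two flavors of hyperplanes. Every hyperplane of $\AG(n,r)$ is an additive coset $W+v$ of an $(n-1)$-dimensional $\mathbb{F}_r$-subspace $W\subset\mathbb{F}_q$, and the linear hyperplanes (those through $0$, i.e.\ the subspaces $W$ themselves) are exactly the blocks of $\PG_{n-2}(n-1,r)$. The translation $\varphi_{1,-v}\in H_1$ sends $W+v$ to $W$, so every affine hyperplane is $H_1$-equivalent to a linear one. On the linear hyperplanes $H_0$ acts: since each $\varphi_{a,0}$ is multiplication by $a\in\mathbb{F}_q^*$, it is an $\mathbb{F}_r$-linear bijection of $\mathbb{F}_q$ and hence carries the subspace $W$ to the subspace $aW$, and Proposition~\ref{H0IsTransitive} guarantees that $H_0$ is transitive on these blocks.

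Finally I would assemble the pieces. Given two affine hyperplanes $W_1+v_1$ and $W_2+v_2$, I translate the first to $W_1$ by $\varphi_{1,-v_1}$, map $W_1$ to $W_2$ by a suitable $\varphi_{a,0}\in H_0$ (which exists by Proposition~\ref{H0IsTransitive}), and translate $W_2$ to $W_2+v_2$ by $\varphi_{1,v_2}$; the composite lies in $H$ and sends $W_1+v_1$ to $W_2+v_2$, giving transitivity.

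The argument carries no essential obstacle: the only point requiring a small check is the identification of the blocks of $\PG_{n-2}(n-1,r)$ with the codimension-one $\mathbb{F}_r$-subspaces of $\mathbb{F}_q$, together with the observation that $H_0$ acts by $\mathbb{F}_r$-linear maps, so that it genuinely preserves and permutes those blocks. Everything else is the standard decomposition of an affine transformation into its linear part and a translation.
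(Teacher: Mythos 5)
Your proposal is correct and follows essentially the same route as the paper: the paper likewise combines the transitivity of the translation subgroup $H_1$ within each parallel class with the transitivity of $H_0$ on the hyperplanes through the origin (Proposition~\ref{H0IsTransitive}). Your version just makes the composition $\varphi_{1,v_2}\circ\varphi_{a,0}\circ\varphi_{1,-v_1}$ explicit, which is a fine way to present the same argument.
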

\begin{proof}
It follows from two facts.
First, the subgroup $H_1$ stabilizes each class of parallel hyperplanes in $\AG(n,r)$ and acts transitively on the set of hyperplanes of each class. Second, by Proposition \ref{H0IsTransitive}, the subgroup $H_0$ acts transitively on the set of hyperplanes of $\AG(n,r)$ containing zero vector and thus acts transitively on the set of classes of parallel hyperplanes.      
\end{proof}

Note that the connection set $S(U)$ of $Y_{q,n}(U)$ can be written as follows:
$$
S(U) = 
\begin{pmatrix}
1 \\
0
\end{pmatrix} 
\mathbb{F}_q^*
\cup
\bigcup\limits_{u \in U}
\begin{pmatrix}
u \\
1
\end{pmatrix} \mathbb{F}_q^*.
$$

Let $A \in \GL(2,q)$ be a matrix and $v \in V(2,q)$ be a vector. Let 
$$
\varphi_{A,v}: V(2,q) \to V(2,q)
$$ 
be a mapping defined by the following rule: for any $u \in V(2,q)$,
$$
\varphi_{A,v}(u) = Au+v.
$$
It is clear that for any $A \in \GL(2,q)$ and $v \in V(2,q)$, the mapping  $\varphi_{A,v}$ is a collineation (automorphism) of $\AG(2,q)$. Let $$G=\left\{\varphi_{A,v} ~|~ A \in \GL(2,q), v \in V(2,q)\right\}.$$
For any $a \in \mathbb{F}_{q}^*$, $b\in \mathbb{F}_q$, let
$\psi_{a,b}:\mathbb{F}_{q^2} \to \mathbb{F}_{q^2}$ be the mapping defined by the following rule: 
$$
\psi_{a,b}\bigg(
\begin{pmatrix}
\gamma_1 \\
\gamma_2
\end{pmatrix}
\bigg)
=
\begin{pmatrix}
a & b\\
0 & 1
\end{pmatrix}
\begin{pmatrix}
\gamma_1 \\
\gamma_2
\end{pmatrix}
=
\begin{pmatrix}
a\gamma_1+b\gamma_1\\
\gamma_2 
\end{pmatrix}.
$$
Let $$L = \{\psi_{a,b}~|~a \in \mathbb{F}_q^*, b \in \mathbb{F}_q\}.$$ Note that $L$ is a subgroup in $G$. Moreover, $L$ is a subgroup of the stabilizer (in $G$) of the zero vector.

\begin{lemma}\label{LIsTransitiveOnScal}
The group $L$ preserves the set $\mathcal{S}_q$ and acts on it transitively.    
\end{lemma}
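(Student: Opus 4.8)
The plan is to reduce the $L$-action on $\mathcal{S}_q$ to the $H$-action on the hyperplanes of $\AG(n,r)$, whose transitivity is already recorded in Corollary~\ref{HIsTransitiveOnHyperplanes}. Concretely, I would first prove the single identity
$$
\psi_{a,b}\big(S(U)\big) = S\big(\varphi_{a,b}(U)\big)
$$
for every $\psi_{a,b} \in L$ and every hyperplane $U \subset \AG(n,r)$; once this is established, both the preservation of $\mathcal{S}_q$ and the transitivity of the action drop out immediately.

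To verify the identity, I would exploit that $\psi_{a,b}$ is $\F_q$-linear on $V(2,q)$ (it is multiplication by a matrix in $\GL(2,q)$). Under the identification of $\F_{q^2}$ with $V(2,q)$, multiplication by a scalar in $\F_q$ corresponds to scalar multiplication of column vectors, so $\psi_{a,b}$ commutes with the $\F_q^*$-scalar action and hence permutes $\F_q^*$-cosets, sending $w\,\F_q^*$ to $\psi_{a,b}(w)\,\F_q^*$. It therefore suffices to track the coset representatives appearing in
$$
S(U) = \begin{pmatrix} 0 \\ 1 \end{pmatrix}\F_q^* \cup \bigcup_{u \in U}\begin{pmatrix} 1 \\ u \end{pmatrix}\F_q^*.
$$
A direct computation gives $\psi_{a,b}\!\left(\begin{pmatrix} 0 \\ 1 \end{pmatrix}\right) = \begin{pmatrix} 0 \\ a \end{pmatrix}$, whose coset equals $\begin{pmatrix} 0 \\ 1 \end{pmatrix}\F_q^*$ since $a \in \F_q^*$, and $\psi_{a,b}\!\left(\begin{pmatrix} 1 \\ u \end{pmatrix}\right) = \begin{pmatrix} 1 \\ au+b \end{pmatrix}$. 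Thus in each coset of the second family the parameter $u$ is replaced by $au+b = \varphi_{a,b}(u)$, which is exactly the connection set $S(\varphi_{a,b}(U))$.

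With the identity in hand, I would conclude as follows. \emph{Preservation}: the map $\varphi_{a,b}\colon x \mapsto ax+b$ is multiplication by $a \in \F_q^*$ (an $\F_r$-linear, invertible map of $\F_{r^n}$) followed by a translation, hence an $\F_r$-affine collineation of $\AG(n,r)$; it therefore sends the hyperplane $U$ to a hyperplane $\varphi_{a,b}(U)$, so $S(\varphi_{a,b}(U)) \in \mathcal{S}_q$. \emph{Transitivity}: given $S(U), S(U') \in \mathcal{S}_q$, Corollary~\ref{HIsTransitiveOnHyperplanes} furnishes $\varphi_{a,b} \in H$ with $\varphi_{a,b}(U) = U'$, and then $\psi_{a,b}(S(U)) = S(U')$. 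I do not anticipate a real obstacle; the only delicate point is the bookkeeping of the correspondence between $L$ acting $\F_q$-linearly on $V(2,q)$ and $H$ acting $\F_r$-affinely on $\AG(n,r)$—in particular, checking that $(a,b) \mapsto \psi_{a,b}$ and $(a,b)\mapsto\varphi_{a,b}$ follow the same group law, so that the two actions are genuinely intertwined and transitivity transfers faithfully.
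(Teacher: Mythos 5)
Your proposal is correct and follows essentially the same route as the paper: both establish the identity $\psi_{a,b}(S(U)) = S(aU+b)$ by tracking the coset representatives $\begin{pmatrix}0\\1\end{pmatrix}$ and $\begin{pmatrix}1\\u\end{pmatrix}$ under the $\F_q$-linear map $\psi_{a,b}$, and then deduce preservation and transitivity from Corollary~\ref{HIsTransitiveOnHyperplanes}. Your write-up merely spells out details (the coset-permuting property of $\psi_{a,b}$ and the compatibility of the two actions) that the paper leaves implicit.
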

\begin{proof}
Let $\psi_{a,b} \in L$ and let $U$ be a hyperplane in $\AG(n,r)$. 
Then
$$
\psi_{a,b}(S(U))=\begin{pmatrix}
1 \\
0
\end{pmatrix} 
\mathbb{F}_q^*
\cup
\bigcup\limits_{u \in U}
\begin{pmatrix}
au+b \\
1
\end{pmatrix} \mathbb{F}_q^* =S(aU+b).
$$
Thus, the group $L$ preserves the set $\mathcal{S}_q$. The transitivity of the action then follows from Corollary \ref{HIsTransitiveOnHyperplanes}.
\end{proof}

\begin{corollary}\label{hyperplaneinduceiso}
For any two hyperplanes $U_1,U_2$ in $\AG(n,r)$, the graphs $Y_{q,n}(U_1)$ and $Y_{q,n}(U_2)$ are isomorphic.    
\end{corollary}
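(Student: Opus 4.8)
The plan is to reduce the isomorphism statement to the transitivity result already established in Lemma~\ref{LIsTransitiveOnScal}. The key observation is that the graphs $Y_{q,n}(U_1)$ and $Y_{q,n}(U_2)$ are both Cayley graphs on $\F_{q^2}^+$, with connection sets $S(U_1)$ and $S(U_2)$ respectively, both members of the family $\mathcal{S}_q$. Since $L \subset G$ is a group of collineations of $\AG(2,q)$ fixing the zero vector, and more importantly each $\psi_{a,b} \in L$ is a linear map (it is given by left multiplication by an invertible matrix), it induces an additive group automorphism of $\F_{q^2}^+$.

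First I would invoke Lemma~\ref{LIsTransitiveOnScal} to produce an element $\psi_{a,b} \in L$ such that $\psi_{a,b}(S(U_1)) = S(U_2)$; the transitivity of $L$ on $\mathcal{S}_q$ guarantees such an element exists for any pair of hyperplanes $U_1, U_2$. Next I would observe that $\psi_{a,b}$, being left multiplication by a fixed matrix in $\GL(2,q)$, is an automorphism of the additive group $\F_{q^2}^+ \cong V(2,q)$. Then the conclusion follows immediately from Lemma~\ref{lem:Cayley_iso}: since we have a group automorphism $\varphi = \psi_{a,b}$ of $\F_{q^2}^+$ with $\varphi(S(U_1)) = S(U_2)$, the two Cayley graphs $\operatorname{Cay}(\F_{q^2}^+, S(U_1))$ and $\operatorname{Cay}(\F_{q^2}^+, S(U_2))$ are isomorphic.

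I do not expect a substantive obstacle here, since all the real work has been done in Corollary~\ref{HIsTransitiveOnHyperplanes} and Lemma~\ref{LIsTransitiveOnScal}; this corollary is essentially a bookkeeping step that packages those results through the Cayley-graph isomorphism criterion. The only point requiring a moment of care is verifying that $\psi_{a,b}$ genuinely qualifies as a \emph{group} automorphism of $\F_{q^2}^+$ (not merely a collineation of the affine plane), but this is transparent because $\psi_{a,b}$ is linear over $\F_q$ and hence additive. Thus the proof amounts to chaining Lemma~\ref{LIsTransitiveOnScal} with Lemma~\ref{lem:Cayley_iso}.
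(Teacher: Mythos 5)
Your proposal is correct and matches the paper's proof, which likewise cites Lemma~\ref{LIsTransitiveOnScal} together with Lemma~\ref{lem:Cayley_iso} and the fact that $L$ fixes the zero vector (equivalently, your observation that each $\psi_{a,b}$ is $\F_q$-linear and hence an automorphism of $\F_{q^2}^+$). No differences worth noting.
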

\begin{proof}
It follows from Lemma~\ref{lem:Cayley_iso}, Lemma \ref{LIsTransitiveOnScal}, and the fact that $L$ stabilizes the zero vector. 
\end{proof}

\begin{corollary}\label{cor:iso3}
Let $q=r^3$, where $r \geq 3$ is a non-square. Then all extremal Peisert-type graphs defined over $\F_{q^2}$ are isomorphic.
\end{corollary}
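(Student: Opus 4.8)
The plan is to show that every extremal Peisert-type graph over $\F_{q^2}$ is isomorphic to one of the graphs $Y_{q,3}(U)$ introduced in Section~\ref{sec:induced1}, and then to conclude by the transitivity result already established in Corollary~\ref{hyperplaneinduceiso}. Since $q=r^3$ with $n=3$ prime, the family $Y_{q,3}(U)$ is exactly the relevant one, and each such graph is of type $(r^2+1,r^3)$, matching the type of an extremal graph given by Theorem~\ref{thm:thm1}. Thus the whole task reduces to recognizing an arbitrary extremal graph, up to additive-group automorphism of $\F_{q^2}^+$, as a graph of the form $\operatorname{Cay}(\F_{q^2}^+, S(U))$ for some hyperplane $U$ of $\AG(3,r)$.

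For the reduction I would start from the normal form supplied by Proposition~\ref{prop:3}: if $X$ is extremal, its connection set is $S=(a\F_r \oplus b\F_r \oplus c\F_r \setminus\{0\})\F_q^*$ with $a,b$ linearly dependent over $\F_q$ and $a,c$ linearly independent over $\F_q$. First I would apply multiplication by $a^{-1}$, which is an $\F_{q^2}$-linear (hence additive) automorphism of $\F_{q^2}^+$; by Lemma~\ref{lem:Cayley_iso} this produces an isomorphic graph with connection set $(\F_r \oplus b'\F_r \oplus c'\F_r \setminus\{0\})\F_q^*$, where $b'=b/a\in\F_q\setminus\F_r$ and $c'=c/a\notin\F_q$. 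Setting $U=\F_r\oplus b'\F_r$, a genuine $2$-dimensional $\F_r$-subspace of $\F_q$ and hence a hyperplane of $\AG(3,r)$, Corollary~\ref{cor:S} lets me rewrite this connection set as $\F_q^*\sqcup\bigsqcup_{u\in U}(u+c')\F_q^*$. The only remaining discrepancy with $S(U)=\F_q^*\cup\bigcup_{u\in U}(u+\beta)\F_q^*$ is that $c'$ replaces the fixed element $\beta$; to remove it I would define the $\F_q$-linear map $T\colon \F_{q^2}\to\F_{q^2}$ by $T(x+yc')=x+y\beta$ for $x,y\in\F_q$ (well defined and bijective since both $\{1,c'\}$ and $\{1,\beta\}$ are $\F_q$-bases). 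One checks directly that $T$ fixes $\F_q^*$ and sends $(u+c')\F_q^*$ to $(u+\beta)\F_q^*$ for each $u\in U$, so $T$ is an additive automorphism of $\F_{q^2}^+$ carrying the connection set onto $S(U)$. Another application of Lemma~\ref{lem:Cayley_iso} then gives $X\cong Y_{q,3}(U)$, and Corollary~\ref{hyperplaneinduceiso} finishes the argument, since all the graphs $Y_{q,3}(U)$ are mutually isomorphic.

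The substantive content has already been absorbed into the results invoked: Proposition~\ref{prop:3} (which pins down the algebraic shape of the connection set) and Corollary~\ref{hyperplaneinduceiso} (the transitivity of the $L$-action on $\mathcal{S}_q$). Consequently I do not expect a serious obstacle here; the only genuinely new ingredient is the explicit $\F_q$-linear isomorphism matching the normal form to $S(U)$, and the mild bookkeeping needed to verify that the successive additive automorphisms indeed send $S$ onto $S(U)$. The main point to state carefully is that both multiplication by $a^{-1}$ and the basis-change map $T$ are automorphisms of the \emph{additive} group $\F_{q^2}^+$, which is precisely what Lemma~\ref{lem:Cayley_iso} requires.
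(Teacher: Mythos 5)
Your proof is correct and follows essentially the same route as the paper: normalize the connection set from Proposition~\ref{prop:3} by dividing by $a$, recognize the result as $Y_{q,3}(U)$ for the hyperplane $U=\F_r\oplus(b/a)\F_r$, and invoke Corollary~\ref{hyperplaneinduceiso}. The only difference is that the paper simply takes $\beta=c/a$ in the definition of $Y_{q,3}(U)$, whereas you keep $\beta$ fixed and supply the explicit $\F_q$-linear basis-change $T$ sending $c'$ to $\beta$ — a slightly more careful treatment of the same step.
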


\begin{proof}
Let $X$ be an extremal graph. In view of Corollary~\ref{hyperplaneinduceiso}, it suffices to show that $X$ is isomorphic to the graph $Y_{q,3}(U)$ for some hyperplane $U$ in $\AG(3,r)$. By Proposition~\ref{prop:3}, there exist $a,b,c \in \F_{q^2}$, such that $S=(a \F_{r} \oplus b \F_{r} \oplus c \F_{r} \setminus \{0\})\F_q^*$, where $a, b,c \in \F_{q^2}$ such that $a,b$ are linearly dependent over $\F_q$, and $a,c$ are linearly independent over $\F_q$. Let $\beta=c/a$ and $U=\F_r \oplus \frac{b}{a}\F_r$, and let 
\begin{equation}
S'=\F_q^* \sqcup \bigsqcup_{u \in U} (u+\beta) \F_q^*.
\end{equation}
Then we have $S=aS'$. Let $X'$ be the Peisert-type graph with connection set $S'$. Then $X$ is isomorphic to $X'$ by Lemma~\ref{lem:Cayley_iso}. Note that $X'=Y_{q,3}(U)$ and thus the proof is complete.  
\end{proof}

Now we are ready to present the proof of Theorem~\ref{thm:thm3}.
\begin{proof}[Proof of Theorem~\ref{thm:thm3}]
When $r=2$, we have verified the statement of the theorem. In particular, it turned out that all Peisert-type graphs of type $(5,8)$ are extremal.

Next we assume that $r \geq 3$. The theorem follows from Corollary~\ref{cor:numberofgraphsCube}, Corollary~\ref{cor:iso3}, and Proposition \ref{prop:NonCanonicalCliquesCube}.
\end{proof}

When $q$ is a square, a similar argument as in the proof of Corollary~\ref{cor:iso3} shows that all extremal Peisert-type graphs defined over $\F_{q^2}$ are isomorphic; we refer to Proposition~\ref{prop:iso} for a stronger statement. In general, extremal Peisert-type graphs of the same order may not be unique. The following example gives two non-isomorphic extremal graphs over $\F_{q^2}$, where $q=32$.

\begin{example}\label{ex:q=32}
Let $q=2^5$. Let $\varepsilon$ be a primitive element in $\mathbb{F}_{q}$ such that $\varepsilon$ is a root of the irreducible polynomial $t^5+t^2+1 \in \mathbb{F}_2[t]$. Let $\beta$ be a root of the irreducible polynomial $t^2+t+1 \in \mathbb{F}_2[t]$; note that $\beta \in \F_{q^2} \setminus \F_q$. Consider the Peisert-type graph $X_1$ induced by the $\F_2$-subspace $V_1$ generated by the elements $\{ 1, \varepsilon, \beta, \varepsilon^{16}\beta, \varepsilon^{21} + \varepsilon^{9}\beta\}$, that is, $X_1=\operatorname{Cay}(\F_{q^2}^+, V_1\F_q \setminus \{0\})$. Similarly, consider the Peisert-type graph $X_2$ induced by the $\F_2$-subspace $V_2$ generated by the elements $\{ 1, \varepsilon,  \varepsilon^2,  \varepsilon^3, \beta\}$. 

For $i \in \{1,2\}$, from the definition of $X_i$, it is easy to verify that $V_i$ is a non-canonical clique in $X_i$ and $X_i$ is a Peisert-type graph of type $(17, 32)$. Thus, in view of Theorem~\ref{thm:thm1}, $X_1$ and $X_2$ are extremal graphs. We have verified that $X_1$ is not isomorphic to $X_2$, showing that there are at least $2$ non-isomorphic extremal Peisert-type graphs defined over $\F_{q^2}$.
\end{example}

\begin{remark}
Let $q=r^n$, where $n$ is a prime, and let $U$ be a hyperplane in $\AG(n,r)$. Similar to the proof of Corollary~\ref{cor:iso3}, one can verify that $Y_{q,n}(U)$ is a Peisert-type graph of type $(r^{n-1}+1,r^n)$ without the strict-EKR property. Note that there might be multiple ways to write $q=r^n$, in particular, if $n$ is the smallest such prime, then $Y_{q,n}(U)$ is an extremal Peisert-type graph in view of Theorem~\ref{thm:thm1}.
\end{remark}

\subsection{Graphs $X_q$ induced by special ovals}\label{X_q}
Let $q = r^2$. Note that $\mathbb{F}_r$ is a hyperplane (a line) in $\AG(2,r)$. Consider the extremal Peisert-type graph  $Y_{q,2}(\mathbb{F}_r)$ of type $(r+1,q)$ (that is, we put $U = \mathbb{F}_r$ in the definition of $Y_{q,2}(U)$). 

We define a new Peisert-type graph $X_q$ induced by a special oval. Consider the oval $Q := \{\gamma \in \mathbb{F}_q^*~|~ \gamma^{r+1} = 1\}$. Note that $Q$ played an important role in the construction of maximal cliques in Paley graphs and generalized Paley graphs \cite{GKSV18, GSY23}. Let $S = \bigcup\limits_{\delta \in Q} (\delta+\beta)\mathbb{F}_q^*$. Let $X_q$ be the Peisert-type graph of type $(\sqrt{q}+1,q)$ defined by the connection set $S$. 

\begin{proposition}\label{YqXq_iso}
The graphs $Y_{q,2}(\mathbb{F}_r)$ and $X_q$ are isomorphic.   
\end{proposition}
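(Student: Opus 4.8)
The plan is to invoke Lemma~\ref{lem:induce_isomorphism}. Since both $Y_{q,2}(\mathbb{F}_r)$ and $X_q$ are Peisert-type graphs of type $(\sqrt{q}+1,q)=(r+1,q)$, it suffices to prove that their direction sets $\mathcal{D}(Y_{q,2}(\mathbb{F}_r))$ and $\mathcal{D}(X_q)$ are projectively equivalent, i.e. lie in the same $\PGL(2,q)$-orbit on subsets of $\PG(1,q)$.

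First I would read the two direction sets off the connection sets, treating the direction of a vector $\begin{pmatrix}a\\b\end{pmatrix}$ as the projective point $[a:b]$ it spans. In the vector form of $S(\mathbb{F}_r)$, the summand $\begin{pmatrix}0\\1\end{pmatrix}\mathbb{F}_q^*$ contributes $[0:1]$ and each $\begin{pmatrix}1\\u\end{pmatrix}\mathbb{F}_q^*$ contributes $[1:u]$, so
$$\mathcal{D}(Y_{q,2}(\mathbb{F}_r))=\{[0:1]\}\cup\{[1:u]:u\in\mathbb{F}_r\},$$
which is exactly the standard Baer subline $\PG(1,r)\subset\PG(1,q)$. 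On the other hand $\delta+\beta$ corresponds to $\begin{pmatrix}1\\\delta\end{pmatrix}$, so
$$\mathcal{D}(X_q)=\{[1:\delta]:\delta\in Q\},$$
the ``norm-one circle'', where $Q$ is the group of $(r+1)$-th roots of unity in $\mathbb{F}_q^*$ (so $|Q|=r+1$). Both sets have $r+1$ points.

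The heart of the argument is to exhibit an explicit element of $\PGL(2,q)$ carrying the first set to the second; this is a finite-field Cayley transform. Fix $\omega\in\mathbb{F}_q\setminus\mathbb{F}_r$ (such $\omega$ exists as $\mathbb{F}_q=\mathbb{F}_{r^2}\supsetneq\mathbb{F}_r$; note $\beta$ itself does not qualify since $\beta\notin\mathbb{F}_q$), and set $M=\begin{pmatrix}-\omega^r&1\\-\omega&1\end{pmatrix}\in\GL(2,q)$, which is invertible since $\det M=\omega-\omega^r\neq0$. Using the action convention of Corollary~\ref{cor:3iso}, $M$ sends $[1:t]$ to $[\,t-\omega^r:t-\omega\,]=[1:\mu(t)]$ with $\mu(t)=\tfrac{t-\omega}{t-\omega^r}$ (well-defined since $\omega^r\notin\mathbb{F}_r$ forces $t-\omega^r\neq0$ for $t\in\mathbb{F}_r$), and sends $[0:1]$ to $[1:1]$. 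The key computation is that $\mu$ maps $\mathbb{F}_r\cup\{\infty\}$ into $Q$: for $t\in\mathbb{F}_r$, using $t^r=t$ and $\omega^{r^2}=\omega$ one gets $\mu(t)^r=\tfrac{t-\omega^r}{t-\omega}$, hence
$$\mu(t)^{\,1+r}=\mu(t)\,\mu(t)^r=\frac{t-\omega}{t-\omega^r}\cdot\frac{t-\omega^r}{t-\omega}=1,$$
and $\mu(\infty)=1\in Q$ as well. Since $\mu$ is injective and $|\mathbb{F}_r\cup\{\infty\}|=r+1=|Q|$, its restriction is a bijection onto $Q$, so $M\big(\mathcal{D}(Y_{q,2}(\mathbb{F}_r))\big)=\mathcal{D}(X_q)$.

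This establishes the projective equivalence, and Lemma~\ref{lem:induce_isomorphism} then immediately gives $Y_{q,2}(\mathbb{F}_r)\cong X_q$. The mathematical content is entirely the norm identity exhibiting $Q$ as a Baer subline; I expect the only delicate point to be bookkeeping the coordinate conventions (the identification $x+y\beta\leftrightarrow\begin{pmatrix}y\\x\end{pmatrix}$ together with the $\PGL(2,q)$-action) so that the two direction sets and the action of $M$ are expressed consistently, which is routine and in any case harmless since a coordinate swap is itself a projective transformation.
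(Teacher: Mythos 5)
Your proof is correct and is essentially the paper's argument: both exhibit an explicit element of $\GL(2,q)$ realizing a fractional-linear bijection between the Baer subline $\F_r\cup\{\infty\}$ and the norm-one circle $Q$, with the same key computation (the norm identity via Frobenius, $\delta^{r+1}=1$ resp.\ $\mu(t)^{r+1}=1$). The only cosmetic differences are that the paper maps $Q$ onto $\F_r\cup\{\infty\}$ via $A=\left(\begin{smallmatrix}1&1\\ \gamma&\gamma^r\end{smallmatrix}\right)$ and concludes with Lemma~\ref{lem:Cayley_iso} on connection sets, whereas you go in the opposite direction and phrase the conclusion through the direction-set Lemma~\ref{lem:induce_isomorphism}, which the paper derives from the same Lemma~\ref{lem:Cayley_iso}.
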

\begin{proof}
\medskip
Note that the connection set of $X_q$ can be written as
$$
S = \bigcup\limits_{\delta \in Q} 
\begin{pmatrix}
\delta\\
1 
\end{pmatrix}\mathbb{F}_q^*.
$$
On the other hand, the connection set $S(\mathbb{F}_r)$ of $Y_{q,2}(\mathbb{F}_r)$ can be written as
$$
S(\mathbb{F}_r) = 
\begin{pmatrix}
1 \\
0
\end{pmatrix} 
\mathbb{F}_q^*
\cup
\bigcup\limits_{u \in \mathbb{F}_r}
\begin{pmatrix}
u \\
1
\end{pmatrix} \mathbb{F}_q^*.
$$

Let $\gamma \in \mathbb{F}_{q}$ such that $\gamma^r \ne \gamma$. Then the matrix
$$
A =
\begin{pmatrix}
\gamma^r & \gamma\\
1 & 1
\end{pmatrix}
$$
is non-singular. Consider the collineation $\varphi_{A,0}$ and denote it by $\varphi_A$ for short. In view of the finiteness of the set $S$ and Lemma~\ref{lem:Cayley_iso}, it suffices to show that
$\varphi_A\left(
\begin{pmatrix}
\delta \\
1
\end{pmatrix}
\right) \in S(\mathbb{F}_r)$ for any $\delta \in Q$. 
We have
$$
\begin{pmatrix}
\gamma^r & \gamma\\
1 & 1
\end{pmatrix}
\begin{pmatrix}
\delta \\
1
\end{pmatrix}
=
\begin{pmatrix}
\gamma+\delta\gamma^r \\
1+\delta
\end{pmatrix}.
$$

Consider the following two cases.

\textbf{Case 1:} $\delta = - 1$. Then
$$
\begin{pmatrix}
\gamma+\delta\gamma^r \\
1+\delta
\end{pmatrix}
=
\begin{pmatrix}
\gamma-\gamma^r \\
0
\end{pmatrix}.
$$
Since $\gamma-\gamma^r \ne 0$, the image lies in $S(\mathbb{F}_r)$.

\medskip
\textbf{Case 2:} $\delta \ne - 1$. Note that
$$
\begin{pmatrix}
\gamma+\delta\gamma^r \\
1+\delta
\end{pmatrix} \in S(\mathbb{F}_r) \text{ if and only if } 
\begin{pmatrix}
\frac{\gamma+\delta\gamma^r}{1+\delta} \\
1
\end{pmatrix} \in S(\mathbb{F}_r),$$
and it suffices to show that
$\frac{\gamma+\delta\gamma^r}{1+\delta}$ belongs to $\mathbb{F}_{r}$. To do this, we show that
$$
\left(\frac{\gamma+\delta\gamma^r}{1+\delta}\right)^r 
=
\frac{\gamma+\delta\gamma^r}{1+\delta}.
$$
Indeed, we have
$$
\left(\frac{\gamma+\delta\gamma^r}{1+\delta}\right)^r = 
\frac{\gamma^r+\delta^r\gamma}{1+\delta^r}.
$$
Since $\delta \in Q$, we have $\delta^{r+1} = 1$ and, consequently,
$\delta^r = \frac{1}{\delta}$. Thus,
$$
\frac{\gamma^r+\delta^r\gamma}{1+\delta^r} = 
\frac{\gamma^r+\frac{1}{\delta}\gamma}{1+\frac{1}{\delta}} =
\frac{\delta\gamma^r+\gamma}{\delta+1}
$$
holds, as required.
\end{proof}

\section{An isomorphism between $X_q$ and the affine polar graph $VO^+(4,\sqrt{q})$}\label{sec:polar}

Let $q = r^2$. Further, let $g(t) \in \mathbb{F}_r[t]$ be a quadratic irreducible polynomial and $\alpha$ be its root. We have $\mathbb{F}_q = \{x+y\alpha~|~ x,y \in \mathbb{F}_r\}$, and we naturally view $\mathbb{F}_q$ as a $2$-dimensional vector space over $\mathbb{F}_r$ as follows:
for any $x,y \in \mathbb{F}_r$, we identify the element $x+y\alpha \in \mathbb{F}_q$ and the vector $(x,y) \in V(2,r)$.

The graph $X_q$ was defined on the vectors of $V(2,q)$ in Section~\ref{X_q}. We identify the vector $(x_1+y_1\alpha,x_2+y_2\alpha) \in V(2,q)$ and the vector $(x_1,y_1,x_2,y_2) \in V(4,r)$. In Sections \ref{subsecOddq} and \ref{subsecEvenq}, we redefine $X_q$ on the vectors of $V(4,r)$.

\subsection{Odd $q$}\label{subsecOddq}
In this section, we consider the case when $q$ is odd. Let $d$ be a non-square element in $\mathbb{F}_r^*$. Then we may assume that $g(t) = t^2 - d$ and $\alpha^2 = d$.

Recall that the connection set $S$ of $X_q$ is given by
$$
S = \{(\gamma_1,\gamma_2)~|~ \gamma_1,\gamma_2 \in \mathbb{F}_q, \gamma_1 \ne 0, \gamma_2 \ne 0, \gamma_1^{r+1} = \gamma_2^{r+1}\}.
$$
Taking into account the identification of elements $V(2,q)$ and $V(4,r)$, we rewrite $S$ as follows:
$$
S = \{(x_1,y_1,x_2,y_2)~|~ x_1^2-y_1^2d = x_2^2-y_2^2d\},
$$
where $x_1,y_1,x_2,y_2$ run over $\mathbb{F}_r$ and 
$(x_1,y_1) \ne (0,0), (x_2,y_2) \ne (0,0)$. 
Or, equivalently,
$$
S = \{(x_1,y_1,x_2,y_2)~|~ x_1^2-y_1^2d - x_2^2+y_2^2d = 0\},
$$
where $x_1,y_1,x_2,y_2$ run over $\mathbb{F}_r$ and 
$(x_1,y_1) \ne (0,0), (x_2,y_2) \ne (0,0)$. 
Thus, the graph $X_q$ can be viewed as the graph $Y_{f_1}$ associated with quadratic form 
$$f_1(x_1',y_1',x_2',y_2') = (x_1')^2-(y_1')^2d - (x_2')^2+(y_2')^2d.$$

On the other hand, the graph $VO^+(4,r)$ is also defined on the vectors of $V(4,r)$ with connection set 
$$
\{(x_1,y_1,x_2,y_2)~|~ x_1y_1+x_2y_2 = 0\},
$$
where $x_1,y_1,x_2,y_2$ run over $\mathbb{F}_r$.
Thus, the graph $VO^+(4,r)$ can be viewed as the graph $Y_{f_2}$ associated with quadratic form 
$$f_2(x_1,y_1,x_2,y_2) = x_1y_1 + x_2y_2.$$

\begin{proposition}\label{IsomorphismOddr}
    If $r$ is an odd prime power, then the graphs $X_{r^2}$ and $VO^+(4,r)$ are isomorphic.
\end{proposition}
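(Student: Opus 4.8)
The plan is to exhibit an explicit equivalence of the two quadratic forms $f_1$ and $f_2$ on $V(4,r)$ and then invoke Lemma~\ref{EquivalentFormsGiveIsomorphicGraphs}. Since the graphs $X_{r^2}$ and $VO^+(4,r)$ have already been realized as $Y_{f_1}$ and $Y_{f_2}$ respectively, it suffices to show that $f_1$ and $f_2$ are equivalent over $\F_r$, i.e.\ that there is an invertible linear change of coordinates carrying one to the other. This reduces the whole proposition to a statement in the classification of non-degenerate quadratic forms in four variables over a finite field of odd characteristic.

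First I would observe that both forms are non-degenerate quadratic forms in $4$ variables over $\F_r$. The form $f_2(x_1,y_1,x_2,y_2)=x_1y_1+x_2y_2$ is the standard hyperbolic (plus-type) form, its set of zeros being exactly the hyperbolic quadric $Q^+$. For $f_1$, I would analyze its type directly: the two-variable form $x^2 - d y^2$ with $d$ a non-square is anisotropic (it is the norm form of $\F_{r^2}/\F_r$, which vanishes only at the origin), so $f_1 = (x_1^2 - d y_1^2) - (x_2^2 - d y_2^2)$ is the orthogonal sum of an anisotropic binary form and its negative. A binary form $\beta \perp (-\beta)$ is hyperbolic, hence $f_1$ is itself a sum of two hyperbolic planes and therefore also of plus type, with the same discriminant and the same $4$-dimensional hyperbolic shape as $f_2$. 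By the classification of quadratic forms over finite fields (Witt's theorem: over $\F_r$ with $r$ odd, a non-degenerate quadratic form is determined up to equivalence by its dimension and discriminant, equivalently by dimension and type $\pm$), $f_1$ and $f_2$ are equivalent.

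Alternatively, and more explicitly, I would produce the change of variables by hand, which avoids citing the general classification. Writing $x_1^2 - d y_1^2 = (x_1 + \alpha y_1)(x_1 - \alpha y_1)$ formally (or more safely, using the identity $u^2 - d v^2 = (u-v)(u+v) + (1-d)v^2$ recombined suitably), one sets new coordinates $u_1 = x_1 + x_2,\ v_1 = x_1 - x_2$ and $u_2, v_2$ built from $y_1, y_2$ so that the cross terms reorganize $f_1$ into $u_1 v_1 + d\,(\text{hyperbolic in } y)$; after a final rescaling by a factor absorbing $d$ (legitimate since $d \neq 0$) one reaches exactly $x_1'y_1' + x_2'y_2'$. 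I would record the resulting matrix $B \in \GL(4,r)$ explicitly so that $f_1(Bx) = f_2(x)$, then conclude by Lemma~\ref{EquivalentFormsGiveIsomorphicGraphs} that $Y_{f_1} \cong Y_{f_2}$, i.e.\ $X_{r^2} \cong VO^+(4,r)$.

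The main obstacle I anticipate is purely bookkeeping: pinning down the correct linear substitution (and checking its invertibility) that simultaneously diagonalizes the sign pattern of $f_1$ into two hyperbolic planes while keeping track of the non-square $d$. The conceptual content — that both forms are non-degenerate plus-type forms of the same dimension and hence equivalent — is immediate from Witt's classification; the only real work is exhibiting the explicit $B$, or else citing the classification theorem to bypass the computation entirely. I would lean on the latter for brevity, presenting the type computation for $f_1$ as the crux and leaving the explicit matrix as a routine verification.
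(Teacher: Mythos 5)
Your proposal is correct, and its skeleton coincides with the paper's: realize the two graphs as $Y_{f_1}$ and $Y_{f_2}$ for the forms $f_1=x_1^2-dy_1^2-x_2^2+dy_2^2$ and $f_2=x_1y_1+x_2y_2$, show the forms are equivalent, and conclude via Lemma~\ref{EquivalentFormsGiveIsomorphicGraphs}. The difference lies in how the equivalence is certified. The paper writes down an explicit matrix $B\in\GL(4,r)$ (with $\det B=4d\neq 0$, which is where oddness of $r$ enters) and verifies $f_1(Bx)=f_2(x)$ by direct computation, keeping the argument entirely self-contained. You instead invoke the classification of non-degenerate quadratic forms over $\F_r$ ($r$ odd): since $x^2-dy^2$ is the anisotropic norm form of $\F_{r^2}/\F_r$, the form $f_1$ is $\beta\perp(-\beta)$ for a non-degenerate binary $\beta$, hence hyperbolic of dimension $4$, hence plus-type with square discriminant, matching $f_2$. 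This is a valid and conceptually cleaner route (it explains \emph{why} an explicit $B$ must exist, and generalizes to higher dimensions with no extra work), at the cost of importing Witt's classification as an external ingredient; the paper's computation buys independence from that theory and an explicit isomorphism one can use downstream. Either way the proposition follows, so the proposal stands as written provided you either cite the classification precisely or actually record the matrix you sketch in your second route.
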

\begin{proof}
In view of Lemma \ref{EquivalentFormsGiveIsomorphicGraphs}, it suffices to show that the forms $f_1$ and $f_2$ are equivalent.
Consider the change of variables
$$
\begin{cases}
x_1 = x_1' + x_2',\\
y_1 = x_1' - x_2',\\
x_2 = dy_1' + dy_2',\\
y_2 = -y_1' + y_2'.
\end{cases}
$$
given by the matrix
$$
B =
\begin{pmatrix}
1 & 0 & 1 & 0\\
1 & 0 & -1 & 0\\
0 & d & 0 & d\\
0 & -1 & 0 & 1
\end{pmatrix}.
$$
It is easy to see that $det(B) = 4d \ne 0$ and 
$$
\underbrace{x_1y_1+x_2y_2}_{f_1} = \underbrace{(x_1')^2-(y_1')^2d - (x_2')^2+(y_2')^2d}_{f_2},
$$
which completes the proof. 
\end{proof}

\subsection{Even $q$}\label{subsecEvenq}

Let $r = 2^s$, where $s$ is a positive integer, and $q = r^2$.  Recall the {\em absolute trace map} $\Tr_{\F_r}:\F_r \to \F_2$ is defined to be
$$
\Tr_{\F_r}(x)=\alpha+\alpha^2+\cdots+\alpha^{2^{s-1}}, \quad \forall x \in \F_r.
$$
Choose any $d \in \mathbb{F}_r$ such that $\Tr_{\F_r}(d) = 1$. Then the polynomial $g(t) = t^2 + t + d \in \mathbb{F}_r[t]$ is irreducible (see \cite[Corollary 3.79]{LN97}). Let $\alpha$ be a root of $g(t)$, that is, $\alpha^2 = \alpha + d$. Then we have $\mathbb{F}_q = \{x+y\alpha~|~ x,y \in \mathbb{F}_r\}$.

Let $N: \mathbb{F}_q^* \to \mathbb{F}_r^*$ be the norm mapping from $\mathbb{F}_q^*$ onto $\mathbb{F}_r^*$ defined by the following rule:
$$
\text{for any~} \gamma \in \mathbb{F}_q^*, \quad N(\gamma) = \gamma^{r+1}.
$$

\begin{lemma}\label{NormEvenq}
The following statements hold.\\
{\rm (1)} For any $x+y\alpha \in \mathbb{F}_q^*$ and for any positive integer k, the equality
$$
(x+y\alpha)^{2^k} = x^{2^k} + y^{2^k}\bigg(\sum\limits_{j = 0}^{k-1}d^{2^j}\bigg) + y^{2^k}\alpha.
$$
holds.\\
{\rm (2)} For any $x+y\alpha \in \mathbb{F}_q^*$, the equality
$$(x+y\alpha)^r = x + y +y\alpha$$
holds.\\
{\rm (3)} For any $x+y\alpha \in \mathbb{F}_q^*$, the equality
$$N(x+y\alpha) = x^2 + xy +y^2d$$
holds.
\end{lemma}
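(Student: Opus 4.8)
The plan is to establish the three formulas in the order given, using part (1) as the engine for parts (2) and (3). The key algebraic facts I will repeatedly invoke are the ``freshman's dream'' identity $(u+v)^2 = u^2+v^2$ valid in characteristic $2$, the defining relation $\alpha^2 = \alpha + d$, and the fact that every element of $\mathbb{F}_r$ is fixed by the $r$-power (i.e.\ $2^s$-power) Frobenius map.

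For part (1), I would argue by induction on $k$. For the base case $k=1$, freshman's dream gives $(x+y\alpha)^2 = x^2 + y^2\alpha^2$, and substituting $\alpha^2 = \alpha+d$ yields $x^2 + y^2 d + y^2\alpha$, which matches the claimed formula since $\sum_{j=0}^{0} d^{2^j} = d$. For the inductive step, write $(x+y\alpha)^{2^k} = A + B\alpha$ with $A = x^{2^k} + y^{2^k}\sum_{j=0}^{k-1}d^{2^j}$ and $B = y^{2^k}$, then square using freshman's dream and $\alpha^2=\alpha+d$ to obtain $(A+B\alpha)^2 = (A^2 + B^2 d) + B^2\alpha$. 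The only bookkeeping is that squaring the inner sum shifts the exponent index, $\bigl(\sum_{j=0}^{k-1}d^{2^j}\bigr)^2 = \sum_{j=1}^{k}d^{2^j}$, while the leftover term $B^2 d = y^{2^{k+1}}d^{2^0}$ supplies the missing $j=0$ contribution; collecting terms gives the coefficient $x^{2^{k+1}} + y^{2^{k+1}}\sum_{j=0}^{k}d^{2^j}$ together with $y^{2^{k+1}}\alpha$, which is exactly the statement for $k+1$.

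Part (2) is then immediate by specializing $k=s$, so that $2^k = r$. Since $x,y \in \mathbb{F}_r$, we have $x^r = x$ and $y^r = y$, and the inner sum becomes $\sum_{j=0}^{s-1}d^{2^j} = \Tr_{\mathbb{F}_r}(d)$, which equals $1$ by the choice of $d$. Hence $(x+y\alpha)^r = x + y + y\alpha$.

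Finally, for part (3) I would write $N(x+y\alpha) = (x+y\alpha)^{r+1} = (x+y\alpha)^r(x+y\alpha)$, substitute the formula from part (2), and expand the product $(x+y+y\alpha)(x+y\alpha)$. Collecting terms and cancelling the repeated $xy\alpha$ contributions in characteristic $2$ leaves $x^2 + xy + y^2(\alpha+\alpha^2)$; since $\alpha + \alpha^2 = d$ by the defining relation, this equals $x^2 + xy + y^2 d$, as claimed. The argument is entirely elementary; the only step demanding real care is the index shift in the inductive step of part (1), which is where I expect the bulk of the verification to lie, whereas parts (2) and (3) are short consequences.
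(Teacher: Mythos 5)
Your proof is correct and follows essentially the same route as the paper: induction via the characteristic-$2$ squaring identity and $\alpha^2=\alpha+d$ for part (1), specialization at $k=s$ together with $\Tr_{\F_r}(d)=1$ for part (2), and direct expansion of $(x+y+y\alpha)(x+y\alpha)$ for part (3). If anything, your inductive step for (1) is spelled out more carefully than the paper's, which only exhibits the cases $k=1,2$ and appeals to "the same process."
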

\begin{proof}
(1) Note that for each $z,w \in \F_r$, we have $(z+w\alpha)^2=(z^2 + w^2d) + w^2\alpha$. It follows that 
$$
(x+y\alpha)^4 = \big((x^2 + y^2d) + y^2\alpha\big)^2=x^4+y^4d^2+y^4d +y^4 \alpha =\big(x^4+y^4(d+d^2)\big)+y^4\alpha. 
$$
By repeating the same process, the required formula can be proved easily by induction.

(2) Applying item (1) in the case $k = s$, we get
$$
(x+y\alpha)^r = x^r + y^r\bigg(\sum\limits_{j = 0}^{s-1}d^{2^j}\bigg) + y^r\alpha=x^r + y^r\Tr_{\F_r}(d) + y^r\alpha.
$$
Taking into account that $\Tr_{\F_r}(d) = 1$, $x^r = x$ and $y^r = y$, we get
$$
(x+y\alpha)^r = x + y +y\alpha.
$$

(3) In view of item (2), we have
$$
N(x+y\alpha) = (x+y\alpha)^{r+1} = (x+y\alpha)^r(x+y\alpha) = (x+y+y\alpha)(x+y\alpha) = x^2+xy+y^2d.
$$

\end{proof}

Recall that the connection set $S$ of $X_q$ is given by
$$
S = \{(\gamma_1,\gamma_2)~|~ \gamma_1,\gamma_2 \in \mathbb{F}_q, \gamma_1 \ne 0, \gamma_2 \ne 0, \gamma_1^{r+1} = \gamma_2^{r+1}\}.
$$
Taking into account the identification of elements $V(2,q)$ and $V(4,r)$, using Lemma \ref{NormEvenq}(3), we rewrite $S$ as follows:
$$
S = \{(x_1,y_1,x_2,y_2)~|~ x_1^2 + x_1y_1 + y_1^2d = x_2^2 + x_2y_2 + y_2^2d\},
$$
where $x_1,y_1,x_2,y_2$ run over $\mathbb{F}_r$ and 
$(x_1,y_1) \ne (0,0), (x_2,y_2) \ne (0,0)$. 
Or, equivalently,
$$
S = \{(x_1,y_1,x_2,y_2)~|~ x_1^2 + x_1y_1 + y_1^2d + x_2^2 + x_2y_2 + y_2^2d = 0\},
$$
where $x_1,y_1,x_2,y_2$ run over $\mathbb{F}_r$ and 
$(x_1,y_1) \ne (0,0), (x_2,y_2) \ne (0,0)$. 
Thus, the graph $X_q$ can be viewed as the graph $Y_{f_1}$ associated with quadratic form 
$$f_1(x_1',y_1',x_2',y_2') = (x_1')^2 + x_1'y_1' + (y_1')^2d + (x_2')^2 + x_2'y_2' + (y_2')^2d.$$

On the other hand, the graph $VO^+(4,r)$ is also defined on the vectors of $V(4,r)$ with connection set 
$$
\{(x_1,y_1,x_2,y_2)~|~ x_1y_1+x_2y_2 = 0\},
$$
where $x_1,y_1,x_2,y_2$ run over $\mathbb{F}_r$.
Thus, the graph $VO^+(4,r)$ can be viewed as the graph $Y_{f_2}$ associated with quadratic form 
$$f_2(x_1,y_1,x_2,y_2) = x_1y_1 + x_2y_2.$$

\begin{proposition}\label{IsomorphismEvenr}
    If $r$ is a power of 2, then the graphs $X_{r^2}$ and $VO^+(4,r)$ are isomorphic.
\end{proposition}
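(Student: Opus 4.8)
The plan is to mirror the odd-characteristic argument of Proposition~\ref{IsomorphismOddr}: by Lemma~\ref{EquivalentFormsGiveIsomorphicGraphs} it suffices to prove that the quaternary quadratic forms $f_1$ and $f_2$ over $\F_r$ are equivalent. First I would record that both forms are nondegenerate. Writing a vector of $V(4,r)$ as a pair $(\gamma_1,\gamma_2)$ with $\gamma_i\in\F_q$, Lemma~\ref{NormEvenq}(3) gives $N(x+y\alpha)=x^2+xy+y^2d$, so $f_1(\gamma_1,\gamma_2)=N(\gamma_1)+N(\gamma_2)$; the polar bilinear form of the norm is $B_N(\gamma,\eta)=\Tr_{\F_q/\F_r}(\gamma\eta^r)$, which is alternating and nondegenerate. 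Hence the polar form of $f_1$ is the orthogonal sum $B_N\perp B_N$, again nondegenerate, and $f_2=x_1y_1+x_2y_2$ is plainly nondegenerate as well. Over a finite field a nondegenerate quadratic form in an even number of variables is determined up to equivalence by its type (that is, by whether it is hyperbolic or elliptic, equivalently by its Witt index), so it remains only to show that $f_1$ and $f_2$ are of the same type.

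Since $f_2$ is the standard hyperbolic form, it is of $+$ type, so I would show that $f_1$ is hyperbolic as well by exhibiting a totally isotropic subspace of the maximal dimension $2$. Here characteristic $2$ is the crucial ingredient: the diagonal $D=\{(\gamma,\gamma):\gamma\in\F_q\}$ is a $2$-dimensional $\F_r$-subspace on which $f_1(\gamma,\gamma)=N(\gamma)+N(\gamma)=0$, and on which the polar form $B_N\perp B_N$ vanishes identically for the same reason. A nondegenerate quaternary quadratic form possessing a totally isotropic $2$-space has Witt index $2$ and is therefore of $+$ type. Comparing with $f_2$ yields $f_1\sim f_2$, and Lemma~\ref{EquivalentFormsGiveIsomorphicGraphs} completes the proof.

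If instead one prefers an explicit change of variables in the style of the matrix $B$ used for odd $q$, I would build a hyperbolic basis directly from the oval $Q$. Choosing any $c\in Q\setminus\{1\}$ (possible since $|Q|=r+1\ge 3$), the subspace $D_c=\{(\gamma,c\gamma):\gamma\in\F_q\}$ is also totally isotropic, because $f_1(\gamma,c\gamma)=N(\gamma)\bigl(1+N(c)\bigr)=0$ as $N(c)=1$ and $1+1=0$, while $D\cap D_c=\{0\}$ since $c\neq 1$. Thus $V(4,r)=D\oplus D_c$ is a decomposition into two complementary maximal totally isotropic subspaces, and expressing a vector in an $\F_r$-basis of $D$ together with an $\F_r$-basis of $D_c$ turns $f_1$ into the perfect pairing $\Tr_{\F_q/\F_r}\!\bigl(\gamma\,\kappa\delta^r\bigr)$ between the two $2$-dimensional spaces (with $\kappa$ a fixed nonzero constant depending on $c$), i.e.\ into $u_1v_1+u_2v_2$ after passing to dual bases. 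Reading off this linear map produces the desired matrix $B\in\GL(4,r)$.

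The main obstacle is purely the bookkeeping of characteristic $2$: one must be careful that nondegeneracy and total isotropy are taken in the sense appropriate to quadratic rather than merely bilinear forms, and, if the explicit route is chosen, carry out the base change between the $(\gamma,\delta)$ description and the original coordinates $(x_1,y_1,x_2,y_2)$ without trace errors. For this reason I would adopt the conceptual route via the Witt index, which sidesteps these computations and isolates the one essential new phenomenon in even characteristic, namely that the diagonal is automatically totally isotropic.
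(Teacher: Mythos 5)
Your proof is correct, but it takes a genuinely different route from the paper's. The paper proves Proposition~\ref{IsomorphismEvenr} exactly as in the odd case: it writes down an explicit matrix $B \in \GL(4,r)$ (with $\det B = 1$) and verifies by direct substitution that the change of variables carries $f_1$ to $f_2$, then invokes Lemma~\ref{EquivalentFormsGiveIsomorphicGraphs}. You instead appeal to the classification of nonsingular quadratic forms over finite fields in even dimension by type, reduce the problem to showing that $f_1$ is hyperbolic, and observe that in characteristic $2$ the diagonal $D=\{(\gamma,\gamma)\}$ is a totally singular $2$-space because $f_1(\gamma,\gamma)=N(\gamma)+N(\gamma)=0$; your identification of the polar form as $\Tr_{\F_q/\F_r}(\gamma\eta^r)\perp\Tr_{\F_q/\F_r}(\gamma\eta^r)$, hence nondegenerate, is what makes the classification applicable, and your care in distinguishing total singularity of $Q$ from isotropy of the bilinear form is exactly the right precaution in characteristic $2$. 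What your approach buys is conceptual transparency — it isolates the single phenomenon ($2N(\gamma)=0$) responsible for the isomorphism and avoids any coordinate bookkeeping — at the cost of importing a standard classification theorem that the paper does not cite; the paper's computation is self-contained and hands the reader a concrete isomorphism, which your second, optional paragraph (splitting $V=D\oplus D_c$ for $c\in Q\setminus\{1\}$ and passing to dual bases for the pairing $\Tr_{\F_q/\F_r}(\gamma(1+c^r)\delta^r)$) would also recover. Both arguments are sound; yours would also make it evident, with no extra work, why the analogous diagonal trick fails in odd characteristic, where the paper instead needs the non-square $d$ and the signs in $f_1$.
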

\begin{proof}
In view of Lemma \ref{EquivalentFormsGiveIsomorphicGraphs}, it suffices to show that the forms $f_1$ and $f_2$ are equivalent.
Consider the change of variables
$$
\begin{cases}
x_1 = x_1' + y_1'+ x_2',\\
y_1 = x_1' + x_2',\\
x_2 = y_1' + y_2',\\
y_2 = dy_1' + x_2' + dy_2'.
\end{cases}
$$
given by the matrix
$$
B =
\begin{pmatrix}
1 & 1 & 1 & 0\\
1 & 0 & 1 & 0\\
0 & 1 & 0 & 1\\
0 & d & 1 & d
\end{pmatrix}.
$$
It is easy to see that $det(B) = 1 \ne 0$ and 
$$
\underbrace{x_1y_1+x_2y_2}_{f_1} = \underbrace{(x_1')^2 + x_1'y_1' + (y_1')^2d + (x_2')^2 + x_2'y_2' + (y_2')^2d}_{f_2},
$$
which completes the proof. 
\end{proof}

\subsection{Implications to extremal Peisert-type graphs}

\begin{proposition}\label{prop:iso}
Let $q$ be a square. If $X$ is an extremal Peisert-type graph of type $(\sqrt{q}+1,q)$, then 
$X$ is isomorphic to $X_q$ and the affine polar graph $VO^+(4,\sqrt{q})$.
\end{proposition}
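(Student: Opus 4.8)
The plan is to assemble the chain of isomorphisms
$X \cong Y_{q,2}(\F_{\sqrt q}) \cong X_q \cong VO^+(4,\sqrt q)$, with the only genuine work being the first link. Write $r=\sqrt q$. The last two links are already in hand: $Y_{q,2}(\F_r)\cong X_q$ by Proposition~\ref{YqXq_iso}, and $X_q\cong VO^+(4,r)$ by Proposition~\ref{IsomorphismOddr} (odd $r$) or Proposition~\ref{IsomorphismEvenr} (even $r$). Hence it suffices to prove that an arbitrary extremal graph $X$ of type $(r+1,q)$ is isomorphic to $Y_{q,2}(\F_r)$.

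First I would invoke Proposition~\ref{prop:2} (valid for $q\geq 9$) to write the connection set of $X$ as $S=(a\F_r\oplus b\F_r\setminus\{0\})\F_q^*$ with $a,b\in\F_{q^2}$ linearly independent over $\F_q$. Setting $V=a\F_r\oplus b\F_r$, so that $S=(V\setminus\{0\})\F_q^*$, the goal is to exhibit a group automorphism $\varphi$ of $\F_{q^2}^+$ with $\varphi(S)=S(\F_r)$, after which Lemma~\ref{lem:Cayley_iso} finishes the argument. This is exactly the normalization strategy carried out for the cube case in Corollary~\ref{cor:iso3}.

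I would build $\varphi$ in two stages. Multiplication by $a^{-1}$ is an additive automorphism sending $S$ to $(\F_r\oplus\beta_0\F_r\setminus\{0\})\F_q^*$ with $\beta_0:=b/a\in\F_{q^2}\setminus\F_q$; a short computation (identical to the analysis of $S(U)$ in Section~\ref{sec:induced1}) rewrites this as $\F_q^*\cup\bigcup_{u\in\F_r}(u+\beta_0)\F_q^*$. Then I would take the $\F_q$-linear bijection $T$ of $\F_{q^2}$ determined by $T(1)=1$ and $T(\beta_0)=\beta$, where $\beta$ is the fixed element used in the definition of $Y_{q,2}$. Since $T$ is additive and fixes $\F_q$ pointwise, it sends $\F_q^*$ to $\F_q^*$ and carries $(u+\beta_0)\F_q^*$ to $(u+\beta)\F_q^*$ for each $u\in\F_r$, whence $T$ maps the intermediate set to $S(\F_r)$. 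Composing, $\varphi=T\circ m_{a^{-1}}$ satisfies $\varphi(S)=S(\F_r)$, so $X\cong Y_{q,2}(\F_r)$ by Lemma~\ref{lem:Cayley_iso}, and the chain above yields $X\cong X_q\cong VO^+(4,\sqrt q)$.

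I do not expect any step to present a serious obstacle: the structural heavy lifting is already supplied by Proposition~\ref{prop:2} and the isomorphism chain, and what remains is the bookkeeping of the normalization map, essentially the same in spirit as Corollary~\ref{cor:iso3}. The only point requiring care is the range of validity, since Proposition~\ref{prop:2} is stated for $q\geq 9$, leaving the boundary case $q=4$ (type $(3,4)$) to be handled separately. There the conclusion is immediate: by Corollary~\ref{cor:3iso} all Peisert-type graphs of type $(3,4)$ are isomorphic, and $Y_{4,2}(\F_2)$ is one of them, so $X\cong Y_{4,2}(\F_2)$ and the same chain applies with $r=2$ via Proposition~\ref{IsomorphismEvenr}.
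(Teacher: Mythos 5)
Your proposal is correct and follows essentially the same route as the paper: Proposition~\ref{prop:2} to normalize the connection set, Lemma~\ref{lem:Cayley_iso} to get $X\cong Y_{q,2}(\F_{\sqrt q})$, and then the chain through Proposition~\ref{YqXq_iso} and Propositions~\ref{IsomorphismOddr}/\ref{IsomorphismEvenr}. The only differences are cosmetic (the paper divides by $b$ and reads off $Y_{q,2}(\F_{\sqrt q})$ directly, whereas you divide by $a$ and add an explicit $\F_q$-linear change of basis), and your separate treatment of $q=4$ via Corollary~\ref{cor:3iso} is a sensible precaution that the paper handles at the level of Theorem~\ref{thm:thm2} instead.
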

\begin{proof}
Let $S$ be the connection set of $X$. 
By Proposition~\ref{prop:2}, $S=(a \F_{\sqrt{q}} \oplus b \F_{\sqrt{q}})\F_q \setminus \{0\}$, where $a, b \in \F_{q^2}$ are linearly independent over $\F_q$. It follows that 
$$
S=b \F_q^* \cup \bigcup_{\lambda \in \F_{\sqrt{q}}} (a+\lambda b)\F_q^*
$$
Let $\beta=a/b$; then $\beta \in \F_{q^2} \setminus \F_q$. Consider
$$
S'=\F_q^* \cup \bigcup_{\lambda \in \F_{\sqrt{q}}} (\beta+\lambda)\F_q^*
$$
so that $bS'=S$. Let $X'$ be the Peisert-type graph with connection set $S'$. Then $X$ is isomorphic to $X'$ by Lemma~\ref{lem:Cayley_iso}. Note that $X'=Y_{q,2}(\F_{\sqrt{q}})$ and thus Proposition~\ref{YqXq_iso} implies that $X'$ is isomorphic to $X_q$.  Finally, Proposition~\ref{IsomorphismOddr} and Proposition~\ref{IsomorphismEvenr} imply that $X_q$ is isomorphic to $VO^+(4,\sqrt{q})$. We conclude that $X$ is isomorphic to $VO^+(4,\sqrt{q})$.
\end{proof}

\begin{corollary}\label{cor:noHM}
Let $q$ be a square. If $X$ is an extremal Peisert-type graph of type $(\sqrt{q}+1,q)$, then 
there is no Hilton-Milner type result: all maximal cliques in $X$ all maximum.
\end{corollary}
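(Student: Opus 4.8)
The plan is to transfer the problem to the affine polar graph $VO^+(4,\sqrt{q})$ via Proposition~\ref{prop:iso}, where the maximal cliques have already been described. First I would recall that, by Proposition~\ref{prop:iso}, $X$ is isomorphic to $VO^+(4,\sqrt{q})$, so it suffices to show that every maximal clique in $VO^+(4,\sqrt{q})$ has exactly $q$ vertices, which is the clique number of $X$.

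Next, I would invoke Lemma~\ref{MaximalCliquesInVO}. Since $VO^+(4,\sqrt{q})$ is vertex-transitive (its translation group acts transitively on vertices), every maximal clique is a translate of a maximal clique through $0$, and these latter cliques are exactly the generators of the hyperbolic quadric $Q^+$. A generator is a totally isotropic subspace of maximal dimension $e=2$ over $\F_{\sqrt{q}}$; hence it is a $2$-dimensional $\F_{\sqrt{q}}$-subspace and contains precisely $(\sqrt{q})^2=q$ vectors. Consequently, every maximal clique in $VO^+(4,\sqrt{q})$, being a coset of a generator, has exactly $q$ elements.

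Finally, since the clique number of a Peisert-type graph of type $(\sqrt{q}+1,q)$ equals $q$ by the Delsarte-Hoffman bound, the maximum cliques of $X$ have size $q$. As every maximal clique already attains this value, there can be no maximal clique of strictly smaller size; equivalently, every maximal clique is maximum. This is exactly the assertion that no Hilton-Milner type result holds for $X$, completing the argument.

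I do not expect a serious obstacle here, since the structural classification of maximal cliques in $VO^+(4,r)$ is already supplied by Lemma~\ref{MaximalCliquesInVO}. The only points requiring care are the passage from maximal cliques through $0$ to \emph{all} maximal cliques, which is handled by vertex-transitivity, and the elementary observation that a $2$-dimensional $\F_{\sqrt{q}}$-subspace has $q$ elements so that the size matches the clique number.
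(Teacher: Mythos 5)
Your proof is correct, but it takes a genuinely different route from the paper's. The paper argues by counting: it combines Proposition~\ref{prop:NonCanonicalCliques} (there are $\sqrt{q}+1$ non-canonical maximum cliques through $0$) with the obvious count of $\sqrt{q}+1$ canonical ones to get $2(\sqrt{q}+1)$ maximum cliques through $0$, and then invokes Lemma~\ref{lem:maximalclique} to see that $VO^+(4,\sqrt{q})$ has exactly $2(\sqrt{q}+1)$ maximal cliques through $0$; since every maximum clique is maximal and the two counts agree, the two families coincide. You instead go straight for the size of an arbitrary maximal clique: by Lemma~\ref{MaximalCliquesInVO} (really the underlying fact from [EGP19, Lemma 2.10] that maximal cliques are cosets of generators) every maximal clique is a coset of a $2$-dimensional totally singular $\F_{\sqrt{q}}$-subspace and hence has exactly $q$ vertices, which is the clique number. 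Your argument is shorter and bypasses both Proposition~\ref{prop:NonCanonicalCliques} and the generator count in Lemma~\ref{lem:maximalclique}; what the paper's counting argument buys in exchange is the explicit identification of the $2(\sqrt{q}+1)$ maximum cliques through $0$ with the generators of the quadric, which is then used for the orbit statement in Theorem~\ref{thm:thm2} via Lemma~\ref{lem:equivalent}. Both arguments rest on the same key geometric input, namely the correspondence between maximal cliques and cosets of generators, and both correctly reduce to the case of cliques through $0$ by vertex-transitivity.
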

\begin{proof}
We have shown in Proposition~\ref{prop:iso} that $X$ is isomorphic to the affine polar graph $VO^+(4,\sqrt{q})$. By Proposition~\ref{prop:NonCanonicalCliques}, the number of non-canonical cliques in $X$ that contains $0$ is $\sqrt{q}+1$. On the other hand, it is clear that the number of canonical maximum cliques in $X$ that contains $0$ is $\sqrt{q}+1$. Thus, the number of maximum cliques containing $0$ is $2(\sqrt{q}+1)$. However, Lemma~\ref{lem:maximalclique} implies that the number of maximal cliques contains $0$ is $2(\sqrt{q}+1)$. This means all maximal cliques in $X$ are maximum.  
\end{proof}

Rank $3$ graphs are of special interest in algebraic graph theory; we refer to the book \cite{BV22} for extensive discussions. It is known that Paley graphs and Peisert graphs are rank $3$ graphs, since they are self-complementary and symmetric. In particular, the Paley graphs and Peisert graphs which are of Peisert-type are rank $3$ graphs. Interestingly, when $q$ is a square, all extremal Peisert-type graphs defined over $\F_{q^2}$ are rank 3 graphs.

\begin{corollary}\label{XqIsRank3}
Let $q$ be a square. If $X$ is an extremal Peisert-type graph of type $(\sqrt{q}+1,q)$, then $X$ is a rank 3 graph.   
\end{corollary}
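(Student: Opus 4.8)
The plan is to reduce to the affine polar graph and then exhibit a large enough automorphism group by hand. By Proposition~\ref{prop:iso}, $X$ is isomorphic to $VO^+(4,\sqrt{q})$, so it suffices to prove that the affine polar graph $VO^+(4,r)$ is a rank $3$ graph, where $r=\sqrt{q}$. Recall that this means $\operatorname{Aut}(VO^+(4,r))$ is transitive on vertices, on ordered pairs of adjacent vertices, and on ordered pairs of non-adjacent vertices; equivalently, a vertex stabilizer has exactly three orbits. Since any subgroup of $\operatorname{Aut}$ that already realizes these three orbits forces the full automorphism group to have rank $3$ (the edge set and the non-edge set are automorphism-invariant), it is enough to produce one subgroup with the desired orbit structure. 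I would work on $V=V(4,r)$ with the hyperbolic form $Q(x)=x_1x_2+x_3x_4$ and adjacency $x\sim y \iff Q(x-y)=0$.

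First I would record the relevant automorphisms. The translation group $\{x\mapsto x+v : v\in V\}$ consists of automorphisms (it preserves $Q(x-y)$) and acts regularly, hence transitively, on vertices; so I may fix the base vertex $0$. The stabilizer of $0$ contains every invertible linear map $g$ that is a \emph{similarity} of $Q$, i.e.\ $Q(gx)=\mu\,Q(x)$ for some fixed $\mu\in\mathbb{F}_r^*$: such a $g$ preserves the zero-set of $Q$ and therefore the adjacency. This group $GO^+(4,r)$ contains the orthogonal group $O^+(4,r)$ (the case $\mu=1$) and also, crucially, similarities of arbitrary factor: for each $\mu\in\mathbb{F}_r^*$ the map $(x_1,x_2,x_3,x_4)\mapsto(\mu x_1,x_2,\mu x_3,x_4)$ satisfies $Q(gx)=\mu Q(x)$.

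Next I would analyze the orbits at $0$. The neighbors of $0$ are exactly the nonzero singular vectors $\{x\neq 0 : Q(x)=0\}$; by Witt's extension theorem the orthogonal group $O^+(4,r)$ is transitive on them, since any bijection between two totally singular lines is an isometry and extends to a global one. The non-neighbors of $0$ are the nonzero non-singular vectors, which split according to the value $Q(x)\in\mathbb{F}_r^*$; Witt's theorem again makes $O^+(4,r)$ transitive on each level set $\{Q=c\}$, while a similarity of factor $\mu$ maps $\{Q=c\}$ onto $\{Q=\mu c\}$, so letting $\mu$ range over $\mathbb{F}_r^*$ fuses all the level sets into a single $GO^+(4,r)$-orbit. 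Hence the stabilizer of $0$ has exactly the three orbits $\{0\}$, neighbors, and non-neighbors, and $VO^+(4,r)$ is a rank $3$ graph.

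The main obstacle is precisely the transitivity on the non-neighbors. The orthogonal group alone yields $r-1$ distinct orbits (one per nonzero norm), and scalar maps $x\mapsto\lambda x$ only merge norms within a single square-class since $Q(\lambda x)=\lambda^2 Q(x)$; for odd $r$ this still leaves two orbits. The key point that resolves this is the existence of similarities of \emph{non-square} factor, supplied by the explicit map above, which collapse all nonzero norm values into one orbit. Everything else is a routine application of Witt's theorem, valid in both odd and even characteristic for the nonsingular hyperbolic form. One may alternatively simply cite the standard fact that affine polar graphs are rank $3$ graphs \cite{BV22}.
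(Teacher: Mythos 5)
Your proposal is correct, and it diverges from the paper only in how the key fact about affine polar graphs is handled. The paper's proof of this corollary is a two-line citation: it invokes Proposition~\ref{prop:iso} to identify $X$ with $VO^+(4,\sqrt{q})$ and then simply cites \cite[p.~98]{BV22} for the statement that $VO^+(4,r)$ is a rank~$3$ graph. You perform the same reduction via Proposition~\ref{prop:iso}, but then prove the rank~$3$ property of $VO^+(4,r)$ from scratch: translations give vertex-transitivity, Witt's extension theorem gives transitivity of $O^+(4,r)$ on the nonzero singular vectors and on each level set $\{Q=c\}$, and the explicit similarities $(x_1,x_2,x_3,x_4)\mapsto(\mu x_1,x_2,\mu x_3,x_4)$ of factor $\mu$ fuse the level sets for all $c\in\mathbb{F}_r^*$ into a single orbit of the stabilizer of $0$. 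Your observation that a subgroup realizing the three orbits suffices is also sound, since the full stabilizer of a vertex must preserve the neighbour set and the non-neighbour set and therefore cannot fuse them further. What your route buys is self-containedness and an explicit identification of the one genuinely delicate point --- that scalars and isometries alone leave two orbits on non-neighbours when $r$ is odd, and that similarities of non-square factor are what collapse them; what the paper's route buys is brevity, at the cost of outsourcing exactly that point to \cite{BV22}. Both are valid; you even note the citation as an alternative at the end.
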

\begin{proof}
The corollary follows from Proposition~\ref{prop:iso} and the fact that 
$VO^+(4,\sqrt{q})$ is a rank 3 graph (see \cite[page 98]{BV22}).
\end{proof}

\section{Tightness of the weight-distribution bound for both non-principal eigenvalues of the graphs $X_q$}\label{sec:eigenfunction}

In this section, we show that if $q$ is a square, then the weight-distribution bound is tight for both non-principal eigenvalues of all extremal graphs defined on $\F_{q^2}$. It suffices to consider the graph $X_q$ in view of Proposition~\ref{prop:iso}. Recall that $X_q$ is the Peisert-type graph of type $(\sqrt{q}+1,q)$ defined by the connection set $S$, where 
$$
S = \bigcup\limits_{\delta \in Q} (\delta+\beta)\mathbb{F}_q^*, \quad Q= \{\gamma \in \mathbb{F}_q^*~|~ \gamma^{\sqrt{q}+1} = 1\}, \quad \text{and } \beta \in \F_{q^2} \setminus \F_q.
$$
By Lemma~\ref{SRG}, the two non-principal eigenvalues of $X_q$ are given by $q-\sqrt{q}$ and $-\sqrt{q}-1$. Lemma~\ref{WDBsrg} then gives the weight-distribution bound for eigenfunctions.

Consider the following 2-dimensional $\mathbb{F}_{\sqrt{q}}$-subspace in $\mathbb{F}_{q^2}$:
\begin{align*}
        C_q &= (1+\beta)\mathbb{F}_{\sqrt{q}} + (\varepsilon^{\sqrt{q}}+\varepsilon\beta)\mathbb{F}_{\sqrt{q}} \\
&= \{(1+\beta)a + (\varepsilon^{\sqrt{q}}+\varepsilon\beta)b ~|~ a,b \in \mathbb{F}_{\sqrt{q}}\} \\
&= \{a+b\varepsilon^{\sqrt{q}}+(a+b\varepsilon)\beta  ~|~ a,b \in \mathbb{F}_{\sqrt{q}}\} \\
&= \{(a+b\varepsilon)^{\sqrt{q}}+(a+b\varepsilon)\beta  ~|~ a,b \in \mathbb{F}_{\sqrt{q}}\} \\
&=\{\gamma^{\sqrt{q}}+\gamma\beta  ~|~ \gamma \in \mathbb{F}_{q}\} \\
&=\{\gamma(\gamma^{\sqrt{q}-1}+\beta)  ~|~ \gamma \in \mathbb{F}_{q}\} \subset S \cup \{0\}.
    \end{align*}

\begin{proposition}
The subspace $C_q$ is a non-canonical clique in $X_q$. Moreover, the intersection of any canonical clique in $X_q$ containing 0 and $C_q$ has exactly $\sqrt{q}-1$ nonzero elements.
\end{proposition}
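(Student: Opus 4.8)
The plan is to read off both assertions from the parametrization $C_q=\{\gamma^{\sqrt{q}}+\gamma\beta : \gamma\in\mathbb{F}_q\}$ established in the display immediately above, together with the direction correspondence of Lemma~\ref{lem:correspondence}. First I would record why $C_q$ is a maximum clique. The chain of equalities preceding the statement already exhibits $C_q\subseteq S\cup\{0\}$; concretely, for $\gamma\neq 0$ one sets $\delta=\gamma^{\sqrt{q}-1}$, so that $\delta^{\sqrt{q}+1}=\gamma^{q-1}=1$ gives $\delta\in Q$ and $\gamma^{\sqrt{q}}+\gamma\beta=\gamma(\delta+\beta)\in(\delta+\beta)\mathbb{F}_q^*\subseteq S$. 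Since $C_q$ is an $\mathbb{F}_{\sqrt{q}}$-subspace we have $C_q-C_q=C_q\subseteq S\cup\{0\}$, so $C_q$ is a clique; as $|C_q|=q$ matches the clique number, it is maximum.

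Next I would verify non-canonicity through directions. Under the identification $\pi$ the element $\gamma^{\sqrt{q}}+\gamma\beta$ corresponds to the point $(\gamma^{\sqrt{q}},\gamma)$, so two distinct points of $\pi(C_q)$ determine the direction $[(\gamma_1-\gamma_2)^{\sqrt{q}}:\gamma_1-\gamma_2]=[\eta^{\sqrt{q}-1}:1]$, where $\eta=\gamma_1-\gamma_2\in\mathbb{F}_q^*$. As $\eta$ ranges over $\mathbb{F}_q^*$ the quantity $\eta^{\sqrt{q}-1}$ ranges over all of $Q$, whence $|D(\pi(C_q))|=\sqrt{q}+1>1$; by Lemma~\ref{lem:correspondence} the clique $C_q$ is not canonical.

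For the intersection count, recall that the canonical cliques containing $0$ are exactly the $\mathbb{F}_q$-lines $(\delta_0+\beta)\mathbb{F}_q$ with $\delta_0\in Q$. Comparing the two descriptions of a common element, namely $\gamma^{\sqrt{q}}+\gamma\beta$ on the $C_q$ side and $(\delta_0+\beta)x=\delta_0 x+x\beta$ on the canonical side, the $\beta$-coordinate forces $x=\gamma$ and the remaining coordinate forces $\gamma^{\sqrt{q}}=\delta_0\gamma$, that is $\gamma^{\sqrt{q}-1}=\delta_0$ for $\gamma\neq 0$. The map $\gamma\mapsto\gamma^{\sqrt{q}-1}$ is an endomorphism of $\mathbb{F}_q^*$ with kernel $\mathbb{F}_{\sqrt{q}}^*$ and image $Q$; since $\delta_0\in Q$ lies in the image, the fiber over $\delta_0$ has exactly $|\mathbb{F}_{\sqrt{q}}^*|=\sqrt{q}-1$ elements, which is the desired count.

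There is no deep obstacle here; the one point deserving care is the final counting step, where one must recognize that $\gamma\mapsto\gamma^{\sqrt{q}-1}$ has image precisely $Q$ and kernel precisely $\mathbb{F}_{\sqrt{q}}^*$ (both being consequences of $\sqrt{q}-1 \mid q-1$), so that every $\delta_0\in Q$ is attained with a fiber of uniform size $\sqrt{q}-1$. The rest reduces to unwinding the parametrization and the coordinate comparison, both of which are routine.
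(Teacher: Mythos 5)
Your proposal is correct and follows essentially the same route as the paper: both rest on the parametrization $C_q=\{\gamma(\gamma^{\sqrt{q}-1}+\beta):\gamma\in\F_q\}$, with the intersection count coming from the fact that the fibers of $\gamma\mapsto\gamma^{\sqrt{q}-1}$ over $Q$ are the cosets of $\F_{\sqrt{q}}^*$ in $\F_q^*$. The only (harmless) divergence is that you certify non-canonicity by computing $|D(\pi(C_q))|=\sqrt{q}+1$ via Lemma~\ref{lem:correspondence}, whereas the paper deduces it directly from the intersection count being $\sqrt{q}-1<q-1$; your write-up is simply a more detailed version of the paper's argument.
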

\begin{proof}
    Since $C_q$ is a subspace, it is clear that $C_q$ forms a maximum clique in $X_q$. 
    It follows from $C_q = \{\gamma(\gamma^{\sqrt{q}-1}+\beta)  ~|~ \gamma \in \mathbb{F}_{q}\}$ that the intersection of any canonical clique in $X_q$ and $C_q$ has exactly $\sqrt{q}-1$ nonzero elements (these elements are given by the elements $\gamma \in \mathbb{F}_q^*$ lying in the same coset of $\mathbb{F}_{\sqrt{q}}^*$ in $\mathbb{F}_{q}^*$), which means that the clique $C_q$ is indeed non-canonical.
\end{proof}

\begin{corollary}\label{espilonCq}
    For any $i \in \{0,1,\ldots,\sqrt{q}\}$, the set $\varepsilon^iC_q$ forms a non-canonical clique in $X_q$. Moreover, for any $i,j \in \{0,1,\ldots,\sqrt{q}\}$ such that $i \ne j$, we have $\varepsilon^iC_q \cap \varepsilon^jC_q = \{0\}$.
\end{corollary}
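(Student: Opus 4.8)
The plan is to exploit the fact that multiplication by $\varepsilon^i$ is an automorphism of $X_q$, which reduces both assertions to the subspace structure of $C_q$ already established in the preceding proposition. Since $\varepsilon$ is a primitive element of $\F_q$, we have $\varepsilon^i \in \F_q^*$ for every $i$, and because the connection set $S$ is a union of $\F_q^*$-cosets we get $\varepsilon^i S = S$. Hence the $\F_q$-linear bijection $z \mapsto \varepsilon^i z$ is an automorphism of the additive group $\F_{q^2}^+$ fixing $S$ setwise, so by Lemma~\ref{lem:Cayley_iso} it is an automorphism of $X_q$ fixing the vertex $0$, and it sends the $\F_{\sqrt{q}}$-subspace $C_q$ to another $2$-dimensional $\F_{\sqrt{q}}$-subspace $\varepsilon^i C_q$.

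First I would check that $\varepsilon^i C_q$ is a non-canonical clique. The preceding proposition gives $C_q \subset S \cup \{0\}$, whence $\varepsilon^i C_q \subset \varepsilon^i(S \cup \{0\}) = S \cup \{0\}$; since $\varepsilon^i C_q$ is a subspace, any two distinct elements differ by a nonzero element of $\varepsilon^i C_q \subset S$, so $\varepsilon^i C_q$ is a clique, and it is maximum because it has $q$ elements. To see it is non-canonical, note that each canonical clique through $0$ has the form $c_j\F_q$ and is fixed setwise by multiplication by $\varepsilon^i \in \F_q^*$; thus if $\varepsilon^i C_q$ equalled some $c_j \F_q$, applying $\varepsilon^{-i}$ would force $C_q = c_j\F_q$, contradicting that $C_q$ is non-canonical.

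The main work is the disjointness statement, which I would prove exactly as in Proposition~\ref{prop:NonCanonicalCliques}. Suppose $\varepsilon^i C_q \cap \varepsilon^j C_q \neq \{0\}$ for some $0 \le i, j \le \sqrt{q}$ with $i \neq j$. Then there are $x, y \in C_q \setminus \{0\}$ with $\varepsilon^i x = \varepsilon^j y$, that is, $y = \varepsilon^{i-j}x$ with $\varepsilon^{i-j} \in \F_q^*$, so $x$ and $y$ are $\F_q$-linearly dependent. The crux is that they are nonetheless $\F_{\sqrt{q}}$-linearly independent: since $\F_{\sqrt{q}}^* = \langle \varepsilon^{\sqrt{q}+1}\rangle$ and $0 < |i-j| \le \sqrt{q} < \sqrt{q}+1$, the exponent $i-j$ is not divisible by $\sqrt{q}+1$, hence $\varepsilon^{i-j} \notin \F_{\sqrt{q}}$. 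Then $\{x, y\}$ is a basis of the $2$-dimensional $\F_{\sqrt{q}}$-space $C_q$, so $C_q = x\F_{\sqrt{q}} + y\F_{\sqrt{q}} = x(\F_{\sqrt{q}} + \varepsilon^{i-j}\F_{\sqrt{q}}) = x\F_q$, a $1$-dimensional $\F_q$-subspace. This makes $C_q$ canonical, contradicting the preceding proposition.

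I expect the only genuine subtlety to be this last dimension-collapse argument, and in particular the index bookkeeping over the range $\{0,1,\ldots,\sqrt{q}\}$ that guarantees $\varepsilon^{i-j} \notin \F_{\sqrt{q}}$; everything else is a direct transport of the subspace structure of $C_q$ by the automorphism $z \mapsto \varepsilon^i z$. One could alternatively obtain the whole statement by applying the construction in the proof of Proposition~\ref{prop:NonCanonicalCliques} verbatim with $V = C_q$, since $X_q$ is an extremal Peisert-type graph of type $(\sqrt{q}+1,q)$.
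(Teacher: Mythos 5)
Your proof is correct, and it is essentially the route the paper itself endorses: the paper's proof of this corollary opens by noting that the statement ``was already shown in the proof of Proposition~\ref{prop:NonCanonicalCliques}'', and your argument is precisely that one, applied with $V=C_q$ --- the automorphism $z\mapsto\varepsilon^i z$ transports the clique, and the disjointness follows from the dimension-collapse: $x,y$ linearly dependent over $\F_q$ but independent over $\F_{\sqrt q}$ would force $C_q=x\F_q$ to be canonical. The only difference is that the paper's displayed proof offers ``a different look'', deducing both claims from the fact that $1,\varepsilon,\dots,\varepsilon^{\sqrt q}$ represent the distinct cosets of $\F_{\sqrt q}^*$ in $\F_q^*$ together with the observation that $C_q$ meets each canonical clique through $0$ in exactly one such coset; that viewpoint is slightly more geometric but buys nothing your linear-algebra argument does not already deliver. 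Your index bookkeeping ($0<|i-j|\le\sqrt q<\sqrt q+1$, hence $\varepsilon^{i-j}\notin\F_{\sqrt q}$) is exactly the point that makes the collapse argument work, and it is handled correctly.
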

\begin{proof}
This was already shown in the proof of Proposition~\ref{prop:NonCanonicalCliques}, but let us provide a different look. It follows from three facts. First, multiplication by an element from $\mathbb{F}_q^*$ is an automorphism of $X_q$. Second, $\{1,\varepsilon, \varepsilon^2, \ldots, \varepsilon^{\sqrt{q}}\}$ is a system of representatives of the cosets of $\mathbb{F}_{\sqrt{q}}^*$ in $\mathbb{F}_{q}^*$. Third, the intersection of any canonical clique in $X_q$ and $C_q$ has exactly $\sqrt{q}-1$ nonzero elements, and these elements are given by the elements $\gamma \in \mathbb{F}_q^*$ lying in the same coset of $\mathbb{F}_{\sqrt{q}}^*$ in $\mathbb{F}_{q}^*$.
\end{proof}

Note that Delsarte cliques exist in both $X_q$ and its complement. 
In view of Corollary \ref{XqIsRank3}, any edge of $X_q$ lies in a constant number of Delsarte cliques, and any edge of the complement of $X_q$ does so as well. Thus, in view of 
Lemma \ref{WDBsrg1}(2) (resp. Lemma \ref{WDBsrg1}(1)), any induced pair of isolated cliques of size $q-\sqrt{q}$ in $X_q$ (resp. any induced complete bipartite subgraph with parts of size $\sqrt{q}+1$ in $X_q$) gives an optimal eigenfunction whose cardinality of support meets the weight-distribution bound. For the sake of completeness, we also explicitly construct optimal eigenfunctions below.

Let $C$ be a canonical clique of $X_q$ containing 0 and let $\varepsilon^i C_q$, where $i \in \{0,1,\ldots,\sqrt{q}\}$ be a non-canonical clique of $X_q$ described in Corollary \ref{espilonCq}. Let $D = C \cap \varepsilon^i C_q$. Note that $|D| = \sqrt{q}$. Define a function $f_1: \F_{q^2} \to \mathbb{R}$ by the following rule:
$$
f_1(\gamma) = 
\left\{
  \begin{array}{cc}
    1, & \gamma \in C \setminus D; \\
    -1, & \gamma \in \varepsilon^i C_q \setminus D; \\
    0, & \gamma \not \in C \cup \varepsilon^i C_q.
  \end{array}
  \right.
$$

\begin{proposition}\label{WDBrXqisTight1}
    The function $f_1$ is a $(q-\sqrt{q})$-eigenfunction of $X_q$ whose cardinality of support is $2(q-\sqrt{q})$. Thus, the weight-distribution bound is tight for the positive non-principal eigenvalue $q-\sqrt{q}-1$ of $X_q$.     

\end{proposition}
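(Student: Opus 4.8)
The plan is to verify the local eigenvalue condition~\eqref{LocalCondition} for $f_1$ directly, drawing only on the fact that $C$ and $\varepsilon^i C_q$ are Delsarte cliques sharing a common nexus. Write $T_0 = C \setminus D$ and $T_1 = \varepsilon^i C_q \setminus D$, the sets on which $f_1$ equals $1$ and $-1$ respectively (with $f_1 \equiv 0$ on $D$ and off $C \cup \varepsilon^i C_q$). Since $T_0 \cap T_1 \subseteq C \cap \varepsilon^i C_q = D$ while neither $T_0$ nor $T_1$ meets $D$, the two sets are disjoint; as $|C| = |\varepsilon^i C_q| = q$ and $|D| = \sqrt q$, each has size $q - \sqrt q$, so the support of $f_1$ has size $2(q - \sqrt q)$. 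By Lemma~\ref{SRG} the positive non-principal eigenvalue of $X_q$ is $\theta_1 = q - \sqrt q - 1$, for which the weight-distribution bound of Lemma~\ref{WDBsrg} reads $2(\theta_1 + 1) = 2(q - \sqrt q)$; hence once the eigenfunction property is confirmed, tightness is immediate.

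The crucial structural input will be that $T_0$ and $T_1$ span no edges. By Lemma~\ref{SRG} and the remark that all cliques of a fixed Peisert type share one nexus, both $C$ and $\varepsilon^i C_q$ are regular cliques of nexus $\sqrt q$, meaning every vertex outside such a clique has exactly $\sqrt q$ neighbours in it. Now I would take $x \in T_0$: since $x \in C$ but $x \notin \varepsilon^i C_q$, it has exactly $\sqrt q$ neighbours in $\varepsilon^i C_q$; on the other hand, $D \subseteq C$ and $C$ is a clique, so $x$ is adjacent to all $\sqrt q$ elements of $D \subseteq \varepsilon^i C_q$. Because $|D| = \sqrt q$ coincides with the nexus, these exhaust the neighbours of $x$ inside $\varepsilon^i C_q$, leaving none in $\varepsilon^i C_q \setminus D = T_1$. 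The symmetric argument (exchanging $C$ and $\varepsilon^i C_q$) shows no vertex of $T_1$ is adjacent to $T_0$, so $T_0 \cup T_1$ induces a pair of isolated cliques of size $q - \sqrt q = \theta_1 + 1$.

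With this established I would check~\eqref{LocalCondition} in four cases. For $\gamma \in T_0$, the clique $C$ makes $\gamma$ adjacent to the other $q - \sqrt q - 1$ vertices of $T_0$ and, by the no-edge property, to none of $T_1$, so the right-hand side equals $\theta_1 = \theta_1 f_1(\gamma)$; the case $\gamma \in T_1$ is symmetric. For $\gamma \in D$, the vertex lies in both cliques, hence is adjacent to all of $T_0$ and all of $T_1$, so the contributions $+|T_0|$ and $-|T_1|$ cancel, matching $\theta_1 f_1(\gamma) = 0$. For $\gamma \notin C \cup \varepsilon^i C_q$, the nexus gives $\gamma$ exactly $\sqrt q$ neighbours in each clique; subtracting in both the neighbours that lie in the common subset $D$ yields equally many neighbours in $T_0$ as in $T_1$, so the right-hand side again vanishes. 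Thus $f_1$ is a $\theta_1$-eigenfunction with support of size $2(q-\sqrt q) = 2(\theta_1+1)$, proving tightness. Alternatively, having exhibited $T_0 \cup T_1$ as an induced pair of isolated cliques of size $\theta_1 + 1$, one may invoke Lemma~\ref{WDBsrg1}(4), since the complement of $X_q$ has Delsarte cliques and, by the rank~$3$ property (Corollary~\ref{XqIsRank3}), every edge lies in a constant number of them. The only genuinely delicate point is the no-edge property, which rests on the numerical coincidence that the nexus $\sqrt q$ equals $|D| = \sqrt q$; everything else is routine regular-clique bookkeeping, so I expect the identity $|D| = \sqrt q$ together with the nexus value to be the load-bearing facts.
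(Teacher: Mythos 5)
Your proof is correct and follows essentially the same route as the paper: the load-bearing step in both is that the nexus of the Delsarte cliques equals $|D|=\sqrt{q}$, so $(C\setminus D)\cup(\varepsilon^i C_q\setminus D)$ induces a pair of isolated cliques of size $q-\sqrt{q}=\theta_1+1$, after which the paper simply invokes Lemma~\ref{WDBsrg1}(4) together with Corollary~\ref{XqIsRank3}, exactly as in your closing alternative. Your additional four-case verification of the local condition~\eqref{LocalCondition} is a correct (and slightly more self-contained) elaboration of what the cited lemma provides, and it also makes transparent that the eigenvalue is $q-\sqrt{q}-1$ rather than the $q-\sqrt{q}$ appearing as a typo in the proposition statement.
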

\begin{proof}
    It follows from the definition of $f$ and the fact that $C$ and $C_q$ are Delsarte (regular) cliques with nexus $\sqrt{q}$ that $(C \setminus D) \cup (C_q \setminus D)$ induce a pair of isolated cliques of size $q-\sqrt{q}$.
    In view of Lemma~\ref{WDBsrg1}(2) and Corollary \ref{XqIsRank3}, the weight-distribution bound is tight for the eigenvalue $q-\sqrt{q}-1$.
\end{proof}

Let $T_0 = Q$ and $T_1 = Q\beta$. Note that $T_0$ and $T_1$ are subsets of the lines with slopes $1$ and $\beta$ in $\AG(2,q)$. These lines do not intersect with $S$ and thus are cocliques in $X_q$, which means that $T_0$ and $T_1$ are cocliques. 

Let $\gamma_2 \in T_0$ and $\gamma_1\beta \in T_1$ be two arbitrary elements from the cocliques $T_0$ and $T_1$. Consider their difference and take into account that $Q$ is a subgroup of order $\sqrt{q}+1$ in $\mathbb{F}_q^*$ and $-Q = Q$:
$$
\gamma_2 - \gamma_1\beta = \gamma_2+\gamma_1'\beta= \gamma_1'(\gamma_2(\gamma_1')^{-1}+\beta) = \gamma_1'(\gamma_2'+\beta) \in S,
$$
where $\gamma_1',\gamma_2'$ are uniquely determined elements from $Q$. This means that $T_0 \cup T_1$ induces a complete bipartite subgraph in $X_q$ with parts $T_0$ and $T_1$ of size $\sqrt{q}+1$.

Define a function $f_2: \F_{q^2} \to \mathbb{R}$ by the following rule:
$$
f_2(\gamma) = 
\left\{
  \begin{array}{cc}
    1, & \gamma \in T_0; \\
    -1, & \gamma \in T_1; \\
    0, & \gamma \not \in T_0 \cup T_1.
  \end{array}
  \right.
$$

\begin{proposition}\label{WDBrXqisTight2}
    The function $f_2$ is a $(-\sqrt{q}-1)$-eigenfunction of $X_q$ whose cardinality of support is $2(\sqrt{q}+1)$. Thus, the weight-distribution bound is tight for the negative non-principal eigenvalue $-\sqrt{q}-1$ of $X_q$.     
\end{proposition}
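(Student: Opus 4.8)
The plan is to follow the template of Proposition~\ref{WDBrXqisTight1}: reduce the statement to the structural criterion of Lemma~\ref{WDBsrg1}(3), and then, for completeness, verify the local eigenvalue equation~\eqref{LocalCondition} by hand. First I would record that the support of $f_2$ is exactly $T_0 \cup T_1$, of size $2(\sqrt{q}+1)$. Since the negative non-principal eigenvalue is $\theta_2 = -\sqrt{q}-1$, this is precisely the value $-2\theta_2$ appearing in the weight-distribution bound (Lemma~\ref{WDBsrg}). Hence, once $f_2$ is shown to be a $(-\sqrt{q}-1)$-eigenfunction, its support automatically meets the bound and tightness follows.

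To establish that $f_2$ is a $\theta_2$-eigenfunction, I would invoke the structure already assembled before the statement: the discussion preceding the proposition shows that $T_0 \cup T_1$ induces a complete bipartite subgraph in $X_q$ with both parts of size $\sqrt{q}+1 = -\theta_2$. The graph $X_q$ has Delsarte cliques, and by Corollary~\ref{XqIsRank3} it is a rank~$3$, hence edge-transitive, graph, so every edge lies in a constant number of Delsarte cliques. Therefore Lemma~\ref{WDBsrg1}(3) applies directly and produces a $\theta_2$-eigenfunction of the form in Lemma~\ref{WDBsrg1}(1), namely $+1$ on one part, $-1$ on the other, and $0$ elsewhere; this is exactly $f_2$.

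For a self-contained verification I would instead check~\eqref{LocalCondition} with $\theta = -\sqrt{q}-1$, writing $n_0(\gamma)$ and $n_1(\gamma)$ for the number of neighbors of a vertex $\gamma$ in $T_0$ and $T_1$, so that the right-hand side of~\eqref{LocalCondition} equals $n_0(\gamma) - n_1(\gamma)$. If $\gamma \in T_0$, then $f_2(\gamma) = 1$; as $T_0$ is a coclique we get $n_0(\gamma) = 0$, and as $T_0 \cup T_1$ is complete bipartite $\gamma$ is adjacent to every vertex of $T_1$, so $n_0(\gamma) - n_1(\gamma) = -(\sqrt{q}+1) = \theta f_2(\gamma)$; the case $\gamma \in T_1$ is symmetric. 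The remaining and decisive case is $\gamma \notin T_0 \cup T_1$, where $f_2(\gamma) = 0$ forces us to prove $n_0(\gamma) = n_1(\gamma)$.

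The main obstacle is precisely this last equality for vertices outside the support. The $\F_q$-linear coordinate swap $\tau \colon x+y\beta \mapsto y+x\beta$ preserves $S$ (the membership condition $z = z_1+z_2\beta \in S$, i.e.\ $z_1,z_2 \neq 0$ and $z_1^{\sqrt{q}+1} = z_2^{\sqrt{q}+1}$, is symmetric in $z_1,z_2$), so $\tau$ is an automorphism of $X_q$ interchanging $T_0$ and $T_1$; this yields $n_0(\gamma) = n_1(\tau(\gamma))$ and settles the fixed points of $\tau$, but not a general external vertex. A uniform argument for all such $\gamma$ is exactly what the edge-regularity hypothesis of Lemma~\ref{WDBsrg1}(3) delivers; doing it by hand would instead require counting the solutions $\delta \in Q$ of $(g_1-\delta)^{\sqrt{q}+1} = g_2^{\sqrt{q}+1}$ and of $g_1^{\sqrt{q}+1} = (g_2-\delta)^{\sqrt{q}+1}$ for $\gamma = g_1 + g_2\beta$ and checking the two counts coincide. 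I would therefore discharge this step via Lemma~\ref{WDBsrg1}(3) rather than the explicit enumeration.
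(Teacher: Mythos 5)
Your proposal is correct and follows essentially the same route as the paper: both establish that $T_0\cup T_1$ induces a complete bipartite subgraph with parts of size $\sqrt{q}+1$ and then invoke Lemma~\ref{WDBsrg1}(3), whose hypotheses are met because $X_q$ is rank~3 (Corollary~\ref{XqIsRank3}) and has Delsarte cliques. The only difference is cosmetic: the paper adds, ``for completeness,'' a short geometric verification that each external vertex has equally many neighbours in $T_0$ and $T_1$ (each adjacency line through such a vertex meets $T_0$ and $T_1$ in exactly one point each), a step you explicitly and legitimately discharge via Lemma~\ref{WDBsrg1}(3) instead.
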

\begin{proof}
    Since $T_0 \cup T_1$ induces a complete bipartite subgraph with parts $T_0$ and $T_1$ of size $\sqrt{q}+1$, the result follows from Lemma~\ref{WDBsrg1}(1). For the sake of completeness, we verify that $f_2$ is indeed an eigenfunction, that is, for each vertex outside of $T_0 \cup T_1$, the number of its neighbors in $T_0$ and $T_1$ is the same. Indeed, this follows from the geometric structure of $X_q$. If a vertex $\gamma \notin T_0 \cup T_1$ is adjacent to a vertex $\gamma_1 \in T_i$, then, because of the structure of the subgraph induced by $T_0 \cup T_1$, the adjacency line connecting $\gamma$ and $\gamma_1$ intersects each of the sets $T_0$ and $T_1$ in exactly one point.  
\end{proof}

The following corollary follows from Proposition~\ref{WDBrXqisTight1} and Proposition~\ref{WDBrXqisTight2}.

\begin{corollary}\label{cor:WDB}
Let $q$ be a square. If $X$ is an extremal Peisert-type graph defined on $\F_{q^2}$, then 
the weight-distribution bound is tight for both non-principal eigenvalues of $X$.
\end{corollary}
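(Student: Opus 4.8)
The plan is to reduce the statement to a single representative graph and then invoke the two tightness results already established for that representative. First I would apply Proposition~\ref{prop:iso}, which asserts that every extremal Peisert-type graph $X$ of type $(\sqrt{q}+1,q)$ is isomorphic to $X_q$. Because a graph isomorphism is a bijection of vertex sets that preserves adjacency, it preserves the spectrum and carries any $\theta$-eigenfunction to a $\theta$-eigenfunction of the isomorphic copy while preserving the cardinality of its support. Consequently, the property ``the weight-distribution bound (Lemma~\ref{WDBsrg}) is tight for the eigenvalue $\theta$'' is an isomorphism invariant, so it suffices to verify tightness for $X_q$ alone.

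Next I would pin down the two non-principal eigenvalues of $X_q$. By Lemma~\ref{SRG} applied with $m=\sqrt{q}+1$, these are the positive eigenvalue $\theta_1=q-\sqrt{q}-1$ and the negative eigenvalue $\theta_2=-\sqrt{q}-1$, so Lemma~\ref{WDBsrg} asserts that a $\theta_1$-eigenfunction has at least $2(\theta_1+1)=2(q-\sqrt{q})$ nonzeroes and a $\theta_2$-eigenfunction has at least $-2\theta_2=2(\sqrt{q}+1)$ nonzeroes. For the positive eigenvalue I would quote Proposition~\ref{WDBrXqisTight1}: the function $f_1$, supported on the symmetric difference of a canonical clique $C$ and a non-canonical clique $\varepsilon^iC_q$ (which meet in the set $D$ of size $\sqrt{q}$), is a $\theta_1$-eigenfunction with exactly $2(q-\sqrt{q})$ nonzeroes. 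For the negative eigenvalue I would quote Proposition~\ref{WDBrXqisTight2}: the function $f_2=\mathbbm{1}_{T_0}-\mathbbm{1}_{T_1}$ associated with the cocliques $T_0=Q$ and $T_1=Q\beta$ is a $\theta_2$-eigenfunction with exactly $2(\sqrt{q}+1)$ nonzeroes. Both functions meet the respective bounds, so the weight-distribution bound is tight for both non-principal eigenvalues of $X_q$, and hence for those of $X$.

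The corollary itself is thus a short assembly step, and the genuine content lies in the two propositions it cites. If one wished to prove the tightness from scratch rather than quoting them, the main obstacle would be the construction of the two special induced subgraphs demanded by Lemma~\ref{WDBsrg1}: a pair of isolated Delsarte cliques of size $q-\sqrt{q}$ realizing $\theta_1$, and a complete bipartite subgraph with parts of size $\sqrt{q}+1$ realizing $\theta_2$. The first is obtained by removing the common part $D$ from a canonical and a non-canonical clique and using that both are regular (Delsarte) cliques with nexus $\sqrt{q}$; the second is obtained from the oval $Q$ and its image $Q\beta$, using that $Q$ is a multiplicative subgroup with $-Q=Q$ so that every difference $\gamma_2-\gamma_1\beta$ lands in $S$. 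The verification that these subgraphs indeed yield optimal eigenfunctions is licensed by Lemma~\ref{WDBsrg1}(3)--(4), whose hypothesis — that each edge of $X_q$ and of its complement lies in a constant number of Delsarte cliques — is supplied by the fact that $X_q$ is a rank $3$ graph (Corollary~\ref{XqIsRank3}).
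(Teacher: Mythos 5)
Your proposal is correct and follows the paper's own route exactly: reduce to the single representative $X_q$ via Proposition~\ref{prop:iso} and then invoke Propositions~\ref{WDBrXqisTight1} and~\ref{WDBrXqisTight2}, which is precisely how the paper assembles this corollary at the end of Section~\ref{sec:eigenfunction}. Your identification of the positive eigenvalue as $q-\sqrt{q}-1$ (with bound $2(q-\sqrt{q})$) is in fact more careful than the paper's own phrasing at the start of that section.
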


Now we are ready to prove Theorem~\ref{thm:thm2}.

\begin{proof}
When $q=4$, we have verified the statement of the theorem. In particular, it turned out that all Peisert-type graphs of type $(3,4)$ are extremal.

Next we assume that $q \geq 9$. The theorem follows immediately from Corollary~\ref{cor:numberofgraphs}, Corollary~\ref{cor:3induce}, Proposition~\ref{prop:iso}, Proposition~\ref{prop:NonCanonicalCliques}, Lemma~\ref{lem:equivalent}, Corollary~\ref{cor:noHM}, and Corollary~\ref{cor:WDB}. 
\end{proof}

\section{Implications to the study of extremal block graphs of orthogonal arrays with non-canonical cliques}\label{sec:OA}

In \cite{AGLY22}, the fact that Peisert-type graphs can be realized as block graphs of orthogonal arrays was explored, and it was shown that the definitions of canonical cliques previously given for Peisert-type graphs and block graphs of orthogonal arrays agree with each other. In this paper we defined extremal Peisert-type graphs having non-canonical cliques. In fact, this definition can be naturally extended to the class of block graphs of orthogonal arrays obtained from affine planes different from $\AG(2,q)$ and having non-canonical cliques (for the background on the block graphs of orthogonal arrays, see \cite[Section 5.5]{GM15}).
It is well known (for example, see \cite[Theorem 10.4.1]{GR01}) that an orthogonal array $OA(m,n)$ is equivalent to $m-2$ mutually orthogonal Latin squares. Further, it is well known (for example, see \cite[Theorem 5.1.2]{KD15}) that no more than $n-1$ mutually orthogonal Latin squares of order
$n$ exist. This implies that for an orthogonal array $OA(m,n)$, we necessarily have $m \le n + 1$. A set of $n-1$ mutually orthogonal Latin squares (an orthogonal array $OA(n+1,n)$) is called \emph{complete} (see \cite[p. 161]{KD15}). It is well known (for example, see \cite[Theorem 5.2.2, Theorem 8.1.1]{KD15}) that the existence of a complete set of mutually orthogonal Latin squares of order $n$ (equivalently, a complete orthogonal array $OA(n+1,n)$) is equivalent to the existence of a projective plane of order $n$, whose existence is known to be equivalent to the existence of an affine plane of order $n$. Let $A$ be a complete orthogonal array of type $OA(n+1,n)$ and let $A_1$ be an orthogonal array $OA(m,n)$ obtained as a subset of rows of $A$. 
For such an orthogonal array $A_1$ whose block graph has non-canonical cliques, we say that the block graph is $A$-\emph{extremal} if the block graphs of all orthogonal arrays of type $OA(m-1,n)$ obtained as subsets of rows of $A$ have the strict-EKR property. In this sense, extremal Peisert-type graphs considered in this paper are $\AG(2,q)$-extremal.  

In this section we discuss some implications to the study of extremal block graphs of orthogonal arrays with non-canonical cliques. In \cite[Corollary 5.5.3]{GM15}, it was shown that if $OA(m, n)$ is an orthogonal array with $n > (m - 1)^2$, then
the only cliques of size $n$ in the corresponding block graph are canonical cliques. Moreover, some non-canonical cliques in the block graphs of orthogonal arrays of type $(m,(m-1)^2)$ (the smallest case when non-canonical cliques are possible according to the bound) were discussed in \cite[Section 5.5, p. 98]{GM15}. In particular, it was noted that ``In general, if the array $OA(m, n)$ has a subarray with $m$ rows that is an $OA(m,m - 1)$, then the columns of this subarray form a clique in $X_{OA(m,n)}$
of size $(m - 1)^2$.'' Thus, the following problem is of interest.
\begin{problem}[{\cite[Problem 16.4.2]{GM15}}]\label{ProblemOA}
Assume that $OA(m, (m - 1)^2)$ is an orthogonal array and
its block graph has non-canonical cliques of size $(m - 1)^2$. Do
these non-canonical cliques form subarrays that are isomorphic to orthogonal
arrays with entries from $\{1, \ldots, m - 1\}$?
\end{problem}

We have shown that the graphs $X_q$ are the only Peisert-type graphs that correspond to the equality case in the bound above and have non-canonical cliques. In view of Problem \ref{ProblemOA}, it is interesting to ask whether the non-canonical cliques in $X_q$ correspond to a subarray $OA(\sqrt{q}+1,\sqrt{q})$ in the orthogonal array $OA(\sqrt{q}+1,q)$ whose block graph is $X_q$. Indeed, the answer is positive. 

\begin{lemma}\label{InteresectionCanonicalNoncanonical}
Let $C_1$ be a canonical clique in $X_q$, and $C_2$ be a maximum non-canonical clique in $X_q$ such that $C_1 \cap C_2 \ne \emptyset$. Then $|C_1 \cap C_2| = \sqrt{q}$.
\end{lemma}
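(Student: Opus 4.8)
The plan is to reduce the statement to the intersection count already computed for the distinguished non-canonical clique $C_q$ in Section~\ref{sec:eigenfunction}, by exploiting two families of automorphisms of $X_q$: the translations and the multiplications by elements of $\mathbb{F}_q^*$.

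First I would note that both $C_1$ and $C_2$ are maximum cliques of size $q$: canonical cliques have size $q$ by definition, and $C_2$ is maximum by hypothesis. Since $C_1\cap C_2\neq\emptyset$, I would pick $w\in C_1\cap C_2$ and translate by $-w$. Translation by any vector is an automorphism of the Cayley graph $X_q$, it sends canonical cliques to canonical cliques and non-canonical cliques to non-canonical cliques, and it preserves $|C_1\cap C_2|$; hence we may assume $0\in C_1\cap C_2$. Now $C_1$ is a canonical clique containing $0$, so $C_1=(\delta_0+\beta)\mathbb{F}_q$ for some $\delta_0\in Q$ (a one-dimensional $\mathbb{F}_q$-subspace), while $C_2$ is a non-canonical clique containing $0$.

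Next I would invoke the classification of non-canonical cliques through $0$: by Proposition~\ref{prop:NonCanonicalCliques} and Corollary~\ref{espilonCq}, these are exactly $\varepsilon^{0}C_q,\varepsilon^{1}C_q,\ldots,\varepsilon^{\sqrt{q}}C_q$, so $C_2=\varepsilon^{i}C_q$ for some $i$. The crucial observation is that multiplication by $\varepsilon^{-i}$ is an automorphism of $X_q$ (because $\varepsilon^{-i}\in\mathbb{F}_q^*$ and $S$ is a union of $\mathbb{F}_q^*$-cosets) which simultaneously fixes $C_1$ and normalizes $C_2$: since $\varepsilon^{-i}\in\mathbb{F}_q^*$ stabilizes every $\mathbb{F}_q$-line through the origin, we have $\varepsilon^{-i}C_1=C_1$, whereas $\varepsilon^{-i}C_2=C_q$. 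Therefore
$$
|C_1\cap C_2|=\bigl|(\varepsilon^{-i}C_1)\cap(\varepsilon^{-i}C_2)\bigr|=|C_1\cap C_q|.
$$
Finally, the computation carried out for $C_q$ shows that every canonical clique of $X_q$ containing $0$ meets $C_q$ in exactly $\sqrt{q}-1$ nonzero elements (the $\gamma\in\mathbb{F}_q^*$ lying in a fixed coset of $\mathbb{F}_{\sqrt{q}}^*$, in the parametrization $C_q=\{\gamma(\gamma^{\sqrt{q}-1}+\beta):\gamma\in\mathbb{F}_q\}$); together with the common point $0$ this gives $|C_1\cap C_q|=\sqrt{q}$, and hence $|C_1\cap C_2|=\sqrt{q}$.

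I do not expect a serious obstacle, since all the quantitative content is packaged in the already-proven statement about $C_q$; the only point needing care is verifying that scaling by $\varepsilon^{-i}$ genuinely fixes $C_1$, i.e.\ that each element of $\mathbb{F}_q^*$ stabilizes every $\mathbb{F}_q$-line through the origin. If one prefers a self-contained argument avoiding the classification, one can instead observe that after translating $0$ into the intersection both $C_1$ and $C_2$ are two-dimensional $\mathbb{F}_{\sqrt{q}}$-subspaces of the four-dimensional $\mathbb{F}_{\sqrt{q}}$-space $\mathbb{F}_{q^2}$, so that $C_1\neq C_2$ forces $|C_1\cap C_2|\leq\sqrt{q}$; the harder half is then to show the intersection is nonzero, which is precisely the uniform-distribution fact that the $q-1$ nonzero points of $C_2$ are spread evenly over the $\sqrt{q}+1$ canonical cliques through $0$.
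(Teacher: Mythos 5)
Your proof is correct and follows essentially the same route as the paper: translate a common point to the origin, then invoke the classification of non-canonical cliques through $0$ (Proposition~\ref{prop:NonCanonicalCliques} and Corollary~\ref{espilonCq}) together with the fact that $C_q$ meets each canonical clique through $0$ in exactly $\sqrt{q}-1$ nonzero points. The paper leaves the reduction from $\varepsilon^{i}C_q$ to $C_q$ implicit, whereas you make the scaling-by-$\varepsilon^{-i}$ step explicit, but the argument is the same.
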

\begin{proof}
Let $\gamma \in C_1 \cap C_2$. Then $-\gamma+C_1$ is a canonical clique containing $0$, and $-\gamma+C_2$ a maximum non-canonical clique containing $0$. In view of Proposition \ref{prop:NonCanonicalCliques} and the structure of non-canonical cliques containing 0, we conclude that $|(-\gamma+C_1) \cap (-\gamma+C_2)| = \sqrt{q}$, which implies $|C_1 \cap C_2| = \sqrt{q}$. 
\end{proof}

Let $\Pi$ be a finite projective (respectively, an affine) plane of order $n$ and $\Pi_0$ a projective (respectively, an affine) subplane of $\Pi$ of order $n_0$ different from $\Pi$; then $n_0 \le \sqrt{n}$. If $n_0 = \sqrt{n}$, then $\Pi_0$ is called a \emph{Baer subplane} of $\Pi$. Thus, Baer subplanes are the ``biggest'' possible proper subplanes of finite planes (see \cite{C72}).

\begin{proposition}\label{prop:Baer}
The non-canonical cliques in $X_q$ correspond to orthogonal subarrays $OA(\sqrt{q}+1,\sqrt{q})$, which are Baer subplanes in $\AG(2,q)$.
\end{proposition}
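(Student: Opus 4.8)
The plan is to work directly with the orthogonal-array realization of $X_q$ recalled at the start of Section~\ref{sec:OA}. There the rows of the array $OA(\sqrt{q}+1,q)$ are indexed by the $\sqrt{q}+1$ directions (parallel classes of lines) determined by $X_q$, the columns by the points of $\AG(2,q)$, and the symbol of a point in a given row records which line of that parallel class contains it; the canonical cliques are precisely the sets of $q$ columns sharing a common symbol in a fixed row. Fix a non-canonical clique $C$, so that $|C|=q$ and, by Proposition~\ref{prop:NonCanonicalCliques} and its proof, $C$ is a $2$-dimensional $\F_{\sqrt{q}}$-subspace. Restricting the array to the $q$ columns of $C$ yields a $(\sqrt{q}+1)\times q$ subarray, and the goal is to show that this subarray is an $OA(\sqrt{q}+1,\sqrt{q})$ and that the incidence structure it encodes is a Baer subplane of $\AG(2,q)$.

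First I would count the symbols row by row. Fix one of the $\sqrt{q}+1$ directions. Any line of its parallel class that meets $C$ is a canonical clique meeting the non-canonical clique $C$, so Lemma~\ref{InteresectionCanonicalNoncanonical} shows it meets $C$ in exactly $\sqrt{q}$ points. Consequently this parallel class partitions the $q$ points of $C$ into blocks of size $\sqrt{q}$, whence exactly $q/\sqrt{q}=\sqrt{q}$ of its lines meet $C$; equivalently, the corresponding row of the subarray uses exactly $\sqrt{q}$ distinct symbols. This is the essential numerical input, and the only step where the geometry of $C$ really enters.

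Next I would verify the defining orthogonality property. Given two distinct directions $i\neq j$, consider the map sending a point of $C$ to its pair of symbols in rows $i$ and $j$. Two points with the same pair would lie on a common line in direction $i$ and on a common line in direction $j$; since lines from different parallel classes meet in at most one point, the two points coincide, so the map is injective. As its target has size $\sqrt{q}\cdot\sqrt{q}=q=|C|$, the map is in fact a bijection, so every ordered pair of symbols occurs exactly once. Hence the subarray is an $OA(\sqrt{q}+1,\sqrt{q})$ of index $1$.

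Finally, since $\sqrt{q}+1$ is the maximal possible number of rows for an array on $\sqrt{q}$ symbols, this subarray is a \emph{complete} orthogonal array, which, as recalled in Section~\ref{sec:OA}, is equivalent to an affine plane of order $\sqrt{q}$. Its points are the $q$ points of $C$ and its lines are exactly the $\sqrt{q}$-point intersections $C\cap\ell$ with lines $\ell$ of $\AG(2,q)$, so collinearity is inherited from the ambient plane and $C$ carries a genuine affine subplane of order $\sqrt{q}$ inside $\AG(2,q)$. Because $\AG(2,q)$ has order $q$ and $\sqrt{q}$ is the largest possible order of a proper subplane, this subplane is by definition Baer, which completes the argument. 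I expect the only delicate point to be confirming that the abstract plane produced by the complete subarray coincides with the geometric structure that the lines of $\AG(2,q)$ induce on $C$; once the per-row count from Lemma~\ref{InteresectionCanonicalNoncanonical} is secured, the remaining identifications are routine.
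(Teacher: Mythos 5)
Your proof is correct and follows essentially the same route as the paper's: the key input in both is Lemma~\ref{InteresectionCanonicalNoncanonical}, which forces each of the $\sqrt{q}+1$ special parallel classes to partition the $q$-point clique into exactly $\sqrt{q}$ lines of $\sqrt{q}$ points each. The only differences are cosmetic — the paper first reduces to the single clique $C_q$ via the automorphism group and appeals to the point-line incidence realization for the rest, whereas you work with an arbitrary non-canonical clique and spell out the index-one orthogonality check and the identification with a Baer subplane explicitly; both of those verifications are sound.
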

\begin{proof}
Let $r = \sqrt{q}$.
The canonical cliques in $X_q$ are naturally divided into $r+1$ parallel classes of size $q$.

In view of Proposition \ref{prop:NonCanonicalCliques}, all non-canonical cliques in $X_q$ are equivalent under the automorphism group of $X_q$. Thus, it suffices to show that the non-canonical clique $C_q$ defined in Section~\ref{sec:eigenfunction} has a subarray structure. For this, in view of Lemma \ref{InteresectionCanonicalNoncanonical}, it suffices to show that for every parallel class of canonical cliques, there are exactly $r$ lines intersecting $C_q$.

Let $C$ be a canonical clique containing $0$. Put $C' = C_q \cap C$. The result follows from the realization of Peisert-type graphs as point-line incidence orthogonal array (see, for example, \cite{AGLY22}) and the fact that $(\gamma_i+C)\cap C_q= \gamma_i + C'$.
\end{proof}

The main implication of Proposition~\ref{prop:Baer} is that the block graphs of orthogonal arrays obtained from the affine planes $\AG(2,q)$ do not give a negative answer for Problem \ref{ProblemOA}. The following problem naturally arises.
\begin{problem}\label{OtherPlanesProblem1}
Let $A$ be an affine plane of order $q$ such that $q$ is the square of a prime power and $A$ is not isomorphic to the affine plane $\AG(2,q)$. Do the block graphs of the orthogonal arrays obtained from $\sqrt{q}+1$ parallel classes of $A$ have non-canonical cliques without a subarray structure?
\end{problem}
In general, the following problem is of interest.
\begin{problem}\label{OtherPlanesProblem2}
Let $A$ be an affine plane of order $q$ such that $A$ is not isomorphic to an affine plane $\AG(2,q)$. What are $A$-extremal block graphs of orthogonal arrays?    
\end{problem}
Note that Problems \ref{OtherPlanesProblem1} and \ref{OtherPlanesProblem2} are special cases of the most general problem (see \cite[Section 16.4]{GM15}) of determination all the maximum cliques in the
block graph of an orthogonal array (we focus on the orthogonal arrays that can be extended to a complete orthogonal array; a criterion of extendibility is given in \cite[Theorem 10.4.5]{GR01}).

\section*{Acknowledgments}
The authors thank Andries E. Brouwer and Akihiro Munemasa for helpful discussions. The authors are also grateful to the referees for valuable corrections and suggestions.

\appendix

\section{Computational results and a further problem}\label{sec:computation}
In this section, we provide computational results on the total number of pairwise non-isomorphic Peisert-type graphs of order $q^2$ with or without the strict-EKR property for small values of $q$.

It follows from Corollary \ref{cor:3iso} that, for any prime power $q$, all Peisert-type graphs of type $(3,q)$ are isomorphic.
Since the complement of a Peisert-type graph of type $(m,q)$ is a Peisert-type graph of type $(q+1-m,q)$, we conclude that the number of pairwise non-isomorphic Peisert-type graphs of type $(m,q)$ is equal to the number of pairwise non-isomorphic Peisert-type graphs of type $(q+1-m,q)$.
Whenever $q \leq 5$, there exists a unique Peisert-type graph of type $(m,q)$ for any admissible value of $m$, that is, $1 \le m \le q$. Given a prime power $q$, let $e_q$ denote the smallest value of $m$ such that a Peisert-type graph without the strict-EKR property and of type $(m,q)$ exists. We call this value of $e_q$ \emph{extremal}. In this paper, we have determined the value of $e_q$ for any prime power $q$. 

Let us describe an algorithm for the enumeration of Peisert-type graphs of small order. Input: a prime power $q \ge 5$ and a number $3 \le m \le q-2$; output: the number of (pairwise non-isomorphic) Peisert-type graphs of type $(m,q)$ with strict-EKR property and the number of (pairwise non-isomorphic) Peisert-type graphs of type $(m,q)$ without strict-EKR property. The affine plane $\operatorname{AG}(2,q)$ has exactly $q+1$ directions; let them be indexed by the integers $1,2,\ldots, q+1$. 
Without loss of generality, we may assume that the directions with indices $q-1,q,q+1$ are in the direction set determined by the graph. In other words, to enumerate all Peisert-type graphs of type $(m,q)$, we enumerate all Peisert-type graphs of type $(m-3,q)$, with direction set not having the directions $q-1,q,q+1$ and then add the directions $q-1,q,q+1$ to the direction set to get a complete list of Peisert-type graphs of type $(m,q)$. Thus, we need to enumerate all $(m-3)$-subsets of the set $\{1,2,\ldots,q-2\}$, and we do this using the algorithm from \cite[p. 111]{B10}.
For a new Peisert-type graph $X$ of type $(m,q)$ (that is, a graph not isomorphic to every graph from a current list of pairwise non-isomorphic Peisert-type graphs), we consider the subgraph $X_0$ induced by the neighborhood of zero vertex and enumerate all its maximum cliques of size $q-1$. If $X_0$ has exactly $m$ cliques of size $q-1$, we conclude that $X$ has the strict-EKR property; otherwise, we conclude that $X$ does not have the strict-EKR property.

As an illustration, the following MAGMA \cite{BCP97} code implements the algorithm in the case $q = 17$ and $m = 6$.

\begin{lstlisting}
NextSubset:=function(CurSubset,n,r)
    if CurSubset eq [-1] then
        return [1..r];
    end if;
    X:=CurSubset;
    for k := r to 1 by -1 do
        Exclude(~X,CurSubset[k]);
        if CurSubset[k]+1 le n and CurSubset[k]+1 notin CurSubset then     
            return X cat [CurSubset[k]+1..CurSubset[k]+r-k+1];        
        end if;
    end for;    
end function;
q:=17;
m:=6;

Fq:=FiniteField(q);
FqSet:=Set(Fq);
Sq:={@i^2 : i in Fq@};
Nq:= SetToIndexedSet(Set(Fq) diff Sq);
d:=Nq[1];
R<x>:=PolynomialRing(Fq);
f := elt< R | -d, 0, 1 >;
flagIr:=IsIrreducible(f);
if flagIr then
    Fqs:={@elt<R|x,y> : x,y in Fq@};
    e0:=elt<R|1,0>;    
    Slopes:={@ e0 @};
    for el in Fq do
        Slopes:=Slopes join {elt<R|el,1>};    
    end for;            
    GrList:=[];
    cntStrictEKR:=0;
    cntWithoutStrictEKR:=0;    
    X:=[-1];
    cntSubsets:=0;
    endBound:=Binomial(q-2,m-3);
    while X ne [q-2-(m - 3)+1..q-2] do
        X:=NextSubset(X,q-2,m-3);
        cntSubsets+:=1;
        if cntSubsets mod 1000 eq 1 then
            printf"cntSubsets = %o to %o\n", cntSubsets, endBound; 
        end if;
        S:={@@};
        ss1:={q-1,q,q+1} join SequenceToSet(X);
        for ind in ss1 do
            slope:=Slopes[ind];
            S:=S join {c*slope : c in Fq | c ne 0};
        end for;
        sz:=q^2;
        A:=ZeroMatrix(IntegerRing(),sz,sz);
        for i in [1..sz-1] do
            for j in [i+1..sz] do
                if Fqs[i] - Fqs[j] in S then
                    A[i,j]:=1;
                    A[j,i]:=1;
                end if;
            end for;
        end for;
        Gr:=Graph<sz|A>;
        flagIsNewGraph:=true;
        for ind in [1..#GrList] do
            if IsIsomorphic(GrList[ind], Gr) then
                flagIsNewGraph:=false;
                break;
            end if;
        end for;
        if flagIsNewGraph then 
            GrList[#GrList+1]:=Gr;          
            flagIsStrictEKR:=true;            
            sz1:=#S;
            B:=ZeroMatrix(IntegerRing(),sz1,sz1);
            for i in [1..sz1-1] do 
                for j in [i+1..sz1] do 
                    if S[i] - S[j] in S then
                        B[i,j]:=1;
                        B[j,i]:=1;
                    end if;
                end for;
            end for;
            Gr1:=Graph<sz1|B>;
            CC:=AllCliques(Gr1,q-1 : Limit:=m+1);
            if #CC eq m+1 then
                flagIsStrictEKR:=false;
            end if;
            if flagIsStrictEKR then
                cntStrictEKR+:=1;
            else
                cntWithoutStrictEKR+:=1;
            end if;
            printf"GrNum = %o cntStrictEKR = %o cntWithout = %o\n", 
            #GrList, cntStrictEKR, cntWithoutStrictEKR; 
        end if;  
    end while;
    printf"cntStrictEKR = %o cntWithout = %o\n", 
    cntStrictEKR, cntWithoutStrictEKR;
end if;
\end{lstlisting}

The following tables, for small values of $q$, give computational information on how many Peisert-type graphs with/without the strict-EKR property there exist. In particular, the value of $e_q$ and the uniqueness of an extremal graph can be seen from the tables, and this agrees with the main results obtained in this paper. The most difficult case to compute was $q = 27$ and $m = \frac{q+1}{2} = 14$, and it took about six weeks on an ordinary laptop.

$$q = 4: \,\,\,\,\,\,\,\,\,\,\,\,\,\,\,\,\,\,\,\,\,\,\,\,\,\,\,\,\,\,\,\,\,\,\,\,\,\,\,q = 5:$$
\begin{center}
~~
\begin{tabular}{|c|c|c|c|}
  \hline
  $m$ & 3 & 4\\
  \hline
  \text{\#Graphs} & 1 & 1 \\
  \hline
  \text{strict-EKR} & - & - \\
  \hline
  \text{without} & 1 & 1 \\
  \hline
\end{tabular}\,\,\,\,\,\,
\begin{tabular}{|c|c|c|c|}
  \hline
  $m$ & 3 & 4 & 5\\
  \hline
  \text{\#Graphs} & 1 & 1 & 1\\
  \hline
  \text{strict-EKR} & 1 & - & -\\
  \hline
  \text{without} & - & 1 & 1\\
  \hline
\end{tabular}
\end{center}

$$q = 7:\,\,\,\,\,\,\,\,\,\,\,\,\,\,\,\,\,\,\,\,\,\,\,\,\,\,\,\,\,\,\,\,\,\,\,\,\,\,\,\,\,\,\,\,\,\,\,\,\,\,\,\,\,q=8:$$
\begin{center}
\begin{tabular}{|c|c|c|c|c|}
  \hline
  $m$ & 3 & 4 & 5 & 6\\
  \hline
  \text{\#Graphs} & 1 & 2 & 1 & 1\\
  \hline
  \text{strict-EKR} & 1 & 2 & - & -\\
  \hline
  \text{without} & - & - & 1 & 1\\
  \hline
\end{tabular}
\,\,\,\,\,\,
\begin{tabular}{|c|c|c|c|c|c|}
  \hline
  $m$ & 3 & 4 & 5 & 6\\
  \hline
  \text{\#Graphs} & 1 & 1 & 1 & 1\\
  \hline
  \text{strict-EKR} & 1 & 1 & - & -\\
  \hline
  \text{without} & - & - & 1 & 1 \\
  \hline
\end{tabular}
\end{center}

$$q=9: \,\,\,\,\,\,\,\,\,\,\,\,\,\,\,\,\,\,\,\,\,\,\,\,\,\,\,\,\,\,\,\,\,\,\,\,\,\,\,\,\,\,\,\,\,\,\,\,\,\,\,\,\,\,\,\,\,\,\,\,\,\,\,\,\,\,\,\,\,\,\,q=11:$$
\begin{center}
\begin{tabular}{|c|c|c|c|c|c|c|}
  \hline
  $m$ & 3 & 4 & 5 & 6 & 7\\
  \hline
  \text{\#Graphs} & 1 & 2 & 2 & 2 & 1\\
  \hline
  \text{strict-EKR} & 1 & 1 & 1 & - & -\\
  \hline
  \text{without} & - & 1 & 1 & 2 & 1\\
  \hline
\end{tabular} 
\,\,\,\,\,\,
\begin{tabular}{|c|c|c|c|c|c|c|c|c|}
  \hline
  $m$ & 3 & 4 & 5 & 6 & 7 & 8 & 9\\
  \hline
  \text{\#Graphs} & 1 & 2 & 2 & 4 & 2 & 2 & 1\\
  \hline
  \text{strict-EKR} & 1 & 2 & 2 & 4 & 1 & 1 & -\\
  \hline
  \text{without} & - & - & - & - & 1 & 1 & 1\\
  \hline
\end{tabular} 
\end{center}

$$q=13:$$
\begin{center}
\begin{tabular}{|c|c|c|c|c|c|c|c|c|c|c|}
  \hline
  $m$ & 3 & 4 & 5 & 6 & 7 & 8 & 9 & 10 & 11\\
  \hline
  \text{\#Graphs} & 1 & 3 & 3 & 5 & 5 & 5 & 3 & 3 & 1\\
  \hline
  \text{strict-EKR} & 1 & 3 & 3 & 5 & 5 & 4 & 2 & - & -\\
  \hline
  \text{without} & - & - & - & - & - & 1 & 1 & 3 & 1\\
  \hline
\end{tabular}

$$q=16:$$
\begin{tabular}{|c|c|c|c|c|c|c|c|c|c|c|c|c|c|}
  \hline
  $m$ & 3 & 4 & 5 & 6 & 7 & 8 & 9 & 10 & 11 & 12 & 13 & 14\\
  \hline
  \text{\#Graphs} & 1 & 2 & 3 & 4 & 5 & 6 & 6 & 5 & 4 & 3 & 2 & 1\\
  \hline
  \text{strict-EKR} & 1 & 2 & 2 & 3 & 3 & 3 & 1 & - & - & - & - & - \\
  \hline
  \text{without} & - & - & 1 & 1 & 2 & 3 & 5 & 5 & 4 & 3 & 2 & 1\\
  \hline
\end{tabular} 
\end{center}

$$q=17:$$
\begin{center}
\begin{tabular}{|c|c|c|c|c|c|c|c|c|c|c|c|c|c|c|}
  \hline
  $m$ & 3 & 4 & 5 & 6 & 7 & 8 & 9 & 10 & 11 & 12 & 13 & 14 & 15\\
  \hline
  \text{\#Graphs} & 1 & 3 & 4 & 10 & 10 & 17 & 17 & 17 & 10 & 10 & 4 & 3 & 1\\
  \hline
  \text{strict-EKR} & 1 & 3 & 4 & 10 & 10 & 17 & 17 & 16 & 9 & 6 & 1 & - & -\\
  \hline
  \text{without} & - & - & - & - & - & - & - & 1 & 1 & 4 & 3 & 3 & 1\\
  \hline
\end{tabular} 
\end{center}

$$q=19:$$
\begin{center}
\begin{tabular}{|c|c|c|c|c|c|c|c|c|c|c|c|c|c|c|c|c|}
  \hline
  $m$ & 3 & 4 & 5 & 6 & 7 & 8 & 9 & 10 & 11 & 12 & 13 & 14 & 15 & 16 & 17 \\
  \hline
  \text{\#Graphs} & 1 & 4 & 5 & 13 & 18 & 31 & 33 & 44 & 33 & 31 & 18 & 13 & 5 & 4 & 1\\
  \hline
  \text{strict-EKR} & 1 & 4 & 5 & 13 & 18 & 31 & 33 & 44 & 32 & 30 & 14 & 5 & - & - & -\\
  \hline
  \text{without} & - & - & - & - & - & - & - & - & 1 & 1 & 4 & 8 & 5 & 4 & 1\\
  \hline
\end{tabular}
\end{center}

$$q=23:$$
\begin{center}
\begin{tabular}{|c|c|c|c|c|c|c|c|c|c|c|c|}
  \hline
  $m$ & 3 & 4 & 5 & 6 & 7 & 8 & 9 & 10 & 11 & 12 & 13\\
  \hline
  \text{\#Graphs} & 1 & 4 & 6 & 22 & 36 & 83 & 125 & 196 & 227 & 268 & 227\\
  \hline
  \text{strict-EKR} & 1 & 4 & 6 & 22 & 36 & 83 & 125 & 196 & 227 & 268 & 226\\
  \hline
  \text{without} & - & - & - & - & - & - & - & - & - & - & 1\\
  \hline
\end{tabular}
\\
\begin{tabular}{|c|c|c|c|c|c|c|c|c|}
  \hline
  $m$ & 14 & 15 & 16 & 17 & 18 & 19 & 20 & 21\\
  \hline
  \text{\#Graphs} & 196 & 125 & 83 & 36 & 22 & 6 & 4 & 1 \\
  \hline
  \text{strict-EKR} & 195 & 120 & 73 & 19 & 5 & - & - & - \\
  \hline
  \text{without} & 1 & 5 & 10 & 17 & 17 & 6 & 4 & 1 \\
  \hline
\end{tabular} 
\end{center}

$$q=25:$$
\begin{center}
\begin{tabular}{|c|c|c|c|c|c|c|c|c|c|c|c|}
  \hline
  $m$ & 3 & 4 & 5 & 6 & 7 & 8 & 9 & 10 & 11 & 12 & 13\\
  \hline
  \text{\#Graphs} & 1 & 4 & 7 & 19 & 34 & 79 & 132 & 223 & 293 & 379 & 391\\
  \hline
  \text{strict-EKR} & 1 & 4 & 7 & 18 & 33 & 75 & 121 & 185 & 208 & 198 & 108\\
  \hline
  \text{without} & - & - & - & 1 & 1 & 4 & 11 & 38 & 85 & 181 & 283\\
  \hline
\end{tabular}
\\
\begin{tabular}{|c|c|c|c|c|c|c|c|c|c|c|}
  \hline
  $m$ & 14 & 15 & 16 & 17 & 18 & 19 & 20 & 21 & 22 & 23\\
  \hline
  \text{\#Graphs} & 379 & 293 & 223 & 132 & 79 & 34 & 19 & 7 & 4 & 1\\
  \hline
  \text{strict-EKR} & 34 & 3 & 1 & - & - & - & - & - & - & -\\
  \hline
  \text{without} & 345 & 290 & 222 & 132 & 79 & 34 & 19 & 7 & 4 & 1\\
  \hline
\end{tabular} 
\end{center}

$$q=27:$$
\begin{center}
\begin{tabular}{|c|c|c|c|c|c|c|c|c|c|c|c|c|}
  \hline
  $m$ & 3 & 4 & 5 & 6 & 7 & 8 & 9 & 10 & 11 & 12 & 13 & 14\\
  \hline
  \text{\#Graphs} & 1 & 3 & 4 & 14 & 29 & 72 & 134 & 257 & 390 & 565 & 670 & 738\\
  \hline
  \text{strict-EKR} & 1 & 3 & 4 & 14 & 29 & 72 & 134 & 256 & 389 & 560 & 651 & 670\\
  \hline
  \text{without} & - & - & - & - & - & - & - & 1 & 1 & 5 & 19 & 68\\
  \hline
\end{tabular}
\\
\begin{tabular}{|c|c|c|c|c|c|c|c|c|c|c|c|}
  \hline
  $m$ & 15 & 16 & 17 & 18 & 19 & 20 & 21 & 22 & 23 & 24 & 25\\
  \hline
  \text{\#Graphs} & 670 & 565 & 390 & 257 & 134 & 72 & 29 & 14 & 4 & 3 & 1\\
  \hline
  \text{strict-EKR} & 513 & 265 & 57 & 4 & - & - & - & - & - & - & -\\
  \hline
  \text{without} & 157 & 300 & 333 & 253 & 134 & 72 & 29 & 14 & 4 & 3 & 1\\
  \hline
\end{tabular} 
\end{center}

Given a prime power $q$, let $E_q$ denote the largest value of $m$ such that a Peisert-type graph with the strict-EKR property and of type $(m,q)$ exists. We also call this value of $E_q$ and Peisert-type graphs with the strict-EKR property, of type $(E_q,q)$ \emph{extremal}. It follows from the above tables that $E_4 = 2, E_5 = 3, E_7 = 4, E_8 = 4, E_9 = 5, E_{11} = 8, E_{13} = 9, E_{16} = 9, E_{17} = 13, E_{19} = 14, E_{23} = 18$, $E_{25} = 16$ and $E_{27} = 18$.
\begin{problem}
For each prime power $q$, determine the value of $E_q$ and the number of pairwise non-isomorphic extremal Peisert-type graphs with the strict-EKR property and of type $(E_q,q)$.
\end{problem}

\end{document}